\newtheorem{theorem}{Theorem}[section]
\newtheorem{definition}{Definition}[section]
\newtheorem{lemma}{Lemma}[section]
\newtheorem{remark}{Remark}[section]
\newtheorem{proposition}{Proposition}[section]
\newtheorem{corollary}{Corollary}[section]
\numberwithin{equation}{section}
\newdimen\bibspace
\renewenvironment{thebibliography}[1]{%
 \section*{\refname 
       \@mkboth{\MakeUppercase\refname}{\MakeUppercase\refname}}%
     \list{\@biblabel{\@arabic\c@enumiv}}%
          {\settowidth\labelwidth{\@biblabel{#1}}%
           \leftmargin\labelwidth
           \advance\leftmargin\labelsep
           \itemsep\bibspace
           \parsep\z@skip     %
           \@openbib@code
           \usecounter{enumiv}%
           \let\p@enumiv\@empty
           \renewcommand\theenumiv{\@arabic\c@enumiv}}%
     \sloppy\clubpenalty4000\widowpenalty4000%
     \sfcode`\.\@m}
    {\def\@noitemerr
      {\@latex@warning{Empty `thebibliography' environment}}%
     \endlist}
           \newcommand{\ud}{\mathrm{d}}
\newcommand{\be}{\begin{equation}}      \newcommand{\ee}{\end{equation}}
\newcommand{\alp}{\ensuremath{\alpha}}
\newcommand{\bt}{\ensuremath{\beta}}
\newcommand{\wdt}{\ensuremath{\widetilde}}
\begin{document}

\title{On a Fractional Nirenberg problem involving the square root
of the Laplacian on $\mathbb{S}^{3}$}

\author{ {\sc Yan Li}\,, {\sc Zhongwei Tang}\thanks{The research was supported by National Science Foundation of China(12071036,12126306)}\, and {\sc Ning Zhou} \\
\small School of Mathematical Sciences, \\
\small Laboratory of Mathematics and Complex Systems, MOE,\\
\small Beijing Normal University, Beijing, 100875, P.R. of China}

\date{}

\maketitle

\begin{abstract}
In  this paper, we are devoted to establishing the compactness and existence results of the solutions to the fractional
Nirenberg problem  for $n=3,$ $\sigma=1/2,$ when the prescribing $\sigma$-curvature function satisfies the $(n-2\sigma)$-flatness condition near its critical points. The compactness results are new and optimal. In addition, we obtain a degree-counting formula of all solutions. From our results, we can know where blow up occur. Moreover, for any finite distinct points, the sequence of solutions that blow up precisely at these points can be constructed.
We extend the results of   Li in \cite[CPAM, 1996]{LYY} from  the local problem to nonlocal cases.
\end{abstract}
{\bf Key words:} Fractional Laplacian, Nirenberg problem, Blow up analysis.

{\noindent\bf Mathematics Subject Classification (2020)}\quad 35B38 · 35B44 · 35J20

\section{Introduction}
Great attention has been focused on the study of fractional and nonlocal operators of elliptic type, both for pure mathematical research and in view of concrete real-world applications. This type of operator arises in a quite natural way in many different contexts, such as,  the thin obstacle problem, optimization,  phase transitions,
minimal surfaces, materials science, water waves,  population dynamics, geophysical fluid dynamics
and mathematical finance. For more details and applications, see \cite{app,cla,CS,cp,HMOW,JLX,JLX2,ae,jv} and references therein.

In this paper, we are concerned with the Nirenberg's problem in the
fractional setting which constitutes in itself a branch in geometric analysis.
We first introduce  the Nirenberg problem.
Let $(\mathbb{S}^n,g_{0})$ be the standard $n$-sphere.
The Nirenberg problem is the following:
 which  function $K$ on $\mathbb{S}^{2}$ is the Gauss curvature of a metric $g$ on $\mathbb{S}^2$
 conformally equivalent to $g_0?$
  If we write $g=e^{v} g_{0},$  this problem is equivalent to finding a function $v$ on
$\mathbb{S}^2$ to solving
\begin{align}\label{1.69}
-\Delta_{g_{0}}v+1=K(x)e^{2v}\quad \text{on}\,\,\mathbb{S}^2,
\end{align}
where $\Delta_{g_{0}}$ denotes the Laplace-Beltrami operator associated with the metric $g_{0}.$

Naturally one may ask a similar question in higher dimensional case, namely
which function $K$ on $\mathbb{S}^n$ $(n\geq 3)$ is the scalar curvature of a metric $g$ on $\mathbb{S}^n$
conformally equivalent to $g_{0}?$
If we write $g=v^{4/(n-2)}g_{0},$
 this problem is  equivalent to finding a function
 $v$ on $\mathbb{S}^{n}$ which satisfies the following equation:
\begin{align}\label{1.70}
-\Delta_{g_{0}} v+c(n) R_{0} v=
c(n) K(x) v^{\frac{n+2}{n-2}} \quad  \text { on } \,\mathbb{S}^{n},
\end{align}
where $c(n)=(n-2)/(4(n-1)),$ $ R_{0}=n(n-1)$ is the scalar curvature of $g_{0}$.

It is well known that a  necessary condition for solving \eqref{1.69} or \eqref{1.70} is that $K$
should be positive somewhere.
Kazdan and Warner \cite{KW} obtained another necessary
condition  for the existence of solutions by  exploiting the center
dilation conformal transformations of $\mathbb{S}^n.$

The first significant result on the Nirenberg problem was made by
Koutroufiotis \cite{Kou}, which established  the existence of the solutions to \eqref{1.69}
by assuming that $K$ is an antipodally symmetric function which close to 1.
Morse \cite{Mos} proved the existence of antipodally symmetric solutions to \eqref{1.69}  for all antipodally symmetric functions $K$ which are positive somewhere.
Later on, Chang and Yang \cite{CY}  further extended this existence result to the case of
$K$ without any symmetry assumption. In addition, Bahri and Coron \cite{BC}  gave
a sufficient condition for existence of the solutions to  \eqref{1.70} in  dimension $n=3$ by assuming
 that $K(x)$ has only  nondegenerate critical points.
As for  the compactness of all solutions in dimensions $n=2,3,$
 Chang et al. \cite{CGY}, Han \cite{Han}, and
Schoen and Zhang \cite{SZ} proved that  a sequence of solutions cannot blow up at
more than one point.

Li \cite{LYYJ, LYY}  established the compactness and existence results for \eqref{1.70}
by characterizing the flatness order of $K(x)$ near its critical points with $(*)_{\bt}$ conditions.
More precisely, the cases of $\beta>n-2$ and $\bt= n-2$ are given in
\cite{LYYJ} and \cite{LYY},  respectively.
In these two papers, the compactness result is very different from the previous low-dimensional case.
In fact, when $n=2$ or $n=3,$ a sequence of solutions to the
Nirenberg problem cannot blow up at more than one point.
However, if $n>3$, there
 could be blow up at many points, which considerably complicates the study of the problem.

The linear operators defined on left-hand  side
of \eqref{1.69} and \eqref{1.70} are called the conformal Laplacian
associated to the metric $g_{0}$ and are denoted as $P_{1}^{g_{0}}.$
For any Riemannian manifold $(M,g),$
let $R_{g}$ be the scalar curvature of $(M,g),$
and the conformal Laplacian be defined as $P_{1}^{g}=-\Delta_{g}+\frac{n-2}{4(n-1)}R_{g}.$
The Paneitz operator $P_{2}^{g}$ is
another conformal invariant operator, which was discovered by Paneitz \cite{Pa}.
Graham et al.  \cite{GJMS} generalized the
 operators $P_{1}^{g}$ and $P_{2}^{g}$ to a sequence of integer
 order conformally covariant elliptic operators $P_{k}^{g}$ for $k\in\{1,2,\cdots\}$
 if $n$ is odd, and $k\in \{ 1,\cdots, n/2\}$ if $n$ is even.
 Furthermore, Peterson \cite{Pe} constructed an intrinsically defined conformally covariant
 pseudo-differential operator of arbitrary real number order.
 Graham and Zworski \cite{GZ} introduced a mesomorphic family of conformally invariant operators on the conformal infinity of asymptotically hyperbolic manifolds.
Chang and Gonz\'alez \cite{CG} proved that the operator $P_{\sigma}^{g}$ of non-integer order $\sigma\in (0,n/2)$ can  be defined as a Dirichlet-to-Neumann operator of a conformally compact
Einstein manifold by using localization method in \cite{CS}.
This lead naturally to a fractional order curvature  $R_{\sigma}^{g}:=P_{\sigma}^{g}(1),$
which will be called  $\sigma$-curvature in this paper.
The fractional operators $P_{\sigma}^{g}$ and their associated fractional order
curvatures $P_{\sigma}^{g}(1)$  have been
the subject of many studies, for instance, see \cite{AC,Chti1,Chti2,CLZ,JLX,JLX2,Liu_cpaa}.

As in the Nirenberg problem associated to $P_{1}^{g},$
the  question of prescribing  $\sigma$-curvature  can be formulate as fractional
Nirenberg problem as follows:
which function $K$ on $\mathbb{S}^n$ is the $\sigma$-curvature of a
metric $g$ on $\mathbb{S}^n$  conformally equivalent to $g_0?$
If we denote $g=v^{4/(n-2\sigma)}g_{0},$
this problem can be expressed as finding the solution of the following
nonlinear equation with critical exponent:
\begin{align}\label{1.71}
P_{\sigma}^{g}(v)=c(n, \sigma)
K v^{\frac{n+2 \sigma}{n-2 \sigma}} \quad \text { on }\, \mathbb{S}^{n},
\end{align}
where $c(n,\sigma)=\Gamma(\frac{n}{2}+\sigma)/\Gamma(\frac{n}{2}-\sigma),$
 $K$ is a function defined on $\mathbb{S}^n,$
\begin{align*}
P_{\sigma}^{g}=\frac{\Gamma(B+\frac{1}{2}+\sigma)}{\Gamma(B+\frac{1}{2}-\sigma)}, \quad B=\sqrt{-\Delta_{g_{0}}
+\Big(\frac{n-1}{2}\Big)^{2}},
\end{align*}
 and $\Gamma$ is the Gamma function. In what follows, $P_{\sigma}^{g}$ is simply written  as $P_{\sigma}.$

Let $ K\in C^{1,1}(\mathbb{S}^n)$ be a positive function and $\beta $ is a positive constant,
we say that $K$ satisfies the flatness condition  $(*)_{\beta}$ if for every critical point $\xi_{0}$ of $K,$ in some geodesic normal coordinates $\{y_{1}, \cdots, y_{n}\}$ centered at $\xi_{0}$, there exists a small neighborhood $\mathscr{O}$ of $0$  and
$a_{j}(\xi_{0})\ne 0,$ $\sum_{j=1}^{n}a_j(\xi_{0})\neq 0,$
such that
\begin{align*}
K(y)=K(0)+\sum_{j=1}^{n} a_{j}(\xi_{0})|y_{j}|^{\beta}+R(y) \quad \text { in } \mathscr{O},
\end{align*}
where
$$\sum_{s=0}^{[\beta]}
|\nabla^s R(y)||y|^{-\bt+s}\rightarrow 0\quad \hbox{\,as\,}\,y\rightarrow 0,$$
here $\nabla^{s}$ denotes all possible derivatives of order $s$ and
$[\beta]$ is the integer part of $\beta.$

For $0<\sigma<1,$ Jin et al. \cite{JLX, JLX2} proved the existence of the solutions to \eqref{1.71}
and derived some compactness properties  when $K$ satisfies the $(*)_{\beta}$
condition with the flatness order $ \beta\in(n-2\sigma, n),$
by using the approach based on approximation of the solutions to \eqref{1.71} by a blow up
subcritical method. Since their conclusions is valid only when the flatness order
$\beta>n-2\sigma,$ some very interesting functions $K$ are excluded.
In fact, note that an important class of functions,
which is worth including in the results of existence and compactness for \eqref{1.71},
are the Morse functions with only nondegenerate critical points.
Such functions  satisfy the $(*)_{2}$ condition.

By using a self-contained approach,  the description of lack of compactness
and the existence results of the  solutions to \eqref{1.71} were given
by Abdelhedi et al. \cite{Chti1} when $\beta\in (1,n-2\sigma]$, and by Chtioui  and Abdelhedi  \cite{Chti2} when $\beta\in [n-2\sigma, n).$
However, under the  assumption of the flatness order $\beta=n-2\sigma,$
which is called the critical flatness condition in this paper,
 the precise  compactness result and the degree-counting formula  of the solutions to \eqref{1.71} is unknown.
Therefore, it is natural to  study the compactness results
when the prescribing curvature function $K$ satisfy  the critical flatness condition.
When $\sigma=1$ and  $K$ satisfy  the critical flatness condition, namely $\beta=n-2, $ the compactness and existence results of the
solutions to \eqref{1.70} was obtained by Li \cite{LYY}.

What we  consider here is  the case when the  prescribing $\sigma$-curvature function  satisfy the critical  flatness order $\beta=n-2\sigma=2,$   which include an important  class of  functions,  for instance the Morse functions.
In addition, we can establish the optimal compactness result and give a degree-counting formula of all solutions to \eqref{1.71} in this case.
In this paper, we study  the equation \eqref{1.71}  when  $n=3$ and $\sigma=1/2.$
Especially, from our results, we show that a sequence of solutions to
\eqref{1.71} can blow up at more than one point and   for any finite distinct points
on $\mathbb{S}^3,$  we can construct a sequence of solutions to \eqref{1.71} that blow up precisely at these points.
In a forthcoming paper, we deal with the higher order case, i.e.,
$n=2\sigma+2$ for any $1<\sigma<n/2.$

Before state our results, we introduce some definitions and notations.

For $\sigma\in (0,1),$ the fractional Laplacian  is a
nonlocal pseudo-differential operator, taking the form:
\begin{align}
\begin{aligned}\label{2.7}
(-\Delta)^{\sigma} u(x):
&=
C(n, \sigma)P.\,V.\, \int_{\mathbb{R}^{n}} \frac{u(x)-u(y)}{|x-y|^{n+2 \sigma}} \,\ud y\\
&=C(n, \sigma) \lim _{\varepsilon \rightarrow 0^{+}} \int_{\mathbb{R}^{n} \backslash B_{\varepsilon}(x)} \frac{u(x)-u(y)}{|x-y|^{n+2 \sigma}} \,\ud y, \quad x \in \mathbb{R}^{n},
\end{aligned}
\end{align}
where $B_{\varepsilon}(x)$ is the ball centered at $x \in \mathbb{R}^{n}$
with radius $\varepsilon.$
Here $P.\,V.\,$ is a commonly used abbreviation for ``in the principal value sense'' and $C(n, \sigma)$ is a dimensional constant that depends on $n$ and $\sigma$, precisely given by
$$
C(n, \sigma):=\Big(\int_{\mathbb{R}^{n}}
\frac{1-\cos \zeta_{1}}{|\zeta|^{n+2 \sigma}} \ud \zeta\Big)^{-1}
$$
with $\zeta=(\zeta_{1}, \zeta^{\prime}),$ $\zeta^{\prime} \in \mathbb{R}^{n-1}.$

The singular integral given in \eqref{2.7} can be written as a weighted second-order differential quotient as follows (see \cite[Lemma 3.2]{EGE}):
$$
(-\Delta)^{\sigma} u(x):=-\frac{1}{2} C(n, \sigma)
\int_{\mathbb{R}^{n}} \frac{u(x+y)+u(x-y)-2 u(x)}{|y|^{n+2 \sigma}} \,\ud y, \quad x \in \mathbb{R}^{n}.$$

 This  operator is well defined in $\mathscr{S}$, the Schwartz space of rapidly decreasing $C^{\infty}$ function in $\mathbb{R}^{n}$, and it can be equivalently defined in terms of the Fourier transform:
$$
(-\Delta)^{\sigma} u(x):=\mathscr{F}^{-1}
(|\xi|^{2 \sigma}(\mathscr{F} u)(\xi))(x), \quad x \in \mathbb{R}^{n}.
$$
where  $\mathscr{F}$ denotes the Fourier transform operator.

Let $\dot{H}^{\sigma}(\mathbb{R}^{n})$ denote the closure of the set
$C_{c}^{\infty}(\mathbb{R}^{n})$ of compactly supported smooth functions under the norm
$$
\|u\|_{\dot{H}^{\sigma}(\mathbb{R}^{n})}=\||\xi|^{\sigma} \mathscr{F}(u)(\xi) \|_{L^{2}(\mathbb{R}^{n})}.
$$
For any $u \in \dot{H}^{\sigma}\left(\mathbb{R}^{n}\right)$, we set
\begin{align}\label{2.8}
U(x, t)=\mathcal{P}_{\sigma}[u]:=\int_{\mathbb{R}^{n}} \mathcal{P}_{\sigma}(x-\xi, t) u(\xi) \ud \xi, \quad(x, t) \in \mathbb{R}_{+}^{n+1}:=\mathbb{R}^{n} \times(0, \infty),
\end{align}
where
$$
\mathcal{P}_{\sigma}(x, t)=\beta(n, \sigma) \frac{t^{2 \sigma}}{(|x|^{2}+t^{2})^{(n+2 \sigma) / 2}},
$$
with a constant $\beta(n, \sigma)$ such that $\int_{\mathbb{R}^{n}} \mathcal{P}_{\sigma}(x, 1) \ud x=1$.   Let us denote that
for any  open set  $D\subset\mathbb{R}_+^{n+1}$, the space  $L^{2}(t^{1-2 \sigma}, D)$  is the Banach space endowed with the norm
$$
\|V\|_{L^{2}(t^{1-2 \sigma}, D)}:=\Big(\int_{D}t^{1-2 \sigma}
 V^{2} \,\ud X\Big)^{1 / 2}<\infty,
$$
for any $V\in L^{2}(t^{1-2 \sigma}, D)$.
Then the above $U(x,t) \in L^{2}(t^{1-2 \sigma}, K)$ for any compact set $K$ in $\overline{\mathbb{R}_{+}^{n+1}}, \nabla U(x,t)  \in L^{2}(t^{1-2 \sigma}$, $\mathbb{R}_{+}^{n+1})$ and $U(x,t)  \in C^{\infty}(\mathbb{R}_{+}^{n+1}).$

By the celebrated work by Caffarelli and Silvestre (see \cite{CS}), one can find that $U(x,t)$ satisfies
\begin{align*}
\operatorname{div}(t^{1-2 \sigma} \nabla U)=0 \quad \text { in } \mathbb{R}_{+}^{n+1},
\end{align*}
\begin{align*}
\|\nabla U\|_{L^{2}(t^{1-2 \sigma}, \mathbb{R}_{+}^{n+1})}=N_{\sigma}\|u\|_{\dot{H}^{\sigma}(\mathbb{R}^{n})},
\end{align*}
and
\begin{align*}
-\lim _{t \rightarrow 0} t^{1-2 \sigma} \partial_{t} U(x, t)=N_{\sigma}(-\Delta)^{\sigma} u(x) \quad \text { in } \mathbb{R}^{n}
\end{align*}
in the distribution sense, where $N_{\sigma}=2^{1-2\sigma}\Gamma(1-\sigma)/\Gamma(\sigma).$
Here  one refer $U(x,t)=\mathcal{P}_{\sigma}[u]$ in \eqref{2.8}  as the extension of $u \in \dot{H}^{\sigma}(\mathbb{R}^{n})$.

Let $\Omega \subset \mathbb{R}^{n}$ $(n \geq 3)$ be a domain,
$\tau_{i} \geq 0,$ $i=1,2,\cdots,$ satisfy $\lim _{i \rightarrow \infty} \tau_{i}=0,$ $p_{i}=
(n+2 \sigma) /(n-2 \sigma)-\tau_{i}$, and $K_{i} \in C^{1,1}(\Omega)$ satisfy, for some constants $A_{1},$ $A_{2}>0,$
\begin{align}\label{1.49}
1 / A_{1} \leq K_{i}(x) \leq A_{1} \quad \text { for all } x \in \Omega, \quad\left\|K_{i}\right\|_{C^{1,1}(\Omega)} \leq A_{2}.
\end{align}
Let $u_{i} \in L^{\infty}(\Omega) \cap \dot{H}^{\sigma}(\mathbb{R}^{n})$ with $u_{i} \geq 0$ in $\mathbb{R}^{n}$ satisfy
\begin{align}\label{1.50}
(-\Delta)^{\sigma} u_{i}=c(n, \sigma) K_{i} u_{i}^{p_{i}} \quad \text { in }\, \Omega,
\end{align}
where $c(n,\sigma)$ is as in \eqref{1.71}.

\begin{definition}
  Suppose that $\{K_{i}\}$ satisfies \eqref{1.49} and $\{u_{i}\}$ satisfies \eqref{1.50}.
  A point $\overline{y} \in \Omega$ is called a blow up point of $\{u_{i}\}$ if there exists a sequence
  $y_{i}$ tending to $\overline{y}$ such that $u_{i}(y_{i}) \rightarrow \infty$.
\end{definition}

\begin{definition}\label{defn1.1}
A blow up point $\overline{y} \in \Omega$ is called an isolated blow up point of $\{u_{i}\}$ if there exist $0<\overline{r}<\operatorname{dist}(\overline{y}, \Omega)$, $\overline{C}>0$, and $a$
sequence $y_{i}$ tending to $\bar{y}$, such that $y_{i}$ is a local maximum point of $u_{i},$ $u_{i}(y_{i}) \rightarrow \infty$ and
\begin{align}\label{1.57}
u_{i}(y) \leq \overline{C}|y-y_{i}|^{-2 \sigma (p_{i}-1)} \quad \text { for all }\, y \in B_{\overline{r}}(y_{i}).
\end{align}
\end{definition}

Let $y_{i} \rightarrow \overline{y}$ be an isolated blow up point of $\{u_{i}\}$, and define,
for $r>0$,
$$
\overline{u}_{i}(r):=\frac{1}{|\partial B_{r}(y_{i})|} \int_{\partial B_{r}(y_{i})} u_{i}
\quad \text{and}\quad
\overline{w}_{i}(r):=r^{2 \sigma /(p_{i}-1)} \overline{u}_{i}(r).
$$
\begin{definition}
A point  $y_{i} \rightarrow \overline{y} \in \Omega$ is called an isolated simple blow up point if $y_{i} \rightarrow \overline{y}$ is an isolated blow up point such that for some $\rho>0$ (independent of i), $\overline{w}_{i}$ has precisely one critical point in $(0, \rho)$ for large $i$.
\end{definition}

For $K \in C^{2}(\mathbb{S}^{3})$, we introduce the following notation:
\begin{align}\label{1.65}
\begin{aligned}
\mathscr{K}&=\{q \in \mathbb{S}^{3}: \nabla_{g_{0}} K(q)=0\}, \\
\mathscr{K}^{+}&=\{q \in \mathbb{S}^{3}: \nabla_{g_0} K(q)=0,\,
\Delta_{g_0} K(q)>0\}, \\
\mathscr{K}^{-}&=\{q \in \mathbb{S}^{3}: \nabla_{g_{0}} K(q)=0,\, \Delta_{g_{0}} K(q)<0\},\\ \mathscr{M}_{K}&=\{v \in C^{2}(\mathbb{S}^{3}): v \text { satisfies \eqref{1.71}} \}.
\end{aligned}
\end{align}
 For any $k\in\mathbb{N}_{+}$ distinct points $q^{(1)}, \cdots, q^{(k)}
\in \mathscr{K} \backslash \mathscr{K}^{+},$
the following $k \times k$ real symmetric matrix $M$ is defined by,
for $i,j=1,\cdots,k,$
\begin{align}
\begin{aligned}\label{M}
&M_{ii}=-
 \frac{\Delta_{g_{0}}K(q^{(i)})}
{K(q^{(i)})^{3}},\\
&M_{i j}
=-6\frac{G_{q^{(i)}}(q^{(j)})}
{K(q^{(i)}) K(q^{(j)})},\quad i\ne j,
\end{aligned}
\end{align}
where
\begin{align}\label{gf}
G_{q^{(i)}}(q^{(j)})
=&\frac{1}{1-\cos d(q^{(i)},q^{(j)})}
\end{align}
 is the Green's function of $P_{\sigma}$ on $\mathbb{S}^3,$
 and  $d(\cdot\,,\,\cdot)$ denotes the geodesic distance.
Let $\mu(M)$ denote the smallest eigenvalue of $M$, and
when $k=1,$
 $$
 \mu(M)=M=-\frac{\Delta_{g_0}K(q^{(1)})}{K(q^{(1)})^3}.$$

Now we are going to present our first  result about characterization
of blow up behavior of the solutions, which is:
\begin{theorem}\label{thm1}
Let $K \in C^{2}\left(\mathbb{S}^{3}\right)$ be a positive function and $\mathscr{K},
\mathscr{K}^{-},\mathscr{K}^{+}$ be as in \eqref{1.65}.
Let $p_{i}$ satisfy $p_{i} \leq 2,$ $ p_{i} \rightarrow 2,$ $\tau_{i}=2-p_{i},$
$K_{i} \in C^{2}(\mathbb{S}^{3})$ satisfy $K_{i} \rightarrow K$
in $C^{2}(\mathbb{S}^{3}),$ and $v_{i}\in C^{2}(\mathbb{S}^3)$ satisfy
\begin{align}\label{1.1}
P_{\sigma}(v_{i})= K_{i} v_{i}^{p_i}, \quad v_{i}>0\quad \text{ on }\,\mathbb{S}^{3},
\end{align}
and
\begin{align*}
\lim_{i\rightarrow\infty}\max_{\mathbb{S}^3}v_{i}=\infty.
\end{align*}
Then there exists a constant $\delta^{*}>0$
depending only on $ \min_{\mathbb{S}^{3}} K$ and $\|K\|_{C^{2}(\mathbb{S}^{3})},$
such that after passing to a subsequence,
\begin{enumerate}
  \item[(i)]  $\{v_i\}$ (still denote the subsequence by $\{v_i\}$)  has only
  isolated simple blow up points $q^{(1)}, \cdots, q^{(k)} \in \mathscr{K} \backslash \mathscr{K}^{+}$
   $(k \geq 1)$ with $|q^{(j)}-q^{(\ell)}| \geq \delta^{*},$  $\forall\, j \neq \ell,$
  and $\mu(M(q^{(1)}, \cdots, q^{(k)})) \geq 0.$
  Furthermore, $q^{(1)}, \cdots, q^{(k)} \in \mathscr{K}^{-}$ if $k \geq 2.$
  \item[(ii)]
    Let $q^{(1)},\cdots,q^{(k)}$ be as in (i), and
     $q_{i}^{(j)}$ be the local maximum of $v_i$
     with $q_{i}^{(j)}\rightarrow q^{(j)},$ we have
  \begin{align}
  \lambda_{j}&:=K(q^{(j)})^{-1}
  \lim_{i\rightarrow\infty}v_{i}(q_{i}^{(1)})(v_{i}(q_{i}^{(j)}))^{-1}\in (0,\infty),
  \label{1.86}\\
  \mu^{(j)}&:=\lim_{i\rightarrow \infty} \tau_{i} v_{i}(q_{i}^{(j)})^{2} \in[0, \infty).\label{1.74}
 \end{align}
  \item[(iii)]
  Let $\lambda_{j},\mu^{(j)}, j=1,\cdots,k$ be as in (ii), then when $k=1,$
    \begin{align}\label{1.6}
    \mu^{(1)}=-4\frac{\Delta_{g_0} K(q^{(1)})}{K(q^{(1)})^{3}},
     \end{align}
when $k\geq 2$,
\begin{align}\label{1.10}
\sum_{\ell=1}^{k} M_{\ell j}(q^{(1)}, \cdots, q^{(k)})
\lambda_{\ell}= \lambda_{j} \mu^{(j)}, \quad \forall\, j: 1 \leq j \leq k.
\end{align}

\item [(iv)]
$\mu^{(j)}\in(0,\infty),$ $\forall\,j=1, \cdots, k,$ if and only if $\mu(M(q^{(1)},\cdots,q^{(k)})) >0.$
\end{enumerate}
\end{theorem}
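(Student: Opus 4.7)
The plan is to follow the blow-up analysis framework of Li \cite{LYY} for the local case, adapted to the fractional setting via the Caffarelli-Silvestre extension. First I would stereographically project the problem from $\mathbb{S}^3$ to $\mathbb{R}^3$, turning $P_{\sigma}(v_i) = K_i v_i^{p_i}$ into an equation of the form $(-\Delta)^{1/2} u_i = c(n,\sigma)\wdt{K}_i u_i^{p_i}$ (up to the standard conformal factor), and then pass to the extensions $U_i = \mathcal{P}_{1/2}[u_i]$ on $\mathbb{R}^{4}_{+}$, which solve a local degenerate harmonic equation with a weighted Neumann-type boundary condition. This reduces the nonlocal analysis to a local one in one higher dimension.

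Next I would carry out the isolated and isolated-simple blow-up analysis in this local setting. Starting from $\max v_i \to \infty$, a selection argument of Schoen type produces local maxima $q_i^{(j)} \to q^{(j)}$ that are pairwise far apart in rescaled coordinates, giving isolated blow-up points satisfying \eqref{1.57}; the upper bound follows from uniform convergence of the rescaled solutions to the standard bubble on $\mathbb{R}^3$. Simplicity of each $q^{(j)}$, i.e.\ uniqueness of the critical point of $\overline w_i$ in $(0,\rho)$, is obtained by contradiction: a second critical point would force a rescaling profile incompatible with the classification of positive entire solutions of the limit equation and with energy concentration. Harnack-type inequalities on the extension then yield the key comparison $v_i(q_i^{(j)})\, v_i(y) \to a_j G_{q^{(j)}}(y) + b_j(y)$ for $y$ away from the blow-up set, with $a_j > 0$ and $G_{q^{(j)}}$ the Green's function of $P_{\sigma}$ on $\mathbb{S}^3$.

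The heart of the argument is a Pohozaev-type identity applied to $U_i$ on small half-balls around each $q^{(j)}$. For an isolated simple blow-up point, the boundary integral splits asymptotically into three pieces: a subcritical contribution of order $\tau_i v_i(q_i^{(j)})^2$ producing $\mu^{(j)}$ in the limit; a local curvature term from Taylor-expanding $K_i$ about $q_i^{(j)}$, whose linear part must vanish (forcing $q^{(j)} \in \mathscr{K}$) and whose quadratic part yields $-\Delta_{g_0}K(q^{(j)})/K(q^{(j)})^{3}$; and, when $k \geq 2$, cross terms from the Green's-function expansion of $v_i(q_i^{(\ell)})\,v_i$ near $q^{(j)}$, contributing the off-diagonal entries $-6\,G_{q^{(j)}}(q^{(\ell)})/(K(q^{(j)})K(q^{(\ell)}))$ weighted by $\lambda_{\ell}$. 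Matching leading orders yields \eqref{1.6} for $k=1$ and \eqref{1.10} for $k \geq 2$; existence of the limits $\lambda_j$ and $\mu^{(j)}$ in (ii) follows from the same expansion, and $\tau_i \geq 0$ gives $\mu^{(j)} \geq 0$.

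Item (i) is then finished by a linear-algebra argument. For $k=1$, \eqref{1.6} combined with $\mu^{(1)} \geq 0$ immediately gives $\mu(M)=M \geq 0$. For $k \geq 2$, all off-diagonal entries of $M$ are strictly negative, so $M$ is a symmetric Z-matrix; \eqref{1.10} with $\mu^{(j)} \geq 0$ and $\lambda > 0$ componentwise gives $M\lambda \geq 0$, and perturbing $M \mapsto M + \varepsilon I$ produces a non-singular M-matrix with $\mu(M+\varepsilon I)>0$, whence $\mu(M) \geq 0$. The strictly negative off-diagonals then force every $M_{jj} > 0$, i.e.\ $q^{(j)} \in \mathscr{K}^-$; the uniform separation $|q^{(j)}-q^{(\ell)}|\geq \delta^*$ follows because $G_{q^{(j)}}(q^{(\ell)}) \to \infty$ as $q^{(\ell)} \to q^{(j)}$ would violate $\mu(M) \geq 0$. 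Finally (iv) follows from the identity $\mu^{(j)} = \mu^{(1)}/(K(q^{(j)})\lambda_j)^2$, which shows that the $\mu^{(j)}$ either all vanish or are all positive, together with the observation that $\mu^{(j)} \equiv 0$ produces $M\lambda=0$ with $\lambda>0$ and hence $\mu(M)=0$. The main obstacle I anticipate is the clean derivation, inside the extension framework, of the precise coefficient $-6$ in $M_{ij}$: this requires careful tracking of the normalization constants $c(n,\sigma)$, $N_{\sigma}$ and $\beta(n,\sigma)$ across the extension, as well as sharp annular decay estimates separating the bubble scale at one blow-up point from the Green's-function behavior seen from another.
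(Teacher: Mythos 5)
Your proposal is correct and follows essentially the same route as the paper: stereographic projection plus the Caffarelli--Silvestre extension, the isolated-simple blow-up structure and separation estimate from Jin--Li--Xiong (which the paper simply cites as their Theorem 5.3 rather than reproving), the Green's-function limit $v_i(q_i^{(j)})v_i \to 2G_{q^{(j)}}/K(q^{(j)})^2 + \widetilde{W}^{(j)}$, the Pohozaev identity on small half-balls yielding the relation between $\mu^{(j)}$, $\Delta_{g_0}K(q^{(j)})$ and $W^{(j)}(0)$, and finally linear algebra on $M$. The only (immaterial) variation is in the last step: you invoke symmetric Z-/M-matrix theory to get $\mu(M)\ge 0$ and $M_{jj}>0$, whereas the paper pairs \eqref{1.10} with the Perron--Frobenius nonnegative eigenvector of $\Lambda I - M$; both arguments are valid and essentially equivalent.
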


In what follows, we define
\begin{align}\label{1.79}
\begin{aligned}
\mathscr{A}=\{&K\in C^{2}(\mathbb{S}^{3}):
K>0~\text{on}~\mathbb{S}^{3},\,\Delta_{g_{0}} K \neq 0 \text { on } \mathscr{K},\\
&\mu(M(q^{(1)}, \cdots, q^{(k)})) \neq 0, \,\forall\, q^{(1)},
\cdots, q^{(k)} \in \mathscr{K}^{-},\, k \geq 2\},
\end{aligned}
\end{align}
and
\begin{align}\label{1.80}
C^{2}(\mathbb{S}^{3})^{+}:=\{K \in C^{2}
(\mathbb{S}^{3}): K>0\,\, \text{on} \,\,\mathbb{S}^{3}\}.
\end{align}
  It is obvious that $\mathscr{A}$ is open in $C^{2}(\mathbb{S}^{3})$ and
   $\mathscr{A}$ is dense in $C^2(\mathbb{S}^3)^+$
with respect to the $C^{2}$ norm.

We will introduce an integer-valued continuous function Index: $\mathscr{A}\rightarrow\mathbb{Z},$
which has an explicit formula for $K\in \mathscr{A}$ being
a Morse function.
\begin{definition}\label{defn1.2}
We define $\mathrm{Index}$: $\mathscr{A}\rightarrow \mathbb{Z}$ by the following properties:
\begin{enumerate}
\item [(i)] For any Morse function $K\in \mathscr{A}$ with $\mathscr{K}^{-}=\{ q^{(1)},\cdots, q^{(s)}\},$
we define
\begin{align*}
\mathrm{Index}(K)=
-1+\sum_{k=1}^{s}
\sum_{\substack{\mu(M(q^{(i_1)},\cdots, q^{(i_{k})}))>0\\
1\leq i_{1}< \cdots< i_k \leq s}}(-1)^{k-1+\sum_{j=1}^{k}i(q^{(i_{j})})},
\end{align*}
where $i(q^{(i_{j})})$ denotes the Morse index of $K$ at $q^{(i_{j})}.$

\item [(ii)] $\mathrm{Index}:$ $\mathscr{A}\rightarrow \mathbb{Z}$ is continuous with respect to the $C^{2}(\mathbb{S}^3)$ norm of $\mathscr{A}$ and hence is locally constant.
\end{enumerate}
\end{definition}

\begin{remark}
The existence and  uniqueness of the $\mathrm{Index}$ mapping  follows
from Theorem \ref{thm2.1} and the proof of Theorem \ref{thm4} below.
\end{remark}

Our second result is about the compactness of the solutions  when $K\in \mathscr{A},$
 which is:
\begin{theorem}\label{thm2.1}
Let $\mathscr{A}$ be as in \eqref{1.79} and $K\in \mathscr{A}.$  Then
there exists a  constant $C=C(K)>0,$ such that for any $K_{i}\rightarrow K$  in
$C^{2}(\mathbb{S}^3),$ and any $v_{i}\in \mathscr{M}_{K_{i}},$ we have
\begin{align}\label{1.93}
1/C \leq \liminf _{i \rightarrow \infty}(\min_{\mathbb{S}^{3}} v_{i})
\leq \limsup _{i \rightarrow \infty}(\max _{{\mathbb{S}}^{3}} v_{i}) \leq C.
\end{align}
Furthermore,  for any $\alpha\in(0,1),$ there exists a constant $R=R(K,\alpha)>0,$ such that for any
$v\in \mathscr{M}_{K},$ we have
\begin{align*}
1/R<v(x)<R,\quad\forall\, x\in \mathbb{S}^3 \quad\hbox{and}\quad
\|v\|_{C^{2,\alpha}(\mathbb{S}^3)}<R,
\end{align*}
where  $\mathscr{M}_{K_i}$ and $\mathscr{M}_{K}$ are as in \eqref{1.65}.
\end{theorem}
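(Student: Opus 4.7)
The plan is to use Theorem \ref{thm1} as the main engine: we argue by contradiction that no sequence $v_i \in \mathscr{M}_{K_i}$ can develop a blow up point under the hypotheses defining $\mathscr{A}$, and then upgrade the resulting $L^\infty$ bound to a $C^{2,\alpha}$ estimate and a positive lower bound. The critical observation is that we are exactly at the critical exponent $p_i \equiv 2 = (n+2\sigma)/(n-2\sigma)$, so $\tau_i = 2 - p_i \equiv 0$; consequently \eqref{1.74} automatically forces $\mu^{(j)} = 0$ at every would-be blow up point.

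Suppose towards contradiction that $\max_{\mathbb{S}^3} v_i \to \infty$ along a subsequence. Theorem \ref{thm1} produces isolated simple blow up points $q^{(1)}, \ldots, q^{(k)} \in \mathscr{K} \setminus \mathscr{K}^+$ with $\mu(M) \geq 0$. If $k = 1$, substituting $\mu^{(1)} = 0$ into \eqref{1.6} yields $\Delta_{g_0} K(q^{(1)}) = 0$, contradicting the first clause of \eqref{1.79}. If $k \geq 2$, then by Theorem \ref{thm1}(i) every $q^{(j)} \in \mathscr{K}^-$, and since $\mu^{(j)} = 0$ for each $j$, the equivalence in Theorem \ref{thm1}(iv) forces $\mu(M) \leq 0$; combined with $\mu(M) \geq 0$ this gives $\mu(M) = 0$, contradicting the second clause of \eqref{1.79}. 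Hence $\limsup_i \max_{\mathbb{S}^3} v_i \leq C(K)$.

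With the $L^\infty$ bound established, Schauder theory for $P_\sigma$ (via the Caffarelli--Silvestre extension, or via the integral representation through the Green's function \eqref{gf}) yields $\|v_i\|_{C^{2,\alpha}(\mathbb{S}^3)} \leq C'$; by Arzel\`a--Ascoli a subsequence converges in $C^2$ to some $v_* \geq 0$ solving $P_\sigma v_* = K v_*^2$. To rule out $v_* \equiv 0$, test \eqref{1.1} against $v_i$ and integrate: using the fractional Sobolev inequality on $\mathbb{S}^3$ and the fact that $p_i + 1 = 2n/(n-2\sigma)$ is the critical Sobolev exponent, one gets $\|v_i\|_{L^{p_i+1}} \geq c > 0$, which together with the $L^\infty$ bound prevents uniform convergence of $v_i$ to zero. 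Therefore $v_* \not\equiv 0$, and the strong maximum principle for $P_\sigma$ forces $v_* > 0$ on $\mathbb{S}^3$, yielding $\liminf_i \min_{\mathbb{S}^3} v_i \geq 1/C(K)$. The "Furthermore" clause follows by applying the uniform estimate to the constant sequence $K_i \equiv K$, combined with the same $C^{2,\alpha}$ regularity.

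All the conceptual difficulty lives inside Theorem \ref{thm1}; the only real obstacle in the present argument is the bookkeeping that converts the blow up data $(\mu^{(j)} = 0,\,\mu(M) \geq 0)$ into a direct contradiction with the two nondegeneracy conditions defining $\mathscr{A}$, and then the standard but nontrivial task of excluding $v_* \equiv 0$ in the limiting profile.
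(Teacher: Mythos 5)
Your argument is correct and follows the same overall strategy as the paper's proof: assume $\max_{\mathbb{S}^3}v_i\to\infty$, invoke Theorem \ref{thm1}, observe that $\tau_i\equiv 0$ forces $\mu^{(j)}=0$ at every blow up point, and derive a contradiction with the two nondegeneracy clauses of \eqref{1.79}. The only places you diverge are in how two sub-steps are discharged, and both of your substitutes are legitimate: for $k=1$ the paper cites \cite[Theorem 5.5]{JLX} to exclude a single blow up point, whereas you read the same conclusion off \eqref{1.6} together with the clause $\Delta_{g_0}K\neq 0$ on $\mathscr{K}$; for $k\geq 2$ the paper re-runs the nonnegative-eigenvector pairing to get $\mu(M)=0$, whereas you simply quote Part (iv), which packages that very computation. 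For the lower bound and the $C^{2,\alpha}$ estimate the paper appeals to the Harnack inequality of \cite[Lemma 4.3]{JLX} and the Schauder estimates of \cite[Theorem 2.11]{JLX}, while you argue by compactness: the uniform $C^{2,\alpha}$ bound, Arzel\`a--Ascoli, the Sobolev lower bound $\|v_i\|_{L^{3}}\geq c>0$ obtained by testing the equation against $v_i$, and positivity of the Green's function \eqref{gf} to exclude a vanishing limit. Your route buys a slightly more self-contained exclusion of $k=1$ and of the zero limit; the paper's Harnack route is shorter once the cited lemmas are granted. Either way the conclusion and the quantifier structure (uniform constants via the standard diagonal argument over sequences) are correct.
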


For any given $0<\alpha<1,$ $R>0,$   we define
\begin{align}\label{1.82}
\mathscr{O}_{R}:=\{v\in C^{2,\alpha}(\mathbb{S}^3):
1/R<v<R,\, \|v\|_{C^{2,\alpha}(\mathbb{S}^3)}<R\}.
\end{align}
Our third  result is about  degree-counting  formula and the existence of the solutions to \eqref{1.71},
which is:
\begin{theorem}\label{thm4}
Let $\mathscr{A}$ be as in \eqref{1.79}, $K\in \mathscr{A}$
and $\mathrm{Index}(K)$ be as in Definition \ref{defn1.2}.
Then for any $\alpha\in (0,1),$ there exists a constant $R_{0}=R_{0}(K,\alpha),$ such that for all $R>R_{0},$
we have
\begin{align}\label{1.38}
\deg_{C^{2,\alpha}}(v-P_{\sigma}^{-1}(Kv^{2}), \mathscr{O}_{R}, 0)=\mathrm{Index}(K),
\end{align}
where  $\deg_{C^{2,\alpha}}$ denotes the Leray-Schauder degree in $C^{2,\alpha}(\mathbb{S}^3).$

Furthermore, if $\mathrm{Index}(K)\ne 0,$ then
\eqref{1.71} has at least one solution.
\end{theorem}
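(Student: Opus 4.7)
The plan is to prove \eqref{1.38} by combining (i) the a priori bound of Theorem~\ref{thm2.1}, which makes the Leray–Schauder degree well-defined, (ii) a homotopy that reduces the problem to Morse $K\in\mathscr{A}$, and (iii) a subcritical approximation combined with the blow-up analysis of Theorem~\ref{thm1} to extract the formula in Definition~\ref{defn1.2}(i). Concretely: the operator $F_K(v):=v-P_{\sigma}^{-1}(Kv^{2})$ is a compact perturbation of the identity on $C^{2,\alpha}(\mathbb{S}^3)$ since $P_{\sigma}^{-1}\colon C^{\alpha}\to C^{2,\alpha}$ is compact, and Theorem~\ref{thm2.1} places $\mathscr{M}_K$ strictly inside some $\mathscr{O}_{R_0}$, so $\deg_{C^{2,\alpha}}(F_K,\mathscr{O}_R,0)$ is defined and $R$-independent for $R>R_0$. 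Moreover, the conditions in \eqref{1.79} are $C^{2}$-open, and along any $C^2$-continuous path $K_s\subset\mathscr{A}$ the theorem produces an $R$ uniform in $s$ on compact subintervals, so the degree is locally constant on $\mathscr{A}$; combined with Definition~\ref{defn1.2}(ii) and the density of Morse functions in $\mathscr{A}$, it suffices to prove \eqref{1.38} for Morse $K\in\mathscr{A}$.

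For such a $K$ and $p<2$ close to $2$, introduce $F_p(v):=v-P_{\sigma}^{-1}(Kv^{p})$. Solutions of $F_p(v)=0$ are uniformly bounded by subcritical estimates and fractional Harnack inequalities, hence all lie in some $\mathscr{O}_{R'}$ independent of $p$; a further homotopy from $K$ to a positive constant, kept in the subcritical regime, shows that the only positive solution is a constant with one-dimensional negative linearized eigenspace, giving
\begin{equation*}
\deg_{C^{2,\alpha}}(F_p,\mathscr{O}_{R'},0)=-1.
\end{equation*}
As $p\nearrow 2$, Theorem~\ref{thm1} describes every non-convergent sequence: it blows up at a tuple $\{q^{(i_1)},\ldots,q^{(i_k)}\}\subseteq\mathscr{K}^{-}$ with $\mu(M(q^{(i_1)},\ldots,q^{(i_k)}))>0$ (the degenerate case $\mu=0$ is excluded by $K\in\mathscr{A}$, and part (iv) then gives $\mu^{(j)}\in(0,\infty)$). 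Around each such tuple the approximate bubble solutions form a smooth finite-dimensional family in $C^{2,\alpha}$, so by excising disjoint tubular neighborhoods $\mathcal{U}_{i_1,\ldots,i_k}$ of these families,
\begin{equation*}
\deg_{C^{2,\alpha}}(F_K,\mathscr{O}_R,0)=-1-\sum_{k=1}^{s}\;\sum_{\substack{1\leq i_1<\cdots<i_k\leq s\\ \mu(M)>0}}\deg_{C^{2,\alpha}}(F_{p},\mathcal{U}_{i_1,\ldots,i_k},0).
\end{equation*}

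The central and most delicate step is the local computation
\begin{equation*}
\deg_{C^{2,\alpha}}(F_{p},\mathcal{U}_{i_1,\ldots,i_k},0)=-(-1)^{k-1+\sum_{j=1}^{k}i(q^{(i_j)})}.
\end{equation*}
This is obtained by a Lyapunov–Schmidt reduction along the finite-dimensional manifold parametrized by the bubble heights $(\lambda_1,\ldots,\lambda_k)$ of \eqref{1.86} and by bubble centers near $(q^{(i_1)},\ldots,q^{(i_k)})$; using the asymptotics in Theorem~\ref{thm1}(ii)--(iii), the reduced map has Jacobian at the zero predicted by \eqref{1.10} that factors (up to a positive normalization) as $\det M(q^{(i_1)},\ldots,q^{(i_k)})\cdot\prod_{j=1}^{k}\det\mathrm{Hess}_{g_0}K(q^{(i_j)})$. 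On $\mathscr{K}^{-}$ the diagonal entries $M_{jj}=-\Delta_{g_0}K(q^{(i_j)})/K(q^{(i_j)})^{3}$ are positive, and a signature calculation using $\mu(M)>0$ yields $\mathrm{sign}\det M=(-1)^{k-1}$, which combined with the Morse factors $(-1)^{i(q^{(i_j)})}$ produces the asserted sign. The hardest parts are (a) verifying that every near-blow-up subcritical solution actually lies inside the prescribed $\mathcal{U}_{i_1,\ldots,i_k}$, so that the excision is legitimate; (b) inverting the linearization of $F_p$ around multi-bubble approximates, for which $\mu(M)>0$ is the essential non-degeneracy hypothesis; and (c) carrying the sign computation through uniformly in $p$. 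Summing all contributions yields $\deg_{C^{2,\alpha}}(F_K,\mathscr{O}_R,0)=-1+\sum(-1)^{k-1+\sum_{j} i(q^{(i_j)})}=\mathrm{Index}(K)$. For the existence statement: when $\mathrm{Index}(K)\neq 0$ the degree is non-zero, so $F_K(v)=0$ has a solution in $\mathscr{O}_R$, producing a positive $C^{2,\alpha}$-solution of \eqref{1.71}.
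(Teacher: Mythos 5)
Your overall architecture coincides with the paper's: reduce to Morse $K\in\mathscr{A}$, pass to the subcritical problem, show the total subcritical degree equals $-1$, and subtract the local degrees of the blow-up configurations $(q^{(i_1)},\ldots,q^{(i_k)})\subset\mathscr{K}^{-}$ with $\mu(M)>0$ singled out by Theorem \ref{thm1} and Proposition \ref{prop3}. However, your central sign computation, as stated, is wrong. Since $\mu(M)$ is the \emph{smallest} eigenvalue of $M$, the hypothesis $\mu(M)>0$ means $M$ is positive definite, hence $\det M>0$; your claim that ``a signature calculation using $\mu(M)>0$ yields $\mathrm{sign}\det M=(-1)^{k-1}$'' is false whenever $k$ is even, and the product $\det M\cdot\prod_j\det\mathrm{Hess}_{g_0}K(q^{(i_j)})$ cannot produce the parity factor $(-1)^{k-1}$ appearing in Definition \ref{defn1.2}(i). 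In the paper's reduction (Theorem \ref{thm3} via Propositions \ref{prop1}, \ref{lem5}, \ref{lem6}, \ref{lem7}) the finite-dimensional variables are the amplitudes $\alpha_i$, the scales $t_i$ and the centers $P_i$: the scale block is the gradient of a strictly convex function $\widehat F$ built from $M$ together with the $\tau\log$ term, which has a unique nondegenerate critical point of Morse index zero and therefore contributes $+1$ (not $\mathrm{sign}\det M$), while the factor $(-1)^{k}$ comes from the amplitude block $\beta_i\mapsto-|\mathbb{S}^3|\beta_i$. Your parametrization by the heights $\lambda_j$ of \eqref{1.86} and the centers alone omits the amplitude directions and misattributes the parity to $M$, so the bookkeeping does not close.

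A second gap: your baseline identity $\deg(F_p,\mathscr{O}_{R'},0)=-1$ rests on homotoping $K$ to a positive constant and asserting that the constant function is then the unique positive subcritical solution. For $P_{1/2}$ on $\mathbb{S}^3$ with $p$ close to $2$ this uniqueness is a nontrivial claim that you do not justify. The paper instead homotopes to $K^{*}=x^{(4)}+2$, for which the Kazdan--Warner obstruction excludes all solutions of the critical equation, so that as $\tau\to0$ the whole degree is carried by a single one-bubble configuration whose contribution is read off from Theorem \ref{thm3}; no uniqueness statement is needed. Finally, note that the paper computes the blow-up contributions as Leray--Schauder degrees in $H^{\sigma}(\mathbb{S}^3)$ and only afterwards identifies them with the $C^{2,\alpha}$ degree (via the argument of Li, Theorem B.2); if you insist on working in $C^{2,\alpha}$ throughout, the compactness of the relevant operators on the excised neighborhoods in the H\"older topology must be checked separately.
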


\begin{remark}\label{re1}
It follows from Theorem \ref{thm1} that when $K\in\mathscr{A},$ the solutions to \eqref{1.71}  belong to $\mathscr{O}_{R}$ for some
$R>0.$ We call the left-hand side of \eqref{1.38}  the total degree of the solutions to the fractional equation.
 From Theorem \ref{thm4}, the total degree is $\mathrm{Index}(K).$
\end{remark}

For any finite subset $\mathcal{R}\subset \mathbb{S}^{3},$
we use $\sharp\mathcal{R}$ to denote the number of elements in the set $\mathcal{R}$.
Let us now state a corollary of Theorem \ref{thm4}, which is:
\begin{corollary}\label{cor1}
Let $\mathscr{A}$ be as in \eqref{1.79} and $K\in \mathscr{A}$ be a
 Morse function satisfying $\sharp\mathscr{K}^{-}\leq 1$ or
for any distinct $P,$ $Q\in \mathscr{K}^{-},$
\begin{align}\label{1.97}
\Delta_{g_{0}}K(P)\Delta_{g_{0}}K(Q)<9K(P)K(Q).
\end{align}
Then for any $\alpha\in(0,1),$ there exists a constant $C=C(K,\alpha)>0,$
such that for all solutions $v$ of \eqref{1.71}, we have
$$
1/C<v(x)<C,\quad \forall\, x\in\mathbb{S}^3,\quad\|v\|_{C^{2, \alpha}(\mathbb{S}^3)}<C,
$$
and for all $R\geq C,$
\begin{align*}
\deg_{C^{2,\alpha}} (v-P_{\sigma}^{-1}(Kv^2), \mathscr{O}_{R}, 0)
=\mathrm{Index} (K)=-1+\sum_{\substack{\nabla_{g_0}K(q_{0})=0
\\ \Delta_{g_{0}}K(q_0)<0}}(-1)^{i(q_{0})},
\end{align*}
where $i(q_0)$ denotes the Morse index of $K$ at $q_{0}.$

Furthermore, if
\begin{align*}
\sum_{\substack{\nabla_{g_0}K(q_{0})=0
\\ \Delta_{g_{0}}K(q_0)<0}}(-1)^{i(q_{0})}\ne 1,
\end{align*}
then \eqref{1.71} has at least one solution.
\end{corollary}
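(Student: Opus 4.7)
The plan is to combine the compactness Theorem \ref{thm2.1} with the degree-counting Theorem \ref{thm4}, and then reduce the combinatorial definition of $\mathrm{Index}(K)$ under the hypotheses of the corollary. Since $K\in\mathscr{A}$ is given, Theorem \ref{thm2.1} immediately supplies the uniform a priori bound $1/C<v<C$ and $\|v\|_{C^{2,\alpha}(\mathbb{S}^{3})}<C$ for every solution $v$ of \eqref{1.71}, and Theorem \ref{thm4} then yields
$$\deg_{C^{2,\alpha}}(v-P_{\sigma}^{-1}(Kv^{2}),\mathscr{O}_{R},0)=\mathrm{Index}(K)$$
for all $R\geq C$. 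So the whole content of the corollary reduces to showing that under its hypotheses the formula in Definition \ref{defn1.2}(i) collapses to $-1+\sum_{q_{0}\in\mathscr{K}^{-}}(-1)^{i(q_{0})}$, together with applying nontriviality of the degree for the existence part.

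The key step is to check that for every $k\geq 2$ and every distinct $q^{(i_{1})},\dots,q^{(i_{k})}\in\mathscr{K}^{-}$, the smallest eigenvalue $\mu(M)$ of the associated matrix is strictly negative; then only the singleton contributions survive, and for each singleton $\{q\}\subset\mathscr{K}^{-}$ one has $\mu(M)=-\Delta_{g_{0}}K(q)/K(q)^{3}>0$, giving the announced reduced formula. When $\sharp\mathscr{K}^{-}\leq 1$ the collection of subsets of size $\geq 2$ is empty and there is nothing to check.

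The main case is $\sharp\mathscr{K}^{-}\geq 2$ under \eqref{1.97}. I first handle the $2\times 2$ situation, which is where \eqref{1.97} enters directly. For distinct $P,Q\in\mathscr{K}^{-}$, the diagonal entries $M_{ii}=-\Delta_{g_{0}}K(\cdot)/K(\cdot)^{3}$ are positive, so $\mathrm{tr}\,M>0$, and $\mu(M)<0$ becomes equivalent to $\det M<0$. A direct calculation from \eqref{M} and \eqref{gf} gives
$$K(P)^{2}K(Q)^{2}\det M=\frac{\Delta_{g_{0}}K(P)\,\Delta_{g_{0}}K(Q)}{K(P)K(Q)}-\frac{36}{(1-\cos d(P,Q))^{2}}.$$
Since $1-\cos d(P,Q)\leq 2$ for any distinct $P,Q\in\mathbb{S}^{3}$, the second term is $\leq -9$, and combined with \eqref{1.97} this forces $\det M<0$; note the constant $9$ in \eqref{1.97} is sharp precisely because of the antipodal case. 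For $k\geq 3$, I invoke the Cauchy interlacing theorem: the smallest eigenvalue of a real symmetric matrix is dominated by the smallest eigenvalue of any principal submatrix, so the already-established $2\times 2$ bound propagates to every larger principal submatrix indexed by points of $\mathscr{K}^{-}$.

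Putting the pieces together produces the degree formula claimed in the corollary. For the existence statement, if $\sum_{q_{0}\in\mathscr{K}^{-}}(-1)^{i(q_{0})}\neq 1$ then $\mathrm{Index}(K)\neq 0$, so the nonzero Leray–Schauder degree guarantees a zero of $v\mapsto v-P_{\sigma}^{-1}(Kv^{2})$ in $\mathscr{O}_{R}$, i.e.\ a solution of \eqref{1.71}. The only nonroutine step in all of this is the matrix inequality that kills the $k\geq 2$ contributions; Theorems \ref{thm2.1} and \ref{thm4} do all the analytic heavy lifting, and the rest is algebraic bookkeeping on Definition \ref{defn1.2}.
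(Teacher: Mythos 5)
Your proposal is correct and follows essentially the same route as the paper: combine Theorems \ref{thm2.1} and \ref{thm4}, then observe that \eqref{1.97} together with $1-\cos d(P,Q)\leq 2$ forces every $2\times 2$ principal minor of $M$ to have negative determinant, so no $M(q^{(i_1)},\dots,q^{(i_k)})$ with $k\geq 2$ can be positive definite and only the singleton terms survive in $\mathrm{Index}(K)$. Your explicit computation of $\det M$ and the Cauchy interlacing step for $k\geq 3$ just spell out what the paper compresses into the remark that positive definiteness requires all principal minors to be positive.
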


Our fourth result is about the blow up behavior of the solutions when the $\sigma$-curvature function
$K\notin \mathscr{A},$ which is:
\begin{theorem}\label{thm2}
Let $\mathscr{A}$ be as in \eqref{1.79} and $C^{2}(\mathbb{S}^3)^{+}$ be as in \eqref{1.80}.
Then for any $K \in C^{2}(\mathbb{S}^{3})^{+} \backslash \mathscr{A}=\partial \mathscr{A},$
there exists $K_{i} \rightarrow K$ in $C^{2}(\mathbb{S}^{3})$ and $v_{i}
\in \mathscr{M}_{K_{i}},$ such that
\begin{align}\label{1.72}
\lim _{i \rightarrow \infty}(\max _{\mathbb{S}^{3}} v_{i})=\infty,
\quad \lim _{i \rightarrow \infty}(\min _{\mathbb{S}^{3}} v_{i})=0,
\end{align}
where  $\mathscr{M}_{K_{i}}$ is as in \eqref{1.65}.
\end{theorem}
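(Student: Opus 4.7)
The plan is as follows. A function $K\in\partial\mathscr{A}$ fails exactly one of the two non-degeneracy conditions defining $\mathscr{A}$: either (a) there exists $\bar{q}\in\mathscr{K}$ with $\Delta_{g_0}K(\bar{q})=0$, or (b) there exist $k\geq 2$ distinct points $\bar{q}^{(1)},\dots,\bar{q}^{(k)}\in\mathscr{K}^-$ with $\mu(M(\bar{q}^{(1)},\dots,\bar{q}^{(k)}))=0$. In each case I will construct $K_i\to K$ in $C^2(\mathbb{S}^3)$ and solutions $v_i\in\mathscr{M}_{K_i}$ that concentrate at the offending critical configuration, which forces $\max_{\mathbb{S}^3}v_i\to\infty$; the complementary conclusion $\min_{\mathbb{S}^3}v_i\to 0$ will then follow from the bubble profile of $v_i$ (for $n=3,\sigma=1/2$ the standard bubble satisfies $U_{\lambda,q}(x)\sim \lambda/(1+\lambda^{2}d(x,q)^{2})$, which vanishes at fixed positive distance as $\lambda\to\infty$) together with the blow-up characterization of Theorem \ref{thm1}.

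In case (a), I would first take a $C^2$-small perturbation $K_i$ of $K$ supported in a small neighborhood of $\bar{q}$ with a critical point $\bar{q}_i\to\bar{q}$ satisfying $\Delta_{g_0}K_i(\bar{q}_i)=-\varepsilon_i$, $\varepsilon_i\searrow 0$. Using the standard bubbles $U_{\lambda,q}$ on $\mathbb{S}^3$ (transported from the Euclidean extremals of the fractional Sobolev inequality via stereographic projection), I seek a solution of \eqref{1.71} of the form $v_i=U_{\lambda_i,\bar{q}_i}+\phi_i$ with $\lambda_i\to\infty$ and $\phi_i$ a small correction $L^2$-orthogonal to the kernel of the linearization at $U_{\lambda_i,\bar{q}_i}$. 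A contraction-mapping argument on $\phi_i$ produces the correction, and the resulting finite-dimensional reduced equation is, to leading order, precisely the identity \eqref{1.6} from Theorem \ref{thm1}, which selects a matching divergent rate $\lambda_i\to\infty$ linked to $\varepsilon_i$; the sign requirement $\Delta_{g_0}K_i(\bar{q}_i)<0$ is exactly the compatibility condition that makes this reduction solvable.

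In case (b), letting $(\Lambda_1,\dots,\Lambda_k)$ denote an eigenvector of $M$ for the zero eigenvalue (arranged to have strictly positive entries after, if necessary, a preliminary $C^2$-small deformation of $K$ along $\partial\mathscr{A}$), I use a multi-bubble ansatz $v_i=\sum_{j=1}^{k}U_{\Lambda_j\Lambda_i,\bar{q}_i^{(j)}}+\phi_i$ with a common divergent parameter $\Lambda_i\to\infty$ and close a Lyapunov--Schmidt reduction in $\phi_i$. The reduced equations are exactly the balance relations \eqref{1.10} of Theorem \ref{thm1}, and the zero eigenvector of $M$ produces a one-parameter family of approximate critical points along which $\Lambda_i$ can be driven to infinity in a consistent way, yielding the desired blow-up sequence; the $K_i$ are finally obtained as $C^2$-small perturbations of $K$ near the $\bar{q}^{(j)}$ that absorb the residual reduction error.

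The main obstacle is the Lyapunov--Schmidt reduction for the nonlocal operator $P_\sigma$ on $\mathbb{S}^3$, especially in case (b): the inter-bubble interaction terms must be expanded precisely enough to recover the Green's-function entries in \eqref{M}--\eqref{gf}, so that the kernel direction of $M$ corresponds to an actual degeneracy of the reduced functional (and not merely an artifact of the ansatz). This requires sharp pointwise and weighted decay estimates on the corrector $\phi_i$ and on its Caffarelli--Silvestre extension to $\mathbb{R}^{4}_{+}$, since nonlocality blocks the pointwise manipulations available for the classical Laplacian case treated in \cite{LYY}. Once this reduction is in place, the verifications $K_i\to K$ in $C^2(\mathbb{S}^3)$ and $\min_{\mathbb{S}^3}v_i\to 0$ follow from the explicit bubble decay and standard blow-up analysis.
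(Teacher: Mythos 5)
Your plan replaces the paper's argument by a direct Lyapunov--Schmidt gluing construction, and in that form it has a genuine gap: you are treating the \emph{necessary} conditions of Theorem \ref{thm1} as if they were \emph{sufficient} for producing blowing-up solutions of the critical equation, and in case (a) the reduced equation you would obtain simply has no solution. Concretely, for the critical problem ($\tau=0$) the $t$-derivative of the reduced energy for a single bubble is, to leading order, $\Gamma_{4}\Delta_{g_0}K_i(\bar q_i)K_i(\bar q_i)^{-3}t^{-3}+o(t^{-3})$ (compare Proposition \ref{lem6}: the balancing term $\Gamma_{3}K^{-2}\tau/t$ is present only because $\tau>0$ there). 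If you prescribe $\Delta_{g_0}K_i(\bar q_i)=-\varepsilon_i<0$, this expression is single-signed for $t$ large, so the reduced functional has no interior critical point in the concentration parameter and the reduction cannot be closed. Identity \eqref{1.6} with $\tau_i\equiv 0$ only forces $\Delta_{g_0}K(q^{(1)})=0$ in the limit; it does not ``select a divergent rate $\lambda_i$'' --- there is no free parameter left to balance against. The same difficulty reappears in case (b): the null direction of $M$ makes the reduced system \eqref{1.10} (with all $\mu^{(j)}=0$) degenerate to leading order, and the statement that the $K_i$ are ``obtained as $C^2$-small perturbations that absorb the residual reduction error'' is exactly the unproved hard step, not a routine verification.

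The paper avoids the construction entirely and argues by a degree jump. After reducing to Morse $K\in\partial\mathscr{A}$, it builds a one-parameter family $K_{\ell,t}$, $-1\le t\le 1$, crossing $\partial\mathscr{A}$ transversally at $t=0$ at the single degenerate configuration $(q^{(i_1)},\dots,q^{(i_k)})$ (your case (a) is the $k=1$ instance, since $\mu(M(q))=-\Delta_{g_0}K(q)/K(q)^3$). By Definition \ref{defn1.2}, $\mathrm{Index}(K_{\ell,1})-\mathrm{Index}(K_{\ell,-1})=(-1)^{k-1+\sum_j i(q^{(i_j)})}\neq 0$, so by Theorem \ref{thm4} and the homotopy invariance of the Leray--Schauder degree, compactness must fail along the path; Theorem \ref{thm2.1} forces the failure to occur as $t\to 0$, and Theorem \ref{thm1} together with the Harnack inequality localizes the blow-up at $q^{(i_1)},\dots,q^{(i_k)}$ and yields both limits in \eqref{1.72}. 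If you want to salvage a constructive proof you would need to solve the genuinely degenerate reduced problem at criticality (resolving the degeneracy at higher order in $1/t$ and in the perturbation of $K$), which is a substantially harder task than the existence statement itself.
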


From Remark \ref{re1}, the total degree of solutions to \eqref{1.71} strongly depend on the sign of the  smallest eigenvalue  of $M(q^{(1)},\cdots, q^{(k)}),$
which plays a role in counting the total degree of solutions  and in the compactness result.
 In fact, the points $q^{(1)}, \cdots, q^{(k)}$ for which $\mu(M(q^{(1)}, \cdots, q^{(k)}))$ is positive characterize the so-called asymptotic in the theory of critical points at infinity developed by Bahri \cite{Bahri,
 BC}. For instance, considering a continuous family of functions $K_{t}$ $(0\leq t\leq 1),$ the total degree changes when the smallest eigenvalue of $M_{t}(q^{(1)}, \cdots, q^{(k)})$ crosses zero while it remains unchanged when other eigenvalues cross zero.

It follows from Theorem \ref{thm2} that  when $K\notin \mathscr{A},$ the solutions
to \eqref{1.71} may blow  up. A natural question is where the  blow up occur?
The following results present the accurate location of the blow up.

For any $K\in C^{2}(\mathbb{S}^3)^{+},$  we first define
\begin{align}\label{1.81}
\begin{gathered}
\mathscr{H}(K)=\left\{(q^{(1)}, \cdots, q^{(k)}):\,k \geq 1,\,q^{(j)} \in \mathscr{K} \backslash \mathscr{K}^{+},\, \forall j: 1 \leq j \leq k,\right. \\
\left.\quad q^{(j)} \neq q^{(\ell)},\, \forall j \neq \ell,\, \mu(M(q^{(1)}, \cdots, q^{(k)}))=0\right\}.
\end{gathered}
\end{align}
Our fifth result is about the location of blowing up when $K\notin \mathscr{A},$ which is:
\begin{theorem}\label{thm5}
Let $\mathscr{A}$ be as in \eqref{1.79} and $C^{2}(\mathbb{S}^3)^{+}$ be as in \eqref{1.80}.
For a given function $K\in C^{2}(\mathbb{S}^3)^+\backslash\mathscr{A},$ we have  the following results:
\begin{enumerate}
  \item[(i)]
  For any $K_{i}\rightarrow K$ in $C^{2}(\mathbb{S}^3),$
and $v_{i}\in \mathscr{M}_{K_{i}}$ with $\max_{\mathbb{S}^3} v_{i}\rightarrow \infty,$
then for some $ (q^{(1)},\cdots, q^{(k)}) \in \mathscr{H}(K),$ $\{v_{i}\}$ (after passing to
a subsequence) blows up at precisely the $k$ points.
  \item[(ii)]
 For any  $(q^{(1)},\cdots, q^{(k)}) \in\mathscr{H}(K),$ there exists $K_{i}\rightarrow K$ in $C^{2}(\mathbb{S}^3),$
$v_{i}\in \mathscr{M}_{K_{i}},$ such that $\{v_{i}\}$ blows up at precisely the $k$ points.
\end{enumerate}
\end{theorem}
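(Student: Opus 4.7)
The plan is to handle the two parts separately: part (i) is essentially a direct corollary of Theorem \ref{thm1} with $p_i \equiv 2$, while part (ii) requires constructing solutions via a multi-bubble ansatz and a finite-dimensional Lyapunov-Schmidt reduction.

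For part (i), I apply Theorem \ref{thm1} to the sequence $v_i \in \mathscr{M}_{K_i}$ with $p_i \equiv 2$, $\tau_i \equiv 0$, and $\max_{\mathbb{S}^3} v_i \to \infty$. After passing to a subsequence, Theorem \ref{thm1}(i) yields exactly $k \geq 1$ isolated simple blow-up points $q^{(1)}, \ldots, q^{(k)} \in \mathscr{K} \setminus \mathscr{K}^+$, pairwise separated by at least $\delta^*$, with $\mu(M(q^{(1)}, \ldots, q^{(k)})) \geq 0$. Because $\tau_i \equiv 0$, Theorem \ref{thm1}(ii) forces $\mu^{(j)} = \lim_i \tau_i v_i(q_i^{(j)})^2 = 0$ for every $j$, so the equivalence in Theorem \ref{thm1}(iv) excludes $\mu(M) > 0$. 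Combined with $\mu(M) \geq 0$, this gives $\mu(M) = 0$, and therefore $(q^{(1)}, \ldots, q^{(k)}) \in \mathscr{H}(K)$.

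For part (ii) I fix $(q^{(1)}, \ldots, q^{(k)}) \in \mathscr{H}(K)$ and build solutions by Lyapunov-Schmidt reduction. Note first that the off-diagonal entries $-M_{ij} = 6 G_{q^{(i)}}(q^{(j)})/(K(q^{(i)}) K(q^{(j)}))$ are strictly positive, so for $c$ large enough the matrix $-M + cI$ is entrywise positive; Perron-Frobenius then yields that the zero eigenvalue of $M$ is simple with a componentwise positive eigenvector $\lambda = (\lambda_1, \ldots, \lambda_k)$. I would then work with the multi-bubble ansatz
\begin{align*}
W_i(x) = \sum_{j=1}^{k} \alpha_j^{(i)}\, U_{\epsilon_i^{(j)},\, q_i^{(j)}}(x),
\end{align*}
where $U_{\epsilon, q}$ denotes the standard $\sigma = 1/2$ bubble on $\mathbb{S}^3$ centred at $q$ with concentration scale $\epsilon$, the weights $\alpha_j^{(i)}$ are proportional to $\lambda_j^{1/2}$, and $(\epsilon_i^{(j)}, q_i^{(j)}) \to (0, q^{(j)})$. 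Writing $v_i = W_i + \phi_i$ and inverting the linearisation of $P_\sigma - 2 K_i W_i(\cdot)$ modulo the approximate kernel spanned by $\partial_\epsilon U$ and $\partial_q U$, via a contraction in a suitable weighted Sobolev norm, reduces the full problem to a finite-dimensional system in the parameters $(\alpha^{(i)}, \epsilon_i, q_i)$.

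The substantive obstacle is the solvability of this reduced system. Projecting the equation onto the approximate kernel reproduces, to leading order, the balance relations of Theorem \ref{thm1}(iii),
\begin{align*}
\sum_{\ell=1}^{k} M_{\ell j}(q_i)\, \alpha_\ell^{(i)} = \alpha_j^{(i)}\, \mu_i^{(j)} + o(1), \qquad j = 1, \ldots, k,
\end{align*}
with $\mu_i^{(j)} \to 0$ forced by $\tau_i \equiv 0$. At the unperturbed function $K$ this system is singular exactly along the $\lambda$-direction, which is precisely why blow-up at $(q^{(1)}, \ldots, q^{(k)})$ is borderline possible. I would absorb this degeneracy by perturbing to $K_i = K + \varepsilon_i \psi_i$ with $\psi_i \in C^2(\mathbb{S}^3)$ supported in small neighbourhoods of the $q^{(j)}$ and calibrated so that $\Delta_{g_0}\psi_i(q^{(j)})$ shifts the diagonal entries $M_{jj}(q_i)$ in the prescribed $\lambda$-direction; matching $\varepsilon_i$ to the concentration scale then allows an implicit function theorem resolution of the reduced system, yielding $v_i \in \mathscr{M}_{K_i}$ with $K_i \to K$ in $C^2(\mathbb{S}^3)$ which blow up precisely at the $k$ prescribed points.
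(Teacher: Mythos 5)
Your part (i) is exactly the paper's argument: apply Theorem \ref{thm1} with $\tau_i\equiv 0$, so $\mu^{(j)}=0$ for all $j$, whence $\mu(M)\le 0$ by part (iv) and $\mu(M)\ge 0$ by part (i), forcing $(q^{(1)},\dots,q^{(k)})\in\mathscr{H}(K)$. That part is correct and needs no further comment.

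For part (ii) you take a genuinely different route. The paper does not construct solutions directly: it repeats the proof of Theorem \ref{thm2}, i.e.\ it perturbs $K$ to Morse functions $K_\ell$ for which $(q^{(1)},\dots,q^{(k)})$ is the unique tuple with vanishing $\mu(M)$, embeds each $K_\ell$ in a one-parameter family $K_{\ell,t}$ along which $\mu(M_{\ell,t}(q^{(1)},\dots,q^{(k)}))$ changes sign, observes that $\mathrm{Index}(K_{\ell,1})\ne\mathrm{Index}(K_{\ell,-1})$, and concludes from homotopy invariance of the degree (Theorem \ref{thm4}) that compactness must fail as $t\to 0$; Theorem \ref{thm1} then locates the blow up at precisely those $k$ points. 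This reuses machinery already established (Theorems \ref{thm1}, \ref{thm2.1}, \ref{thm3}, \ref{thm4}) and is indirect but short. Your Lyapunov--Schmidt gluing would, if completed, give more (explicit asymptotics of the blowing-up family), but as written it is a program rather than a proof: the entire linear theory for $P_\sigma$ at the critical exponent, the error estimates for the multi-bubble ansatz, and above all the solvability of the reduced system are asserted, not established. The reduced system is the real obstruction: at $\tau=0$ its leading part is the homogeneous system $s_i^2(Ms)_i=0$ in $s_j=1/t_j$ (compare Proposition \ref{lem6} and \eqref{2.3} with $\tau=0$), which when $\mu(M)=0$ is degenerate along the entire ray $s=c\lambda$, so existence is decided by the next-order terms; your proposal to break the degeneracy by a calibrated perturbation $K+\varepsilon_i\psi_i$ is the right idea but the required matching of $\varepsilon_i$ against the higher-order remainders is precisely the hard step and is not carried out. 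One further imprecision: in the paper's normalization (see \eqref{1.86}, \eqref{1.10} and the decomposition in \eqref{stau}) the positive null eigenvector $\lambda$ of $M$ governs the \emph{ratios of the concentration parameters} $t_j$ (equivalently the bubble heights $v_i(q_i^{(j)})$), while the amplitudes are pinned at $\alpha_j\approx 1/K(q^{(j)})$ by a separate nondegenerate equation; taking the weights $\alpha_j$ proportional to $\lambda_j^{1/2}$ conflates these two roles and should be corrected before the reduction can be set up consistently.
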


\begin{corollary}
For any $k\in \mathbb{N}_{+}$  distinct points $q^{(1)}, \cdots, q^{(k)}\in \mathbb{S}^3,$
  there exists a sequence of Morse functions $\{K_{i}\}\subset \mathscr{A},$ such that
  for some $v_{i}\in \mathscr{M}_{K_{i}},$ $\{v_{i}\}$ blows up at precisely the $k$ points.
\end{corollary}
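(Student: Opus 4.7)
The plan is to reduce the corollary to Theorem \ref{thm5}(ii) by exhibiting a limiting function $K \in C^2(\mathbb{S}^3)^+ \setminus \mathscr{A}$ for which the tuple $(q^{(1)}, \ldots, q^{(k)})$ lies in $\mathscr{H}(K)$. Theorem \ref{thm5}(ii) will then furnish a sequence $K_i \to K$ in $C^2(\mathbb{S}^3)$ together with $v_i \in \mathscr{M}_{K_i}$ blowing up at exactly $q^{(1)}, \ldots, q^{(k)}$, and it remains only to arrange that the $K_i$ themselves are Morse and lie in $\mathscr{A}$.

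For the construction of $K$, I would prescribe, in geodesic normal coordinates centered at each $q^{(j)}$, a non-degenerate quadratic expansion, patched together by a partition of unity into a smooth positive function $K_0$ having $q^{(1)}, \ldots, q^{(k)}$ as isolated non-degenerate critical points. For $k=1$, I choose the Hessian at $q^{(1)}$ to have trace zero (e.g.\ eigenvalues summing to zero, easily done on $\mathbb{S}^3$), which gives $\Delta_{g_0} K_0(q^{(1)}) = 0$ and hence $\mu(M) = 0$, so $K := K_0 \in C^2(\mathbb{S}^3)^+ \setminus \mathscr{A}$ with $(q^{(1)}) \in \mathscr{H}(K)$. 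For $k \geq 2$, I arrange each $q^{(j)}$ to be a non-degenerate local maximum of $K_0$, so that $q^{(j)} \in \mathscr{K}^-(K_0)$; the off-diagonal entries of $M_0$ are strictly negative (since $G_{q^{(i)}}(q^{(j)}) > 0$) while the diagonal entries are strictly positive. Introducing a one-parameter family $K_t$ that shrinks the Hessians at each $q^{(j)}$ by a factor $t \in (0,1]$ (via a localized modification that keeps the critical points fixed), the diagonal entries of $M_t$ tend to $0$ as $t \to 0^+$ while the off-diagonal entries stay bounded away from zero. Consequently $\mu(M_t)$ is positive for $t$ near $1$ and negative for $t$ near $0$, so by continuity some $t^* \in (0,1)$ gives $\mu(M_{t^*}) = 0$. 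A further small global perturbation, holding the second-order data at the $q^{(j)}$'s fixed, eliminates any accidental degenerate critical points or accidental zeros of $\mu(M)$ on other sub-tuples in $\mathscr{K}^-$, yielding a Morse $K \in C^2(\mathbb{S}^3)^+ \setminus \mathscr{A}$ with $(q^{(1)}, \ldots, q^{(k)}) \in \mathscr{H}(K)$.

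Applying Theorem \ref{thm5}(ii) to this $K$ supplies $K_i \to K$ in $C^2(\mathbb{S}^3)$ and $v_i \in \mathscr{M}_{K_i}$ blowing up at $q^{(1)}, \ldots, q^{(k)}$. Since the Morse property is $C^2$-open and $K$ is Morse, $K_i$ is Morse for large $i$. The main obstacle is guaranteeing $K_i \in \mathscr{A}$: the construction in Theorem \ref{thm5}(ii) is essentially a perturbation of $K$ chosen so that the perturbed matrix $M_i$ at the critical points converging to $(q^{(1)}, \ldots, q^{(k)})$ becomes non-degenerate (exactly what is needed for those particular points), but other sub-tuples of critical points of $K_i$ in $\mathscr{K}^-(K_i)$ must also satisfy $\mu(M) \neq 0$. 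Since this is a generic condition in $C^2(\mathbb{S}^3)$, one adds a further perturbation of size $o(1/i)$ supported away from $q^{(1)}, \ldots, q^{(k)}$ (which does not disturb the reduced finite-dimensional problem governing the $v_i$) and re-applies the degree/Lyapunov--Schmidt argument underpinning Theorem \ref{thm5}(ii); this produces $K_i \in \mathscr{A}$ that are Morse functions still admitting $v_i \in \mathscr{M}_{K_i}$ blowing up precisely at the prescribed points.
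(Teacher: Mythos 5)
Your overall strategy --- build a Morse function $K\in C^{2}(\mathbb{S}^3)^{+}\setminus\mathscr{A}$ for which $(q^{(1)},\dots,q^{(k)})$ is the unique tuple in $\mathscr{H}(K)$, then invoke Theorem \ref{thm5}(ii) --- is the right one, and for $k=1$ your trace-free Hessian choice works. Two points in the execution need repair.

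First, in the case $k\ge 2$ you assert that ``consequently $\mu(M_t)$ is positive for $t$ near $1$,'' but this does not follow from what you wrote. The off-diagonal entries $-6G_{q^{(i)}}(q^{(j)})/(K(q^{(i)})K(q^{(j)}))$ can be very negative when the prescribed points are close to each other, and a positive diagonal alone does not make $M$ positive definite; if $\mu(M_1)\le 0$ your intermediate value argument produces no zero crossing in $(0,1)$. The fix is to choose the Hessians at the $q^{(j)}$ (equivalently $|\Delta_{g_0}K_0(q^{(j)})|$) large enough that $M_1$ is strictly diagonally dominant --- possible because the points are fixed and distinct, so the quantities $G_{q^{(i)}}(q^{(j)})$ are finite. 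Then $\mu(M_1)>0$, while $\mu(M_0)<0$ since $M_0$ is a nonzero matrix with zero trace, and the crossing exists.

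Second, your final paragraph treats Theorem \ref{thm5}(ii) as a black box and then tries to force $K_i\in\mathscr{A}$ by an extra $o(1/i)$ perturbation that ``does not disturb the reduced finite-dimensional problem governing the $v_i$.'' This is not justified as written: the blowing-up solutions in the proof of Theorem \ref{thm2} (on which Theorem \ref{thm5}(ii) rests) arise from a jump in the Leray--Schauder degree along a one-parameter family, not from a Lyapunov--Schmidt construction whose solutions one could freeze while perturbing $K_i$ elsewhere. The cleaner resolution is to open up that proof: the approximating functions it produces are the members $K_{\ell,t_i}$ of the family with $t_i\neq 0$, which by properties (a) and (b) of that construction are already Morse functions lying in $\mathscr{A}$. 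So no additional repair step is needed; applying the argument of Theorem \ref{thm2} to your boundary function $K$ directly yields the required sequence.
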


When  further characterizing the blow up behavior of the solution to \eqref{1.71} (see Section \ref{sec3} below),
we mainly use the Pohozaev type identity  (see Proposition \ref{prop11} below)
and its property (see Proposition \ref{prop2} below) to  judge the sign of the Laplace of the prescribing curvature function at the blow up point. Due to the limitation of the form of  the Pohozaev type identity, the method in this paper is only effective for the case $n-2\sigma=2.$ In a forthcoming paper, we deal with the higher order case, i.e.,
$n=2\sigma+2$ for any $1<\sigma<n/2.$

The paper is organized as follows:
In section \ref{sec2}, we  recall some known results on blow up analysis of the  fractional Nirenberg problem
obtained by Jin-Li-Xiong \cite{JLX}.

 In section \ref{sec3}, our main task is to prove Theorem \ref{thm1} and Theorem \ref{thm2.1}.
By using the method of  subcritical approximation, we obtain Theorem \ref{thm1},
which  further characterizes  the blow up points for solutions  to \eqref{1.71}.
More precisely, we consider the subcritical equation with $\tau>0$ small:
\begin{align}\label{1.92}
P_{\sigma}v=Kv^{2-\tau}, \quad v>0 \quad\hbox{on} \,\, \mathbb{S}^3.
\end{align}
Then we use Theorem \ref{thm1} and some results in \cite{JLX} to prove
Theorem \ref{thm2.1}.

Section \ref{sec4} is devoted to proving the Theorems \ref{thm4}, \ref{thm2}, and \ref{thm5}.
 Firstly, we give the definition of  $\Sigma_{\tau}=\Sigma_{\tau}(\overline{P}_{1},\cdots, \overline{P}_{k}),$
  for $\overline{P}_{1},\cdots, \overline{P}_{k}\in \mathscr{K}^{-}$ with $\mu(M(\overline{P}_{1},\cdots, \overline{P}_{k}))>0.$ Then by using Theorem \ref{thm1}
 and some results in \cite{JLX}, we obtain that for $\tau>0$ very small, the solutions to \eqref{1.92}
 either stay bounded or stay  in one of the $\Sigma_{\tau}$ (see Proposition \ref{prop3} below).
 Furthermore,  we obtain the $H^{\sigma}$ topological degree of the solutions to \eqref{1.92} on
$\Sigma_{\tau}\,($see Theorem \ref{thm3} below$).$
It follows from the above results that  for all $0<\tau<2,$
 the $H^{\sigma}$ total degree of the solutions to \eqref{1.92} is equal
 to $-1$ (see Proposition \ref{prop4} below).
Then we can conclude that $H^{\sigma}$ topological degree of those solutions to \eqref{1.92} which remain bounded as $\tau$ tends to zero is equal to $\mathrm{Index}(K).$
Some well-known results in degree theory imply that the $H^{\sigma}$ degree contribution above is equal
 to the $C^{2,\alpha}$ topological degree of those bounded solutions to \eqref{1.92}.
Thus, we prove Theorem \ref{thm4}.
Furthermore, we complete the proof of Theorem \ref{thm2} by using the degree-counting formula and perturbing the function $K$ near its critical point.
In the end, using Theorem \ref{thm1} and the idea of the proof of Theorem \ref{thm2}, we prove  Theorem \ref{thm5}.

In the Appendix, we provide some useful technical
results and elementary estimates.

Finally, we make some conventions on notation.
Let $\mathbb{R}_+^{n+1}:=\mathbb{R}^{n}\times(0,+\infty).$
For $X=(x,t)\in \mathbb{R}^{n+1}$ and $R\geq 0$, the  symbol
$\mathcal{B}_{R}(X)$
denotes the balls in $\mathbb{R}^{n+1}$ with radius $R$ and center $X,$
and $\mathcal{B}_{R}^{+}(X):= \mathcal{B}_{R}(X)\cap \mathbb{R}_{+}^{n+1}.$
The  symbol  $B_{R}(x)$  denotes the ball in $\mathbb{R}^n$ with
radius $R$ and center $x.$
We also write $\mathcal{B}_{R},$ $\mathcal{B}_{R}^{+},$ $B_{R}$
for $\mathcal{B}_{R}(0),$ $\mathcal{B}_{R}^{+}(0),$ $B_{R}(0),$
respectively.  We always denote by $C$ a  positive constant
which is independent of the main parameters, but it
may vary from line to line.

\section{Quick review of some known facts}\label{sec2}
In this section, we review some results about the blow up analysis of the fractional Nirenberg problem
obtained in Jin-Li-Xiong \cite{JLX}.

Let $\sigma\in (0,1),$  $u_{i} \in C^{2}(\Omega) \cap \dot{H}^{\sigma}(\mathbb{R}^{n})$ with $u_{i} \geq 0$ in $\mathbb{R}^{n}$ satisfy \eqref{1.50} with $K_{i}$ satisfying \eqref{1.49}.
Let $U_i(x,t)$ be the extension of $u_{i}$ as in \eqref{2.8}, we have
\begin{align}\label{1.59}
\begin{cases}
\operatorname{div}\left(t^{1-2 \sigma} \nabla U_{i}\right)=0 & \text { in } \mathbb{R}_{+}^{n+1}, \\ -\lim _{t \rightarrow 0} t^{1-2 \sigma} \partial_{t} U_{i}(x, t)=c_{0} K_{i}(x) U_{i}(x, 0)^{p_{i}}& \text { for any } x \in \Omega,\end{cases}
\end{align}
where $c_{0}=2^{1-2 \sigma}  c(n, \sigma)\Gamma(1-\sigma) / \Gamma(\sigma).$

We say that $U \in H(|t|^{1-2 \sigma}, D)$ if $U \in L^{2}(|t|^{1-2 \sigma}, D),$ and its weak derivatives $\nabla U$ exist and belong to $L^{2}(|t|^{1-2 \sigma}, D).$ The norm of $U$ in $H(|t|^{1-2 \sigma}, D)$ is given by
$$
\|U\|_{H(|t|^{1-2 \sigma}, D)}:=\Big(\int_{D}|t|^{1-2 \sigma} U^{2}\,\ud X+\int_{D}|t|^{1-2 \sigma}|\nabla U|^{2} \,\ud X\Big)^{1/2}.
$$

In the following,
for a domain $D \subset \mathbb{R}^{n+1}$ with boundary $\partial D,$
 we denote by $\partial^{\prime} D$ the interior of $\overline{D} \cap \partial \mathbb{R}_{+}^{n+1}$
 in $\mathbb{R}^{n}=\partial \mathbb{R}_{+}^{n+1},$
  and we set $\partial^{\prime \prime} D=\partial D \backslash \partial^{\prime} D.$

\begin{proposition}[Pohozaev type identity]\label{prop11}
Suppose that $K \in C^{1}(B_{2 R}).$ Let $U \in H(t^{1-2 \sigma}, \mathcal{B}_{2 R}^{+})$ with $U \geq 0$ in $\mathcal{B}_{2 R}^{+}$ be a weak solution of
\begin{align*}
\begin{cases}\operatorname{div}\left(t^{1-2 \sigma} \nabla U\right)=0 & \text { in }\, \mathcal{B}_{2 R}^{+} , \\ -\lim _{t \rightarrow 0} t^{1-2 \sigma} \partial_{t} U(x, t)=K(x) U^{p}(x, 0) & \text { on }\, \partial^{\prime} \mathcal{B}_{2 R}^{+},\end{cases}
\end{align*}
where $p>0 .$ Then
\begin{align*}
\int_{\partial^{\prime} \mathcal{B}_{R}^{+}} B^{\prime}(X, U, \nabla U, R, \sigma)
+\int_{\partial^{\prime \prime} \mathcal{B}_{R}^{+}} t^{1-2 \sigma} B^{\prime \prime}(X, U, \nabla U, R, \sigma)=0,
\end{align*}
where
\begin{align}\label{1.11}
\begin{aligned}
&B^{\prime}(X, U, \nabla U, R, \sigma)=\frac{n-2 \sigma}{2} K U^{p+1}+\langle X, \nabla U\rangle K U^{p}, \\
&B^{\prime \prime}(X, U, \nabla U, R, \sigma)=\frac{n-2 \sigma}{2} U \frac{\partial U}{\partial \nu}-\frac{R}{2}|\nabla U|^{2}+R\Big|\frac{\partial U}{\partial \nu}\Big|^{2}.
\end{aligned}
\end{align}
\end{proposition}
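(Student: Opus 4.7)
The plan is to use the classical dilation multiplier argument adapted to the degenerate extension equation. I would test the equation $\operatorname{div}(t^{1-2\sigma}\nabla U)=0$ against the multiplier
$$\phi := \langle X,\nabla U\rangle + \alpha U,\qquad \alpha=\tfrac{n-2\sigma}{2},$$
and integrate over $\mathcal{B}_R^+$. After one integration by parts this produces the boundary flux $\int_{\partial\mathcal{B}_R^+} t^{1-2\sigma}\partial_\nu U\cdot\phi\,dS$ minus the interior term $\int_{\mathcal{B}_R^+} t^{1-2\sigma}\nabla U\cdot\nabla\phi\,dX$. The goal is to show that the interior term reduces to a single boundary contribution on $\partial''\mathcal{B}_R^+$, so that the identity becomes purely boundary data.

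Using the pointwise identity $\nabla U\cdot\nabla\phi=(1+\alpha)|\nabla U|^2+\tfrac12\langle X,\nabla|\nabla U|^2\rangle$, I would then integrate the second piece by parts against the weight $t^{1-2\sigma}$. Since $\operatorname{div}X=n+1$ and $\langle X,\nabla t^{1-2\sigma}\rangle=(1-2\sigma)t^{1-2\sigma}$, and since $\langle X,\nu\rangle=0$ on the flat face $\partial'\mathcal{B}_R^+$ while $\langle X,\nu\rangle=R$ on the spherical face $\partial''\mathcal{B}_R^+$, this yields
\begin{align*}
\int_{\mathcal{B}_R^+}t^{1-2\sigma}\tfrac12\langle X,\nabla|\nabla U|^2\rangle\,dX
=\tfrac{R}{2}\int_{\partial''\mathcal{B}_R^+} t^{1-2\sigma}|\nabla U|^2\,dS
-\tfrac{n+2-2\sigma}{2}\int_{\mathcal{B}_R^+} t^{1-2\sigma}|\nabla U|^2\,dX.
\end{align*}
The choice $\alpha=(n-2\sigma)/2$ is precisely what makes the coefficient $(1+\alpha)-(n+2-2\sigma)/2$ vanish, so the bulk integral $\int t^{1-2\sigma}|\nabla U|^2$ drops out. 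On $\partial''\mathcal{B}_R^+$, $\nu=X/R$ forces $\langle X,\nabla U\rangle=R\,\partial_\nu U$, and collecting the remaining terms produces exactly $t^{1-2\sigma}B''$. On $\partial'\mathcal{B}_R^+$, $X=(x,0)$ gives $\langle X,\nabla U\rangle=\langle x,\nabla_x U\rangle$, and replacing the weighted normal derivative by its boundary value $KU^p$ yields $B'$, completing the identity.

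The main obstacle is regularity: the multiplier $\phi$ involves $\nabla U$, so the interior step $\nabla U\cdot\nabla\phi$ formally requires $D^2U$, whereas $U$ is only assumed to lie in $H(t^{1-2\sigma},\mathcal{B}_{2R}^+)$, and when $\sigma>1/2$ the weight $t^{1-2\sigma}$ is singular at $\{t=0\}$. My strategy would be to localize by a smooth radial cutoff $\eta_\varepsilon(t)$ supported in $\{t>\varepsilon\}$; use the interior smoothness of $U$ in $\mathcal{B}_{2R}^+$ (away from the degenerate face, the equation is uniformly elliptic, so $U\in C^\infty$); carry out the above calculation rigorously on $\{t>\varepsilon\}$; and then let $\varepsilon\to 0$. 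In that limit the $\partial''$ terms are unaffected, while the $\partial'$ contribution is identified with $\int_{\partial'\mathcal{B}_R^+} KU^p\bigl(\langle x,\nabla_x U\rangle+\tfrac{n-2\sigma}{2}U\bigr)$ using the weighted Neumann condition and weighted trace theory, with the integrability ensured by $U(\cdot,0)\in L^{p+1}_{\mathrm{loc}}$ from the boundary equation and the weighted $H^1$ bound on $U$. This limit passage is the technically delicate step but is standard for the Caffarelli--Silvestre extension.
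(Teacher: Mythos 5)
Your multiplier computation is correct, and it is essentially the standard proof: the paper states this proposition without proof as a quoted result of Jin--Li--Xiong, whose argument is exactly this dilation multiplier $\langle X,\nabla U\rangle+\frac{n-2\sigma}{2}U$ tested against $\operatorname{div}(t^{1-2\sigma}\nabla U)=0$, with the bulk term cancelled by the choice of $\alpha$ and the degenerate face handled by truncation at $\{t=\varepsilon\}$. Your identification of the boundary contributions on $\partial'\mathcal{B}_R^+$ and $\partial''\mathcal{B}_R^+$ with $B'$ and $B''$ checks out.
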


\begin{proposition}\label{prop2}
 Let $\mathcal{M}\in \mathbb{R}$ and $\alpha(X)$ be some differentiable function near the origin with $\alpha(0)=0.$
 Then for $U(X)=|X|^{2\sigma-n}+\mathcal{M}+\alpha(X),$ we have
\begin{align}
\lim _{\delta \rightarrow 0}
\int_{\partial^{\prime \prime}
\mathcal{B}_{\delta}^{+}} t^{1-2 \sigma} B''(X, U, \nabla U, \delta, \sigma)
=-\frac{(n-2\sigma)^2}{2}\mathcal{M} |\mathbb{S}^{n-1}|\frac{\mathrm{B}(n/2,1-\sigma)}{2},
\end{align}
where $\mathrm{B}(\cdot\,,\cdot)$ is the Beta function.
\end{proposition}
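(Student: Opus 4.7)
The plan is to decompose $U = U_{0} + \mathcal{M} + \alpha$ with $U_{0}(X) := |X|^{2\sigma - n}$, expand $B''$ multilinearly in this decomposition, and show that every contribution vanishes as $\delta \to 0$ except the single cross term $\tfrac{n-2\sigma}{2}\mathcal{M}\,\partial_{\nu}U_{0}$, which reproduces the claimed constant. I would parameterize $\partial'' \mathcal{B}_{\delta}^{+}$ by $X = \delta\omega$ with $\omega \in \mathbb{S}^{n}_{+}$, so that the outward normal is $\nu = \omega$, the surface element is $\delta^{n} dS_{\omega}$, and the weight becomes $t^{1-2\sigma} = \delta^{1-2\sigma}\omega_{n+1}^{1-2\sigma}$; the overall integration factor is thus $\delta^{n+1-2\sigma}$.

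Because $U_{0}$ is radial, $|\nabla U_{0}|^{2} = (\partial_{\nu}U_{0})^{2}$, and the pure-$U_{0}$ part of $B''$ collapses:
\begin{equation*}
\tfrac{n-2\sigma}{2} U_{0}\partial_{\nu}U_{0} - \tfrac{\delta}{2}|\nabla U_{0}|^{2} + \delta(\partial_{\nu}U_{0})^{2} = -\tfrac{(n-2\sigma)^{2}}{2}\delta^{4\sigma - 2n - 1} + \tfrac{(n-2\sigma)^{2}}{2}\delta^{4\sigma - 2n - 1} = 0.
\end{equation*}
The principal cross term is $\tfrac{n-2\sigma}{2}\mathcal{M}\,\partial_{\nu}U_{0} = -\tfrac{(n-2\sigma)^{2}}{2}\mathcal{M}\,\delta^{2\sigma - n - 1}$, which is scale-invariant against the $\delta^{n+1-2\sigma}$ integration weight and yields
\begin{equation*}
-\tfrac{(n-2\sigma)^{2}}{2}\mathcal{M}\int_{\mathbb{S}^{n}_{+}}\omega_{n+1}^{1-2\sigma}\,dS_{\omega}.
\end{equation*}
Writing $\omega_{n+1} = \cos\theta$, $|\omega'| = \sin\theta$, the surface measure factorizes as $\sin^{n-1}\theta\, d\theta\, dS_{\mathbb{S}^{n-1}}$, and the identity $\int_{0}^{\pi/2}\sin^{n-1}\theta\cos^{1-2\sigma}\theta\, d\theta = \tfrac{1}{2}\mathrm{B}(n/2, 1-\sigma)$ turns the above into exactly $-\tfrac{(n-2\sigma)^{2}}{2}\mathcal{M}\,|\mathbb{S}^{n-1}|\,\tfrac{\mathrm{B}(n/2, 1-\sigma)}{2}$.

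The remaining mixed terms I would dispatch by power-counting in $\delta$, using $|\alpha(X)| \le C|X|$ and the local boundedness of $\nabla\alpha$. Pointwise sizes on $|X| = \delta$ together with the integration factor $\delta^{n+1-2\sigma}$ give: $U_{0}\partial_{\nu}\alpha$ and $\alpha\,\partial_{\nu}U_{0}$ integrate to $O(\delta)$; $\mathcal{M}\partial_{\nu}\alpha$ and $\alpha\partial_{\nu}\alpha$ integrate to $O(\delta^{n+1-2\sigma})$; the purely $\alpha$-quadratic pieces $-\tfrac{\delta}{2}|\nabla\alpha|^{2} + \delta(\partial_{\nu}\alpha)^{2}$ integrate to $O(\delta^{n+2-2\sigma})$. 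Since $\sigma \in (0,1)$ and $n \ge 1$ all exponents are positive, so these vanish as $\delta \to 0$.

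The one delicate group is the gradient cross terms $-\delta\,\nabla U_{0}\cdot\nabla\alpha + 2\delta\,\partial_{\nu}U_{0}\,\partial_{\nu}\alpha$: each summand is only $O(\delta^{2\sigma - n})$ pointwise, hence borderline $O(\delta)$ after integration. The main point, and the principal obstacle, is that this borderline scale does not accumulate a spurious log-divergence: because $\nabla U_{0}$ is radial, $\nabla U_{0}\cdot\nabla\alpha = \partial_{\nu}U_{0}(\nu\cdot\nabla\alpha)$, so the two summands combine into $\delta\,\partial_{\nu}U_{0}(\nu\cdot\nabla\alpha)$, which is genuinely $O(\delta)$ after integration. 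This exact collapse, which only uses the geometry of $U_{0}$ and not any further regularity of $\alpha$, is what makes the limit clean and leaves only the main term identified above.
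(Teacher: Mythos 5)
Your proposal is correct and follows essentially the same route as the paper: expand $B''$ on $|X|=\delta$, observe that the pure $|X|^{2\sigma-n}$ contribution cancels, that the only scale-invariant survivor is $\tfrac{n-2\sigma}{2}\mathcal{M}\,\partial_\nu|X|^{2\sigma-n}$, and evaluate $\int_{\partial''\mathcal{B}_1^+}t^{1-2\sigma}$ via the Beta-function identity. The only remark is that your "delicate" gradient cross terms are not actually borderline — each summand is already $O(\delta^{2\sigma-n})$ pointwise and hence $O(\delta)$ after multiplying by the $\delta^{n+1-2\sigma}$ surface factor, so the radial-collapse observation, while valid, is not needed.
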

\begin{proof}
Since  $U(X)=|X|^{2\sigma-n}+\mathcal{M}+\alpha(X)$, we have
$$
\nabla U(X)=(2\sigma-n) \delta^{2\sigma-n-2}X+\nabla \alpha(X) \quad \text { on }\, \partial'' \mathcal{B}_{\delta}^{+},
$$
and
$$
\frac{\partial U}{\partial\nu}=\nabla U \cdot \nu
=(2\sigma-n)\delta^{2\sigma-n-1} +\frac{\nabla \alpha(X)\cdot X}{\delta}
\quad \text { on }\, \partial'' \mathcal{B}_{\delta}^{+}.
$$
 It follows that
$$
|\nabla U|^{2}=(2\sigma-n)^{2}\delta^{4\sigma-2n-2}
+2(2\sigma-n)\delta^{2\sigma-n-2} \nabla \alpha(X)\cdot X
+|\nabla \alpha(X)|^{2},
$$
and
\begin{align*}
\Big|\frac{\partial U}{\partial\nu}\Big|^{2}
=
(2\sigma-n)^2\delta^{4\sigma-2n-2}
+2(2\sigma-n)\delta^{2\sigma-n-2}\nabla \alpha(X)\cdot X
+|\nabla \alpha(X)\cdot X|^{2}\delta^{-2}
\end{align*}
on $ \partial'' \mathcal{B}_{\delta}^{+}.$
Substituting  the above results into \eqref{1.11}, we can easily obtain
\begin{align*}
&\lim _{\delta \rightarrow 0}
\int_{\partial^{\prime \prime}
\mathcal{B}_{\delta}^{+}} t^{1-2 \sigma} B''(X, U, \nabla U, \delta, \sigma)\\
=&\lim_{\delta\rightarrow 0}
-\frac{(2\sigma-n)^2}{2}\mathcal{M}\delta^{2\sigma-n-1}
\int_{\partial^{\prime \prime}\mathcal{B}_{\delta}^{+}}t^{1-2\sigma}\\
=&-\frac{(2\sigma-n)^2}{2}\mathcal{M}\int_{\partial^{\prime \prime}\mathcal{B}_{1}^{+}}s^{1-2\sigma}\\
=&-\frac{(n-2\sigma)^2}{4}\mathcal{M}|\mathbb{S}^{n-1}|\mathrm{B}(n/2,1-\sigma)|.
\end{align*}
Proposition \ref{prop2} follows from the above.
\end{proof}

\begin{proposition}\label{prop8}
Suppose that for all $\varepsilon \in(0,1),$ $ U \in H(t^{1-2 \sigma},$ $\mathcal{B}_{1}^{+} \backslash \overline{\mathcal{B}_{\varepsilon}^{+}})$
with $U>0$ in $\mathcal{B}_{1}^{+} \backslash \overline{\mathcal{B}_{\varepsilon}^{+}}$ is a weak solution of
$$
\begin{cases}\operatorname{div}\left(t^{1-2 \sigma} \nabla U\right)=0 & \text { in }\, \mathcal{B}_{1}^{+} \backslash \overline{\mathcal{B}_{\varepsilon}^{+}},
\\ -\lim _{t \rightarrow 0} t^{1-2 \sigma} \partial_{t} U(x, t)=0 & \text { on }\, B_{1} \backslash \overline{B_{\varepsilon}^{+}}.
\end{cases}
$$
Then
$$
U(X)=\mathcal{A}|X|^{2 \sigma-n}+\mathcal{W}(X),
$$
where $\mathcal{A}$ is a nonnegative constant and $\mathcal{W} \in H(t^{1-2 \sigma}, \mathcal{B}_{1}^{+})$
satisfies
$$
\begin{cases}
\operatorname{div}\left(t^{1-2 \sigma} \nabla \mathcal{W}\right)=0,
& \text { in }\, \mathcal{B}_{1}^{+}, \\
-\lim _{t \rightarrow 0} t^{1-2 \sigma} \partial_{t}
\mathcal{W}(x, t)=0\, & \text { on }\, B_{1}.
\end{cases}
$$
\end{proposition}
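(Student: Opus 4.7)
The plan is to reduce the statement to a Bôcher-type removable-singularity theorem for the degenerate elliptic operator $L := \operatorname{div}(|t|^{1-2\sigma}\nabla\cdot)$ on the punctured unit ball of $\mathbb{R}^{n+1}$, whose weight is Muckenhoupt $A_2$. First, the homogeneous Neumann condition on the flat part $B_1\setminus\overline{B_\varepsilon}$ permits even reflection: setting $\tilde U(x,t) := U(x,|t|)$, I obtain $\tilde U \in H(|t|^{1-2\sigma},\mathcal{B}_1\setminus\overline{\mathcal{B}_\varepsilon})$ with $\tilde U>0$ and $L\tilde U=0$ weakly in the punctured ball $\mathcal{B}_1\setminus\overline{\mathcal{B}_\varepsilon}\subset\mathbb{R}^{n+1}$. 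The proposition is then equivalent to a decomposition $\tilde U = \mathcal{A}|X|^{2\sigma-n}+\mathcal{W}$ with $\mathcal{A}\ge 0$ and $\mathcal{W}$ a weak $L$-solution on the full ball $\mathcal{B}_1$.

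Next, I observe that $\Phi(X):=|X|^{2\sigma-n}$ is a radial weak solution of $L\Phi=0$ on $\mathbb{R}^{n+1}\setminus\{0\}$, and is (up to a positive multiplicative constant) the fundamental solution of $L$; in particular, the weighted flux
\[
c_\sigma := -\int_{\partial\mathcal{B}_r}|t|^{1-2\sigma}\partial_\nu \Phi\,dS
\]
is a strictly positive constant independent of $r$. Applying the divergence theorem to $\tilde U$ in every annulus $\{r_1<|X|<r_2\}\subset \mathcal{B}_1\setminus\overline{\mathcal{B}_\varepsilon}$ shows that
\[
F := -\int_{\partial\mathcal{B}_r}|t|^{1-2\sigma}\partial_\nu \tilde U\,dS
\]
is likewise independent of $r$. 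I then define $\mathcal{A} := F/c_\sigma$, so that $\mathcal{W}:=\tilde U-\mathcal{A}\Phi$ has vanishing weighted flux through every sphere centered at the origin and satisfies $L\mathcal{W}=0$ weakly away from $0$.

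The heart of the argument — and the main obstacle — is to upgrade $\mathcal{W}$ to a weak $L$-solution across $0$ that lies in $H(|t|^{1-2\sigma},\mathcal{B}_1)$. I plan to do this by a capacity/cutoff argument tailored to the $A_2$-weighted setting: pick a logarithmic cutoff $\eta_\delta$ supported outside $\mathcal{B}_\delta$, test the equation for $\mathcal{W}$ against $\phi\,\eta_\delta$ for an arbitrary $\phi\in C_c^\infty(\mathcal{B}_1)$, and show via the $A_2$ Hardy inequality that the inner boundary terms on $\partial\mathcal{B}_\delta$ — which are controlled precisely by $F-\mathcal{A}c_\sigma=0$ and by $\int_{\partial\mathcal{B}_\delta}|t|^{1-2\sigma}|\mathcal{W}|\,|\nabla\phi|\,dS$ — tend to zero as $\delta\to 0$. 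The choice of $\mathcal{A}$ cancels the leading singular flux of $\Phi$, and the Hardy inequality controls the next-order contribution, so in the limit $\mathcal{W}$ solves $L\mathcal{W}=0$ distributionally, hence weakly, on all of $\mathcal{B}_1$. Weighted energy estimates then place $\mathcal{W}\in H(|t|^{1-2\sigma},\mathcal{B}_1)$, and the Fabes–Kenig–Serapioni regularity theory for $A_2$-degenerate elliptic operators gives Hölder continuity, hence boundedness of $\mathcal{W}$ near the origin.

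Finally, the sign of $\mathcal{A}$ is obtained directly from positivity: since $\Phi(X)\to+\infty$ as $X\to 0$ while $\mathcal{W}$ is bounded near $0$, the assumption $\mathcal{A}<0$ would force $\tilde U = \mathcal{A}\Phi+\mathcal{W}\to-\infty$ at $0$, contradicting $\tilde U>0$. Hence $\mathcal{A}\ge 0$, which completes the proof. The only genuinely nontrivial ingredient is the third paragraph (removable singularity), where one must be careful to handle the degeneracy of the weight on the hyperplane $\{t=0\}$, which passes through the singular point; all remaining steps are routine consequences of weighted elliptic theory.
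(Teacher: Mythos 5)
This proposition is one of the results the paper simply quotes from Jin--Li--Xiong \cite{JLX} in Section 2 and does not prove, so there is no in-paper argument to compare against; your outline follows the standard B\^ocher-type strategy (even reflection, fundamental solution $\Phi=|X|^{2\sigma-n}$ of the $A_2$-degenerate operator, constancy of the weighted flux, subtraction of $\mathcal{A}\Phi$, removable singularity), which is indeed how the result is proved in the source. The first two paragraphs and the flux computation are correct.

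There is, however, a genuine gap exactly at the step you flag as the heart of the argument. Vanishing flux alone does not make the singularity of $\mathcal{W}$ removable: the function $\partial_{x_1}|X|^{2\sigma-n}$ is an $L$-solution in the punctured ball with zero weighted flux through every sphere, yet its singularity ($\sim|X|^{2\sigma-n-1}$) is not removable. What rules this out here is positivity, and it must be used \emph{before} the removability step, not only at the end to fix the sign of $\mathcal{A}$. Concretely, you need the a priori growth bound $\tilde U(X)\le C\big(|X|^{2\sigma-n}+1\big)$ near the origin, obtained by applying the Fabes--Kenig--Serapioni Harnack inequality on chains of balls covering each sphere $\{|X|=r\}$ (giving $\sup_{|X|=r}\tilde U\le C\inf_{|X|=r}\tilde U$ with $C$ uniform in $r$) together with a comparison/maximum-principle argument against $\Phi$ in annuli. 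Only with this bound do the inner boundary terms in your cutoff computation satisfy $\int_{\partial\mathcal{B}_\delta}|t|^{1-2\sigma}|\mathcal{W}|\,dS\le C\delta^{\,n+1-2\sigma}\cdot\delta^{\,2\sigma-n}=C\delta\to 0$, and only then does a Caccioppoli estimate give $\mathcal{W}\in H(|t|^{1-2\sigma},\mathcal{B}_1)$. As written, your appeal to the $A_2$ Hardy inequality is circular: it presupposes $\nabla\mathcal{W}\in L^2(|t|^{1-2\sigma},\mathcal{B}_1)$, which is part of what is being proved. Inserting the Harnack-based growth estimate as a preliminary lemma closes the argument; the rest of the proposal, including the final deduction that $\mathcal{A}\ge 0$ from $\tilde U>0$ and the boundedness of $\mathcal{W}$, is sound.
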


\begin{proposition}\label{prop5}
Suppose that $u_{i} \in C^{2}(\Omega) \cap \dot{H}^{\sigma}(\mathbb{R}^{n})$ with $u_{i} \geq 0$ in $\mathbb{R}^{n}$ satisfies \eqref{1.50} with $K_{i}$ satisfying \eqref{1.49},
 and $y_{i} \rightarrow 0$ is an isolated blow up point of $\{u_{i}\},$ i.e., for some positive constants $A_{3}$ and $\overline{r}$ independent of $i,$
\begin{align}\label{1.57}
|y-y_{i}|^{2 \sigma /(p_{i}-1)} u_{i}(y) \leq A_{3} \quad \text { for all }\, y \in B_{\overline{r}} \subset \Omega.
\end{align}
Denote $U_{i}=\mathcal{P}_{\sigma}[u_{i}]$ and $Y_{i}=(y_{i}, 0).$ Then for any
$0<r<\overline{r}/3,$ we have the following Harnack inequality:
$$
\sup _{\mathcal{B}_{2 r}^{+}(Y_{i}) \backslash
\overline{\mathcal{B}_{r / 2}^{+}(Y_{i})}} U_{i} \leq C  \inf _{\mathcal{B}_{2 r}^{+}
(Y_{i}) \backslash \overline{\mathcal{B}_{r / 2}^{+}(Y_{i})}} U_{i},
$$
where $C$ is a positive constant depending only on $n, \sigma, A_{3}, \overline{r},$ and $\sup _{i}\|K_{i}\|_{L^{\infty}(B_{r}(y_{i}))} .$
\end{proposition}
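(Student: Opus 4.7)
\smallskip

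\noindent\textbf{Proof proposal for Proposition \ref{prop5}.}

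The approach is to rescale the problem so that the isolated blow-up bound turns the nonlinear Neumann condition into a linear Neumann condition with uniformly bounded coefficient, and then invoke a Harnack inequality for the resulting degenerate elliptic equation. Fix $0 < r < \overline{r}/3$ and define
\begin{align*}
\tilde{U}_i(X) := r^{2\sigma/(p_i-1)} U_i(Y_i + rX), \qquad X \in \overline{\mathbb{R}_{+}^{n+1}}.
\end{align*}
Using the homogeneity of the extension operator, one checks directly that $\tilde{U}_i$ satisfies $\operatorname{div}(t^{1-2\sigma}\nabla \tilde{U}_i) = 0$ in $\mathbb{R}_+^{n+1}$ together with the boundary condition $-\lim_{t\to 0} t^{1-2\sigma} \partial_t \tilde{U}_i(x,t) = c_0\,\tilde{K}_i(x)\,\tilde{U}_i(x,0)^{p_i}$ on $\{|x|<\overline{r}/r\}$, where $\tilde{K}_i(x) := K_i(y_i + rx)$ is uniformly bounded above and below by \eqref{1.49}.

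The key observation is that on the annular strip $\{1/4 \le |x| \le 3\}$ the bound \eqref{1.57} becomes $\tilde{u}_i(x) := \tilde{U}_i(x,0) \le A_3\,(1/4)^{-2\sigma/(p_i-1)}$, and since $p_i$ is bounded away from $1$, this gives $\tilde{u}_i \le C$ uniformly. Therefore the coefficient $h_i(x) := c_0\,\tilde{K}_i(x)\,\tilde{u}_i(x)^{p_i-1}$ is uniformly bounded in $L^\infty$, and the Neumann condition on this annular strip reduces to the linear one
\begin{align*}
-\lim_{t\to 0} t^{1-2\sigma}\partial_t \tilde{U}_i(x,t) = h_i(x)\,\tilde{U}_i(x,0), \qquad \|h_i\|_{L^\infty} \le C.
\end{align*}

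For such a linear problem, one now invokes two ingredients: (i) the interior Harnack inequality of Fabes--Kenig--Serapioni for divergence-form operators with $A_2$-Muckenhoupt weights, applied to $\operatorname{div}(t^{1-2\sigma} \nabla \cdot)$ at interior points where $t$ stays bounded away from $0$; and (ii) a boundary Harnack inequality for the same weighted operator under the linear Neumann-type condition above, which is obtained by a Moser iteration adapted to the weight $t^{1-2\sigma}$ (this is precisely the type of result established in Jin--Li--Xiong \cite{JLX}). Covering the closed half-annulus $\{X \in \overline{\mathbb{R}_+^{n+1}} : 1/2 \le |X| \le 2\}$ by finitely many balls adapted to each regime and chaining the pointwise Harnack estimates along a path in this connected region yields $\sup \tilde{U}_i \le C \inf \tilde{U}_i$, and undoing the scaling gives the claim, with $C$ depending only on $n,\sigma,A_3,\overline{r}$ and $\sup_i \|K_i\|_{L^\infty(B_r(y_i))}$.

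The main obstacle is item (ii), the boundary Harnack inequality for the weighted extension operator with bounded linear Neumann coefficient; the nonlinearity $\tilde{U}_i^{p_i}$ is only converted into an $L^\infty$ coefficient because of the specific scaling dictated by \eqref{1.57}, and it is this interplay that lets a purely linear Harnack theorem suffice. Once that ingredient is in hand, the rescaling and chaining are routine.
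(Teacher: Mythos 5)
The paper does not prove this proposition at all: it is quoted from Jin--Li--Xiong \cite{JLX}, where the proof is exactly the rescaling-plus-linear-Harnack argument you describe. Your proposal is correct — the scaling exponent $2\sigma/(p_i-1)$ is precisely the one that turns \eqref{1.57} into a uniform bound for $\tilde u_i$ on the annulus and hence converts the boundary nonlinearity into a bounded zeroth-order Neumann coefficient, and the remaining ingredient (the interior/boundary Harnack inequality for $\operatorname{div}(t^{1-2\sigma}\nabla\cdot)$ with a bounded linear Neumann term, via Moser iteration for the $A_2$ weight) is exactly what \cite{JLX} supplies.
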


\begin{proposition}\label{prop6}
Under the hypotheses of Proposition \ref{prop5},
then for any $R_{i} \rightarrow \infty$ and $\varepsilon_{i} \rightarrow 0^{+},$ we have, after passing to a subsequence (still denoted as $\{u_{i}\},\{y_{i}\},$ etc.),
$$
\|m_{i}^{-1} u_{i}(m_{i}^{-(p_{i}-1)/2 \sigma}\cdot+y_{i})-(1+k_{i}|\cdot|^{2})^{(2 \sigma-n)/2} \|_{C^{2}(B_{2 R_{i}}(0))} \leq \varepsilon_{i},
$$
$$
R_{i} m_{i}^{-(p_{i}-1) / 2 \sigma}  \rightarrow 0 \quad \text { as }\, i \rightarrow \infty,
$$
where $m_{i}=u_{i}(y_{i})$ and $k_{i}=K_{i}(y_{i})^{1 / \sigma}/4$.
\end{proposition}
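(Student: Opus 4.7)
My plan is the classical blow-up/rescaling argument: normalize the sequence to have value $1$ at the blow-up center, extract a limit on all of $\mathbb{R}^{n}$, and invoke the classification of entire positive solutions of the critical fractional Yamabe equation to identify the bubble profile. Set $\lambda_{i}:=m_{i}^{-(p_{i}-1)/2\sigma}$ and
\[
\tilde u_{i}(y):=m_{i}^{-1}u_{i}(\lambda_{i}y+y_{i}),\qquad \tilde U_{i}(X):=m_{i}^{-1}U_{i}(\lambda_{i}X+Y_{i}),
\]
so that $\tilde u_{i}(0)=1$ and $\lambda_{i}\to 0$. A direct change of variables, using the $\sigma$-homogeneity of the extension system and the identity $\lambda_{i}^{2\sigma}m_{i}^{p_{i}-1}=1$, shows that $\tilde U_{i}$ satisfies the analogue of \eqref{1.59} on the expanding set $\mathcal B^{+}_{\overline r/\lambda_{i}}$ with coefficient $\tilde K_{i}(x):=K_{i}(\lambda_{i}x+y_{i})$ in place of $K_{i}$. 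The isolated blow-up bound (1.57) transfers to the scale-invariant inequality $\tilde u_{i}(y)\le A_{3}|y|^{-2\sigma/(p_{i}-1)}$, yielding local uniform upper bounds for $\tilde u_{i}$ away from $0$, and via $\tilde u_{i}(0)=1$ combined with Proposition \ref{prop5} (Harnack) also pointwise near $0$.

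For compactness I would apply the interior and boundary $C^{2}$ estimates for the degenerate elliptic extension equation, which hold uniformly in $i$ because the boundary data $\tilde K_{i}\tilde u_{i}^{p_{i}}$ are uniformly bounded in $C^{\alpha}_{\mathrm{loc}}$ (using \eqref{1.49} and the bounds on $\tilde u_{i}$). A diagonal extraction yields a subsequence along which $\tilde U_{i}\to \tilde U$ in $C^{2}_{\mathrm{loc}}(\overline{\mathbb R^{n+1}_{+}})$, $K_{i}(y_{i})\to K_{\infty}\in[1/A_{1},A_{1}]$, and $p_{i}\to p^{*}:=(n+2\sigma)/(n-2\sigma)$. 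The boundary trace $\tilde u:=\tilde U(\cdot,0)$ is positive on all of $\mathbb R^{n}$ (by the strong maximum principle and Harnack), satisfies $\tilde u(0)=1$ with $0$ a critical point (since $y_{i}$ is a local maximum of $u_{i}$), and solves
\[
(-\Delta)^{\sigma}\tilde u=c(n,\sigma)\,K_{\infty}\,\tilde u^{p^{*}}\quad\text{in }\mathbb R^{n}.
\]

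The crux is the Liouville-type classification: by the result of Chen--Li--Ou and its fractional version used in \cite{JLX}, any positive solution of this conformally invariant equation has the form $\tilde u(y)=(a(1+b|y-y_{0}|^{2}))^{(2\sigma-n)/2}$ for constants $a,b>0$ and a center $y_{0}\in\mathbb R^{n}$. The normalizations $\tilde u(0)=1$ and $\nabla\tilde u(0)=0$ force $y_{0}=0$ and fix the two constants, and substituting the profile into the equation to match coefficients gives $\tilde u(y)=(1+k|y|^{2})^{(2\sigma-n)/2}$ with $k=K_{\infty}^{1/\sigma}/4$. Since this limit is uniquely determined, the full sequence converges in $C^{2}_{\mathrm{loc}}$, and because $K_{i}(y_{i})\to K_{\infty}$ one may replace $k$ by $k_{i}=K_{i}(y_{i})^{1/\sigma}/4$ with an $o(1)$ error that can be absorbed into $\varepsilon_{i}$.

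For the remaining quantitative assertions, $R_{i}\lambda_{i}\to 0$ whenever $R_{i}$ grows slower than $\lambda_{i}^{-1}$, which is the standard interpretation of the statement: given any $R_{i}\to\infty$, a diagonal extraction produces a subsequence along which $R_{i}\lambda_{i}\to 0$ and the $C^{2}(B_{2R_{i}}(0))$ distance from $\tilde u_{i}$ to the bubble is at most $\varepsilon_{i}$ simultaneously. The main technical obstacle in this plan is the classification theorem for positive entire solutions, which is however a known result directly imported from \cite{JLX}; everything else is scaling bookkeeping, uniform $C^{2}$-regularity for the weighted extension, and a diagonal extraction.
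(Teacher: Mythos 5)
The paper does not prove this proposition --- it is quoted from Jin--Li--Xiong \cite{JLX} (Section~\ref{sec2} is explicitly a review of their blow-up analysis) --- and your rescaling-plus-Liouville-classification argument is precisely the proof given there: normalize by $m_i$ and $\lambda_i=m_i^{-(p_i-1)/2\sigma}$, transfer the isolated-blow-up bound to the scale-invariant estimate, use uniform weighted Schauder estimates for the extension to pass to a $C^2_{\mathrm{loc}}$ limit, and identify the limit as the bubble with $k=K_\infty^{1/\sigma}/4$ via the classification theorem together with $\tilde u(0)=1$, $\nabla\tilde u(0)=0$. The argument is sound, including the correct reading of the final assertion as a re-indexed diagonal extraction so that $R_i m_i^{-(p_i-1)/2\sigma}\to 0$ along the chosen subsequence (your parenthetical ``whenever $R_i$ grows slower than $\lambda_i^{-1}$'' would not cover arbitrary $R_i$, but the re-indexing you then describe does).
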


\begin{proposition}\label{prop12}
  Under the hypotheses of Proposition \ref{prop5}, and in addition that $y_{i} \rightarrow 0$ is also an isolated simple blow up point with constant $\rho,$ we have
  \begin{align*}
  \tau_{i}=O(u_{i}(y_{i})^{-2/(n-2\sigma)+o(1)})\quad \hbox{and} \quad
  u_{i}(y_{i})^{\tau_{i}}=1+o(1).
  \end{align*}
Moreover,
$$
u_{i}(y) \leq C u_{i}^{-1}
(y_{i})|y-y_{i}|^{2 \sigma-n} \quad \text { for all } \, |y-y_{i}| \leq 1.
$$
\end{proposition}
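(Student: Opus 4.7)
My plan has three main steps; the last two are handled together. The foundational estimate is the sharp pointwise decay $u_i(y)\le C\,u_i(y_i)^{-1}|y-y_i|^{2\sigma-n}$ on $|y-y_i|\le 1$. Writing $m_i=u_i(y_i)$ and $r_i=R_i m_i^{-(p_i-1)/(2\sigma)}$ as in Proposition \ref{prop6}, on $B_{r_i}(y_i)$ the bound is immediate from the explicit form of the standard-bubble limit. For the annular range $r_i\le |y-y_i|\le \rho$ I will exploit the isolated simple hypothesis: by definition $\bar w_i(r)=r^{2\sigma/(p_i-1)}\bar u_i(r)$ has precisely one critical point in $(0,\rho)$, located near the bubble scale $r_i$ and forced to be a maximum by matching with the standard bubble, so $\bar w_i$ is strictly decreasing on $[r_i,\rho]$. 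Combined with the Harnack inequality of Proposition \ref{prop5} on dyadic annuli, this controls $u_i$ itself by its spherical average and produces the averaged decay; a Bôcher-type comparison carried out in the Caffarelli--Silvestre extension then upgrades the averaged bound to a pointwise one.

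With the decay in hand, the two statements about $\tau_i$ and $u_i(y_i)^{\tau_i}$ are obtained simultaneously from the Pohozaev identity of Proposition \ref{prop11}, applied to the extension $U_i$ on a fixed half-ball $\mathcal{B}_\delta^+(Y_i)$ with $Y_i=(y_i,0)$. Proposition \ref{prop8}, together with the decay from the previous step, allows me to extract a subsequential limit of $m_i U_i$ on $\mathcal{B}_\delta^+(Y_i)\setminus\{Y_i\}$ of the form $a\,|X-Y_i|^{2\sigma-n}+\mathcal{M}(\delta)+\alpha(X)$ with $\alpha(Y_i)=0$; Proposition \ref{prop2} then identifies the corresponding $\partial''$-contribution in the Pohozaev identity as a nonzero multiple of $\mathcal{M}(\delta)\cdot m_i^{-2}$. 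For the $\partial'$-contribution, using the algebraic identity $\tfrac{n-2\sigma}{2}-\tfrac{n}{p_i+1}=-\tfrac{(n-2\sigma)\tau_i}{2(p_i+1)}$ and integrating $\langle X,\nabla U_i\rangle K_i U_i^{p_i}$ by parts on $\partial'$ rearranges the boundary piece into
\[
-\tfrac{(n-2\sigma)\tau_i}{2(p_i+1)}\int_{B_\delta(y_i)} K_i u_i^{p_i+1}\;-\;\tfrac{1}{p_i+1}\int_{B_\delta(y_i)}\bigl((x-y_i)\cdot\nabla K_i\bigr)u_i^{p_i+1}\;+\;\text{(lower order)}.
\]
Rescaling $y-y_i=r_i z$ and invoking Proposition \ref{prop6}, the first integral tends to a nonzero constant (the bubble's critical $L^{p+1}$-mass), while Taylor-expanding $\nabla K_i$ around the limiting critical point of $K$ together with the radial symmetry of the bubble shows that the second integral has size $r_i^2\sim m_i^{-2/(n-2\sigma)+o(1)}$. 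Matching against the $\partial''$-term, which is of size $m_i^{-2}$ and hence negligible relative to $m_i^{-2/(n-2\sigma)}$ when $n-2\sigma>1$, gives $\tau_i=O(m_i^{-2/(n-2\sigma)+o(1)})$, from which $\tau_i\log m_i=o(1)$ and therefore $u_i(y_i)^{\tau_i}=1+o(1)$.

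The main obstacle is the decay estimate itself. Translating the qualitative isolated-simple hypothesis into a sharp pointwise bound with the correct prefactor $m_i^{-1}$ cannot be done by a direct ODE comparison for spherical averages as in the local case; because of nonlocality one must work throughout in the Caffarelli--Silvestre extension, combining Harnack on dyadic annuli with a carefully chosen barrier/super-solution to match the interior bubble scale with the outer singular profile. This delicate blow-up/decay matching is where the bulk of the technical work lies, after which the Pohozaev analysis, while requiring care in tracking scales, is then largely routine.
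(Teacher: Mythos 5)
First, note that the paper does not prove this proposition at all: it appears in Section~\ref{sec2}, a ``quick review of known facts,'' and is quoted from Jin--Li--Xiong \cite{JLX} (their Lemmas~4.6--4.7 and the sharp upper bound that follows them). So your proposal must be measured against the standard proof there, which runs in the order: (1) a \emph{weak} decay estimate $u_i(y)\le C\,u_i(y_i)^{-\lambda_i}|y-y_i|^{-(n-2\sigma)+\delta_i}$ with $\lambda_i\to1$, $\delta_i\to0$, obtained from the isolated-simple monotonicity, the Harnack inequality, and a maximum-principle comparison in the extension; (2) the $\tau_i$ estimate via the Pohozaev identity, with the weak decay controlling the $\partial''$ boundary term; (3) only then the sharp bound $u_i(y)\le Cu_i(y_i)^{-1}|y-y_i|^{2\sigma-n}$, by a further Pohozaev-based contradiction argument that uses $u_i(y_i)^{\tau_i}=1+o(1)$ as input. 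Your Pohozaev computation in step two is essentially the correct one (though you do not need, and are not entitled to, a Taylor expansion of $\nabla K_i$ about a critical point --- the hypotheses of Proposition~\ref{prop5} do not place $\overline y$ at a critical point; the crude bound from Proposition~\ref{prop13} with $s=1$ already gives $O(u_i(y_i)^{-2/(n-2\sigma)})$ for the gradient term).

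The genuine gap is the inverted order: you propose to establish the sharp decay first and deduce the $\tau_i$ estimate from it, but the sharp bound cannot be obtained before the $\tau_i$ estimate --- the argument is circular. Concretely, even on the bubble core $|y-y_i|\le r_i$, where you call the bound ``immediate,'' comparing the bubble value $u_i\approx m_iR_i^{2\sigma-n}$ at $|y-y_i|=r_i=R_im_i^{-(p_i-1)/(2\sigma)}$ with the claimed bound $m_i^{-1}r_i^{2\sigma-n}$ gives a ratio $m_i^{-\tau_i(n-2\sigma)/(2\sigma)}$, so the claimed inequality already presupposes $m_i^{\tau_i}=O(1)$, which is part of what is being proved. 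In the annulus the situation is worse: the monotonicity of $\overline w_i$ only yields $\overline u_i(r)\lesssim \overline u_i(r_i)(r_i/r)^{2\sigma/(p_i-1)}$, i.e.\ decay at the rate $r^{-(n-2\sigma)/2}$, half the sharp exponent; and the barrier/comparison argument you defer to ``the bulk of the technical work'' produces only the weak estimate with prefactor $m_i^{-\lambda_i}$, $\lambda_i<1$, not the prefactor $m_i^{-1}$ with the exact exponent $2\sigma-n$. Closing the gap between the weak and the sharp estimate is precisely where the Pohozaev identity and the already-established relation $u_i(y_i)^{\tau_i}=1+o(1)$ must be used. To repair the proposal, insert the weak decay lemma as a separate first step, run your Pohozaev argument with the weak decay (which still makes the $\partial''$ term $O(m_i^{-2\lambda_i+o(1)})=o(m_i^{-2/(n-2\sigma)})$ since $n-2\sigma>1$ for $n\ge3$, $\sigma\in(0,1)$), and only then prove the sharp bound.
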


\begin{proposition}\label{prop13}
Under the hypotheses of Proposition \ref{prop5}, we have
$$
\int_{|y-y_{i}| \leq r_{i}}|y-y_{i}|^{s}
u_{i}(y)^{p_{i}+1}=
\begin{cases}
O(u_{i}(y_{i})^{-2 s /(n-2 \sigma)}), & -n<s<n, \\ O(u_{i}(y_{i})^{-2 n /(n-2 \sigma)} \log u_{i}(y_{i})), & s=n, \\ o(u_{i}(y_{i})^{-2 n /(n-2 \sigma)}),
& s>n,
\end{cases}
$$
and
$$
\int_{r_{i}<|y-y_{i}| \leq 1}|y-y_{i}|^{s} u_{i}(y)^{p_{i}+1}=
\begin{cases}
o(u_{i}(y_{i})^{-2 s /(n-2 \sigma)}), & -n<s<n, \\ O
(u_{i}(y_{i})^{-2 n /(n-2 \sigma)} \log u_{i}
(y_{i})), & s=n, \\ O(u_{i}(y_{i})^{-2 n /(n-2\sigma )}), & s>n .\end{cases}
$$
\end{proposition}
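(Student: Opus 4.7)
The plan is to split $\{|y-y_i|\leq 1\}$ at the natural rescaling radius $r_i=R_i m_i^{-(p_i-1)/2\sigma}$, where $R_i\to\infty$ is chosen sufficiently slowly (say $R_i=\log m_i$) so that Proposition \ref{prop6} still applies on $|z|\leq R_i$ after rescaling and $r_i\to 0$. The two integrals in the statement are then precisely the inner and outer pieces of $\int_{|y-y_i|\leq 1}$.

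For the inner integral, I would change variables $z=m_i^{(p_i-1)/2\sigma}(y-y_i)$. Proposition \ref{prop6} gives $u_i(y)=m_i(1+k_i|z|^2)^{(2\sigma-n)/2}(1+o(1))$ uniformly on $|z|\leq R_i$. Collecting the Jacobian $m_i^{-n(p_i-1)/2\sigma}$, the weight $|y-y_i|^s=m_i^{-s(p_i-1)/2\sigma}|z|^s$, and $u_i^{p_i+1}=m_i^{p_i+1}(1+k_i|z|^2)^{-(n-2\sigma)(p_i+1)/2}(1+o(1))$, the prefactor in $m_i$ collapses to $m_i^{-2s/(n-2\sigma)}(1+o(1))$, using $(p_i-1)/2\sigma=2/(n-2\sigma)+O(\tau_i)$ together with $m_i^{\tau_i}=1+o(1)$ from Proposition \ref{prop12}. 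The remaining integral $\int_{|z|\leq R_i}|z|^s(1+k_i|z|^2)^{-n(1+o(1))}\,dz$ is finite for $-n<s<n$, equals $O(\log R_i)=O(\log m_i)$ at $s=n$, and is $O(R_i^{s-n})$ for $s>n$, yielding the three stated cases.

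For the outer integral, I would apply the decay bound $u_i(y)\leq Cm_i^{-1}|y-y_i|^{2\sigma-n}$ from Proposition \ref{prop12}. This reduces the integral to
\begin{align*}
C m_i^{-(p_i+1)}\int_{r_i}^{1}r^{s+n-1-(n-2\sigma)(p_i+1)}\,dr,
\end{align*}
whose exponent is $s-n-1+O(\tau_i)$. Direct radial evaluation produces $O(r_i^{s-n})$ for $-n<s<n$, $O(\log(1/r_i))=O(\log m_i)$ for $s=n$, and $O(1)$ for $s>n$. Combining with $m_i^{-(p_i+1)}=m_i^{-2n/(n-2\sigma)}(1+o(1))$ and $r_i^{s-n}=R_i^{s-n}m_i^{-2(s-n)/(n-2\sigma)}$, the factor $R_i^{s-n}\to 0$ when $s<n$ yields the required $o(m_i^{-2s/(n-2\sigma)})$, while the $s=n$ and $s>n$ cases fall out directly.

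The main obstacle is the uniform bookkeeping of the subcritical correction: both the rescaling exponent $(p_i-1)/2\sigma$ and the integrand power $(n-2\sigma)(p_i+1)/2$ deviate from their critical values by $O(\tau_i)$, producing stray factors $m_i^{O(\tau_i)}$ that must be absorbed into the $O$ and $o$ symbols case by case. The quantitative estimates $\tau_i=O(m_i^{-2/(n-2\sigma)+o(1)})$ and $m_i^{\tau_i}=1+o(1)$ supplied by Proposition \ref{prop12} are exactly what make this absorption legitimate; once they are in place, the remaining manipulations are routine.
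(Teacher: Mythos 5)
Your argument is correct and is the standard one: the paper does not prove this proposition itself (it is quoted from Jin--Li--Xiong in the ``known facts'' section), and the proof there is exactly your decomposition at $r_i$, with the bubble approximation of Proposition \ref{prop6} handling the inner piece after rescaling and the decay $u_i(y)\le C\,u_i(y_i)^{-1}|y-y_i|^{2\sigma-n}$ handling the outer piece, with the stray $m_i^{O(\tau_i)}$ factors absorbed via Proposition \ref{prop12}. One remark: that decay estimate requires the blow-up point to be isolated \emph{simple} (it is part of Proposition \ref{prop12}, not \ref{prop5}), and the outer bounds --- in particular the $o(\cdot)$ conclusion for $-n<s<n$ --- genuinely fail under the isolated-blow-up hypothesis alone, so the hypotheses of the proposition should be read as those of Proposition \ref{prop12}, exactly as you have in fact used them.
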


\section{Compactness of solutions  and characterization of blow up behavior}\label{sec3}
In this section, our main task is to prove  Theorem \ref{thm1} and Theorem \ref{thm2.1}.
We first give the proof of  Theorem \ref{thm1}, which further characterizes
 the blow up points for solutions to \eqref{1.71} and palys a key role in proving Theorem \ref{thm2.1}.
Recall the definitions of the matrix $M$ given in \eqref{M} and its smallest eigenvalue $\mu(M)$.

\begin{proof}[Proof of Theorem \ref{thm1}]
From Jin-Li-Xiong \cite[Theorem 5.3]{JLX}, there exists a constant $\delta^{*}>0$ depending only on
$ \min_{\mathbb{S}^{3}} K$ and $\|K\|_{C^{2}(\mathbb{S}^{3})}$
such that $\{v_{i}\}$ has only isolated simple blow up points $q^{(1)}, \cdots, q^{(k)} \in \mathscr{K}$ $(k \geq 1)$ with $|q^{(j)}-q^{(\ell)}| \geq \delta^{*}$
$(j \neq \ell).$

Under the stereographic projection $F$ with $q^{(j)}$
being the south pole:
\begin{align*}
\begin{aligned}
F:\mathbb{R}^3&\rightarrow \mathbb{S}^3 \backslash \{-q^{(j)}\},\quad
 y\mapsto\Big(\frac{2y}{1+|y|^2},\frac{|y|^2-1}{|y|^2+1}\Big),
\end{aligned}
\end{align*}
the Eq.\,\eqref{1.1} is transformed to
\begin{align}
(-\Delta)^{\sigma} u_{i}(y)=
\widetilde{K}_{i}(y) H_{i}(y)^{\tau_{i}} u_{i}(y)^{p_{i}}, \quad y \in \mathbb{R}^{3},
\end{align}
where
\begin{align}\label{1.3}
u_i(y)=\Big(\frac{2}{1+|y|^2}\Big)v_i(F(y)), \quad \widetilde{K}_{i}(y)=K_{i}(F(y)),
\quad H_i(y)=\frac{2}{1+|y|^2}.
\end{align}
Since $0$ is  an isolated simple  blow up point of $u_{i}.$
Let $U_{i}(Y),$ $Y:=(y,t)\in \mathbb{R}_{+}^{4},$ be the extension of $u_{i}(y)$
and satisfy
\begin{align}\label{eequ}
\begin{cases}\operatorname{div}\left(t^{1-2 \sigma} \nabla U_{i}\right)=0
& \text { in }\, \mathbb{R}_{+}^{4},\\
 -\lim_{t\rightarrow 0} t^{1-2 \sigma} \partial_{t} U_i(y, t)= \widetilde{K}_{i}(y)
  H_{i}(y)^{\tau_{i}} u_{i}(y)^{p_{i}} & \text { for }\, y\in\mathbb{R}^{3}.
 \end{cases}
 \end{align}
Propositions  \ref{prop5}, \ref{prop12}, \ref{prop8},
 and elliptic theory together imply that
\begin{align}
U_{i}(Y_{i}^{(j)}) U_{i}(Y) \rightarrow\mathcal{H}^{(j)}(Y)
:= \mathcal{A}^{(j)}|Y|^{-2}+\mathcal{W}^{(j)}(Y) \quad\text {in} \, C_{\mathrm{loc}}^{2}(\overline{\mathbb{R}_{+}^{4}}
\backslash \{ \cup_{\ell=1}^{k}Y^{(\ell)}\}),\label{1.41}
\end{align}
where $\mathcal{A}^{(j)}>0$ is a constant and
$\mathcal{W}^{(j)}(Y)$ satisfies
\begin{align}\label{1.40}
\begin{cases}\operatorname{div}(t^{1-2\sigma} \nabla \mathcal{W}^{(j)})=0 & \text {in} \, \mathbb{R}_{+}^{4}, \\ -\lim _{t \rightarrow 0} t^{1-2 \sigma} \partial_{t}
\mathcal{W}^{(j)}(y, t)=0 & \text {for}\, y \in \mathbb{R}^{3}.\end{cases}
\end{align}
It follows from the  maximum principle and  the Harnack inequality that
 \begin{align}\label{1.88}
 \wdt{\mathcal{W}}^{(j)}(Y)\equiv 0\quad \hbox{if} \, k=1,\quad
 \wdt{\mathcal{W}}^{(j)}(Y)>0\quad\hbox{if}\, k\geq 2.
 \end{align}

Let's next calculate $\mathcal{A}^{(j)}.$ Multiplying \eqref{eequ} by $U_i(Y_i^{(j)})$
and  integrating by parts
on $\mathcal{B}_1^{+}$
leads to
\begin{align}
0=&\int_{\mathcal{B}_{1}^{+}}U_{i}(Y_{i}^{(j)})\mathrm{div}(\nabla U_{i})\notag\\
=&
\int_{{B}_{1} } u_{i}(y_{i}^{(j)}) \wdt{K}_{i}(y) H_{i}(y)^{\tau_{i}}u_{i}(y)^{p_i}
+\int_{\partial''\mathcal{B}_{1}^{+}}\frac{\partial }{\partial \nu}(U_{i}(Y_{i}^{(j)})U_{i})
=:\mathcal{I}_{1}+\mathcal{I}_{2}.\label{1.21}
 \end{align}
Let  $R_{i}$ be given in Proposition \ref{prop6}, and
\begin{align}\label{rimij}
m_{ij}:=u_i(y_i^{(j)}), \quad
r_i:=R_i(m_{ij})^{-(p_i-1)}.
\end{align}
For $\mathcal{I}_{1},$ from Propositions \ref{prop6}
 and  \ref{prop12} we have
\begin{align}\label{1.62}
\mathcal{I}_{1}
=&
\int_{|y-y_i^{(j)}|\leq r_i}m_{ij}\wdt{K}_{i}(y)
u_i(y)^{p_i}
+\int_{\{|y-y_i^{(j)}|>r_{i}\}\cap\{|y|<1\} }m_{ij}\wdt{K}_{i}(y)H_{i}(y)^{\tau_{i}}
u_i(y)^{p_i}\notag\\
& \quad+O\Big(\tau_{i}\int_{|y-y_i^{(j)}
|\leq r_i}m_{ij}\wdt{K}_{i}(y)
u_i(y)^{p_i}\Big)\notag\\
=&
m_{ij}^{2\tau_{i}}\big(\wdt{K}_{i}(0)+O(|y|)\big)\int_{|x|\leq R_{i}}
\big(m_{ij}^{-1}u_i(m_{ij}^{-(p_{i}-1)}x+y_{i}^{(j)})\big)^{p_{i}}\notag\\
=&2\pi |\mathbb{S}^2|K_{i}(q^{(j)})^{-2}+o(1).
\end{align}
For $\mathcal{I}_{2},$ it follows from\eqref{1.41}  and \eqref{1.40} that
\begin{align}\label{1.20}
\lim_{i\rightarrow \infty}\mathcal{I}_{2}
&=
\int_{\partial ''\mathcal{B}_1^{+}}
\frac{\partial }{\partial \nu}( \mathcal{A}^{(j)}|Y|^{-2}+\mathcal{W}^{(j)}(Y))
=\int_{\partial ''\mathcal{B}_1^{+}}
-2\mathcal{A}^{(j)}
=-\frac{\pi|\mathbb{S}^2|}{2}\mathcal{A}^{(j)}.
\end{align}
By \eqref{1.21}, \eqref{1.62}, and \eqref{1.20}, we conclude that
$\mathcal{A}^{(j)}=4K(q^{(j)})^{-2}.$

From \eqref{1.41}, we have
\begin{align*}
U_{i}(Y_{i}^{(j)}) U_{i}(Y) \rightarrow\mathcal{H}^{(j)}(Y)
:= 4K(q^{(j)})^{-2}|Y|^{-2}+\mathcal{W}^{(j)}(Y) \quad\text {in} \, C_{\mathrm{loc}}^{2}(\overline{\mathbb{R}_{+}^{4}}
\backslash \{ \cup_{\ell=1}^{k}Y^{(\ell)}\}),
\end{align*}
and
\begin{align}\label{1.60}
\begin{aligned}
u_{i}(y_{i}^{(j)}) u_{i}(y) \rightarrow & h^{(j)}(y) \\
:=&  4K(q^{(j)})^{-2}|y|^{-2}+W^{(j)}(y)
\quad \text { in }\, C_{\mathrm{loc}}^{2}(\mathbb{R}^{3}
\backslash\{\cup_{\ell=1}^{k}y^{(\ell)}\}),
\end{aligned}
\end{align}
where  $W^{(j)}(y):=\mathcal{W}^{(j)}(y,0).$

By \eqref{1.3} and $y_{i}^{(j)}\rightarrow 0$ as $i\rightarrow \infty,$ we have
\begin{align*}
\lim_{i\rightarrow \infty}  v_{i}(q_{i}^{(j)}) v_{i}(q)
=\frac{1}{4}\lim_{i\rightarrow \infty}(1+|y|^2)u_{i}(y_{i}^{(j)}) u_{i}(y),
\end{align*}
combining with \eqref{1.60}, it easy to see that for $q \neq q^{(j)}$ and close to $q^{(j)},$
\begin{align}\label{1.4}
\lim _{i \rightarrow \infty} v_{i}(q_{i}^{(j)}) v_{i}(q)
=2G_{q^{(j)}}(q)K(q^{(j)})^{-2}
+\wdt{W}^{(j)}(q)\quad \mbox{in} \,C^{2}_{\mathrm{loc}}
(\mathbb{S}^3\backslash \{\cup_{\ell=1}^{k}q^{(\ell)}\}),
\end{align}
 where $\wdt{W}^{(j)}(q)$ is some regular function  on $\mathbb{S}^{3} \backslash$ $\cup_{\ell \neq j}\{q^{(\ell)}\}$
satisfying $P_{\sigma}\widetilde{W}^{(j)}=0,$
and  $G_{q^{(j)}}(q)$ is the Green function defined as in \eqref{gf}.

When $k\geq 2,$  taking into account the contribution of all
the poles, we deduce
\begin{align}\label{1.87}
\lim _{i \rightarrow \infty} v_{i}(q_{i}^{(j)}) v_{i}(q)
=
2\frac{G_{q^{(j)}}(q)}{K(q^{(j)})^{2}}
+2\sum_{\ell \neq j}
&\lim _{i \rightarrow \infty}
\frac{v_{i}(q_{i}^{(j)})}
{v_{i}(q_{i}^{(\ell)})}
\frac{G_{q^{(\ell)}}(q)}{K(q^{(\ell)})^{2}}\notag\\
&\quad \hbox{in} \,\,C^{2}_{\mathrm{loc}}
(\mathbb{S}^3\backslash \{\cup_{\ell=1}^{k}q^{(\ell)}\}).
\end{align}
In fact, subtracting all the poles from the limit function, we obtain
a regular function  $\wdt{W}_{0}: \mathbb{S}^3\rightarrow \mathbb{R}$
such that $P_{\sigma}\wdt{W}_{0}=0$ on $\mathbb{S}^3,$
so it must be $\wdt{W}_{0}\equiv 0.$

Using \eqref{1.87}, we have,  for $|y|> 0$ small,
\begin{align}\label{1.7}
h^{(j)}(y)=
\frac{4}{K(q^{(j)})^{2}|y|^{2}}+8
\sum_{\ell \neq j} \lim _{i \rightarrow \infty}
 \frac{v_{i}(q_{i}^{(j)})}{v_{i}(q_{i}^{(\ell)})}
\frac{G_{q^{(\ell)}}(q^{(j)})}{K(q^{(\ell)})^{2}}+O(|y|).
\end{align}
The conclusion obtained from the above is easy to see that \eqref{1.74} is true
and \eqref{1.86} can be obtained from Proposition \ref{prop12}.
We have proved Part (ii).

Before stating the result to be proved, we give the following estimates \eqref{1.5.1} and \eqref{1.5}.
Using \cite[Lemmas 4.13 and 4.14]{JLX}, we obtain
\begin{align}\label{1.5.1}
|\nabla K_{i}(y_{i}^{(j)})|
=O(u_{i}(y_{i}^{(j)})^{-1}), \quad \tau_{i}
=O(u_{i}(y_{i}^{(j)})^{-2}),
\end{align}
and from Propositions \ref{prop6}, \ref{prop12}, and \ref{prop13}, we get,
for sufficiently small $\delta>0,$
\begin{align}
\begin{aligned}\label{1.5}
&\sum\limits_{j=1}^{3}\Big|\int_{B_{\delta}} x_{j} u_{i}(y+y_{i}^{(j)})^{p_{i}+1}\Big|
=o(u_{i}(y_{i}^{(j)})^{-1}), \\
&\sum\limits_{j \neq \ell}\Big|\int_{B_{\delta}} x_{j} x_{\ell} u_{i}(y+y_{i}^{(j)})^{p_{i}+1}\Big|
=o(u_{i}(y_{i}^{(j)})^{-2}), \\
&\int_{\partial B_{\delta}} u_{i}(y+y_{i}^{(j)})^{p_{i}+1}
=O(u_{i}(y_{i}^{(j)})^{-p_{i}-1}), \\
&\lim \limits_{i \rightarrow \infty} u_{i}(y_{i}^{(j)})^{2}
\int_{B_{\delta}}|y|^{2} u_{i}(y+y_{i}^{(j)})^{p_{i}+1}
=6\pi|\mathbb{S}^2|K(q^{(j)})^{-5}.
\end{aligned}
\end{align}

Now we give the proof only for the last formula in \eqref{1.5}.
Let $m_{ij}$ and  $r_{i}$ be as in \eqref{rimij}.
Applying Propositions \ref{prop6}, \ref{prop12}, and  \ref{prop13},  we have
\begin{align*}
&m_{ij}^2\int_{|y|\leq \delta}|y|^{2} u_{i}(y+y_{i}^{(j)})^{p_{i}+1}\\
=&m_{ij}^2\int_{|y|\leq r_{i}} |y|^2 u_{i}(y+y_i^{(j)})^{p_{i}+1}
+
m_{ij}^2\int_{r_i<|y|\leq \delta}|y|^2 u_{i}(y+y_i^{(j)})^{p_{i}+1}\\
=&
m_{ij}^{4(2-p_i)}\int_{|x|\leq R_{i}}|x|^2 \Big(m_{ij}^{-1}u_{i}(m_{ij}^{-(p_i-1)}x+y_i^{(j)})\Big)^{p_{i}+1}\\
&\quad
+m_{ij}^2\int_{r_i\leq|x-y_i^{(j)}|\leq \delta}|x-y_{i}^{(j)}|^2u_i(x)^{p_i+1}\\
=&6\pi|\mathbb{S}^2|K(q^{(j)})^{-5}+o(1).
\end{align*}

For any $0<\delta<1,$ combining \eqref{1.5}  with Proposition \ref{prop13}, we can obtain
\begin{align*}
\tau_{i}^{2}\int_{B_{\delta}}
   \wdt{K}_{i}(y+y_{i}^{(j)})
   H_{i}(y+y_{i}^{(j)})^{\tau_{i}}
   u_{i}(y+y_{i}^{(j)})^{p_{i}+1}
   \leq  Cu_{i}(y_{i}^{(j)})^{-4}=o(u_i(y_{i}^{(j)})^{-2}),
\end{align*}
\begin{align*}
\tau_{i}
\int_{B_{\delta}}
   \langle y, \nabla(\wdt{K}_{i}(y+y_{i}^{(j)}) H_{i}(y+y_{i}^{(j)})^{\tau_{i}})
   \rangle
    u_{i}(y+y_{i}^{(j)})^{p_{i}+1}=o(u_i(y_{i}^{(j)})^{-2}),
\end{align*}
and
\begin{align*}
\tau_{i}
 \int_{\partial B_{\delta}}
    \wdt{K}_{i}(y+y_{i}^{(j)})
    H_{i}(y+y_{i}^{(j)})^{\tau_{i}}
    u_{i}(y+y_{i}^{(j)})^{p_{i}+1}=o(u_i(y_{i}^{(j)})^{-2}).
\end{align*}
Then using Proposition \ref{prop13} again, we have
\begin{align}\label{1.22}
&\frac{\tau_i}{3}
   \int_{B_{\delta}}
   \wdt{K}_{i}(y+y_{i}^{(j)})
   (H_{i}(y+y_{i}^{(j)})^{\tau_{i}}-1)
   u_{i}(y+y_{i}^{(j)})^{p_{i}+1}\notag\\
\leq&
C\tau_{i}^2\int_{B_{\delta}}
u_{i}(y)^{p_i+1}=o(u_{i}(y_i^{(j)})^{-2}).
\end{align}

The above estimates, Proposition \ref{prop11}, and \eqref{1.5}  yield,
 for any $0<\delta<1,$
\begin{align}
&\int_{\partial' \mathcal{B}_{\delta}^{+}} B'
(Y, U_{i}(Y+Y_{i}^{(j)}), \nabla U_{i}(Y+Y_{i}^{(j)}), \delta,\sigma)\notag\\
=&
  \int_{B_{\delta}} \wdt{K}_{i}(y+y_{i}^{(j)})
   H_{i}(y+y_{i}^{(j)})^{\tau_{i}}
   u_{i}(y+y_{i}^{(j)})^{p_{i}+1}\notag\\
   &+\int_{B_{\delta}}\langle y, \nabla u_{i}(y+y_{i}^{(j)})
   \rangle \wdt{K}_{i} (y+y_{i}^{(j)})
    H_{i}(y+y_{i}^{(j)})^{\tau_{i}}
   u_{i} (y+y_{i}^{(j)})^{p_{i}}\notag\\
=
 &-\frac{\tau_i}{3}
   \int_{B_{\delta}}
   \wdt{K}_{i}(y+y_{i}^{(j)})
   H_{i}(y+y_{i}^{(j)})^{\tau_{i}}
   u_{i}(y+y_{i}^{(j)})^{p_{i}+1}
 \notag\\
&-\frac{1}{3}
   \int_{B_{\delta}}
   \langle y, \nabla(\wdt{K}_{i}(y+y_{i}^{(j)}) H_{i}(y+y_{i}^{(j)})^{\tau_{i}})
   \rangle
    u_{i}(y+y_{i}^{(j)})^{p_{i}+1}\notag\\
&+\frac{\delta}{3}
    \int_{\partial B_{\delta}}
    \wdt{K}_{i}(y+y_{i}^{(j)})
    H_{i}(y+y_{i}^{(j)})^{\tau_{i}}
    u_{i}(y+y_{i}^{(j)})^{p_{i}+1}\notag\\
   & +o(u_{i}(y_i^{(j)})^{-2}) \notag\\
=:&\mathcal{J}_1+\mathcal{J}_{2}+\mathcal{J}_{3}+o(u_{i}(y_i^{(j)})^{-2}),\label{1.42}
\end{align}
where in the  first equality, we take advantage of the fact that
the Taylor expansion:
\begin{align*}
\frac{1}{p_i+1}=\frac{1}{3-\tau_{i}}=\frac{1}{3(1-\tau_i/3)}
=\frac{1}{3}\Big(1+\frac{\tau_i}{3}+O(\tau_i^2)\Big).
\end{align*}

It follows  from  \eqref{1.5}, \eqref{1.22},
Proposition \ref{prop6}, and Proposition \ref{prop13} that
\begin{align}
\mathcal{J}_1=&
-\frac{\tau_i}{3}
   \int_{B_{\delta}}
   \wdt{K}_{i}(y+y_{i}^{(j)})
   u_{i}(y+y_{i}^{(j)})^{p_{i}+1}+o(m_{ij}^{-2})\notag\\
=&-\frac{\tau_{i}}{3}
\frac{2^3}{K(q^{(j)})^{2}}
\int_{\mathbb{R}^3}  \frac{1}{(1+|z|^2)^3}+o(m_{ij}^{-2})\notag\\
=&-\frac{\pi|\mathbb{S}^2|}{6}\frac{\tau_{i}}{K(q^{(j)})^2}
+o(m_{ij}^{-2}).\label{1.44}
\end{align}
Applying Proposition \ref{prop13} and \eqref{1.5}, we conclude that
\begin{align}
\mathcal{J}_2=&
-\frac{1}{3}
   \int_{B_{\delta}}
   \langle
   y, \nabla\big(\wdt{K}_{i}(y+y_{i}^{(j)}) H_{i}(y+y_{i}^{(j)})^{\tau_{i}}\big)
   \rangle
    u_{i}(y+y_{i}^{(j)})^{p_{i}+1}+o(m_{ij}^{-2})\notag\\
 =&
 -\frac{1}{3}
 \int_{B_{\delta}}
   \langle y, \nabla(\wdt{K}_{i}(y+y_{i}^{(j)})) H_{i}(y+y_{i}^{(j)})^{\tau_{i}}
 \rangle
    u_{i}(y+y_{i}^{(j)})^{p_{i}+1}\notag\\
   & \quad-\frac{1}{3}\int_{B_{\delta}}
   \langle y, \wdt{K}_{i}(y+y_{i}^{(j)}) \nabla(H_{i}(y+y_{i}^{(j)})^{\tau_{i}})
   \rangle
    u_{i}(y+y_{i}^{(j)})^{p_{i}+1}+o(m_{ij}^{-2})\notag\\
 =&
 -\frac{1}{3}
 \sum\limits_{\ell}\int_{B_{\delta}}
 y_{\ell} \frac{\partial \wdt{K}_{i}}{\partial y_{\ell}}(y+y_{i}^{(j)})
    u_{i}(y+y_{i}^{(j)})^{p_{i}+1}+o(m_{ij}^{-2})\notag\\
 =&
 -\frac{1}{3}\int_{B_{\delta}}
  y\cdot \nabla \wdt{K}_{i}(y_{i}^{(j)})u_{i}(y+y_{i}^{(j)})^{p_{i}+1}\notag\\
  &\quad-\frac{1}{3}\sum\limits_{\ell,m}
  \int_{B_{\delta}}
  y_{\ell}y_{m}\frac{\partial^2\wdt{K}_{i}}{\partial y_{\ell}\partial y_{m}}(y_{i}^{(j)})
  u_{i}(y+y_{i}^{(j)})^{p_{i}+1}+o(m_{ij}^{-2})\notag\\
 =&
 -\frac{1}{9}\Delta \wdt{K}(0)
   \int_{B_{\delta}}|y|^2u_{i}(y+y_{i}^{(j)})^{p_{i}+1}+o(m_{ij}^{-2})\notag\\
  = &
  -\frac{4}{9}\Delta_{g_0} K(q^{(j)})
   \int_{B_{\delta}}|y|^2u_{i}(y+y_{i}^{(j)})^{p_{i}+1}+o(m_{ij}^{-2}),\label{1.43}
\end{align}
where we  used the definition of the Laplace-Beltrami operator in the last equality.
By \eqref{1.5}, we get
\begin{align}
\mathcal{J}_{3}=\frac{\delta}{3}
    \int_{\partial B_{\delta}}
    \wdt{K}_{i}(y+y_{i}^{(j)})
    H_{i}(y+y_{i}^{(j)})^{\tau_{i}}
    u_{i}(y+y_{i}^{(j)})^{p_{i}+1}
    =o(m_{ij}^{-2}).\label{1.45}
\end{align}

It follows  from Proposition \ref{prop11} and \eqref{1.42}--\eqref{1.45} that
\begin{align}
&\int_{\partial'' \mathcal{B}_{\delta}^{+}} B''(Y, U_{i}(Y+Y_{i}^{(j)}),
\nabla U_{i}(Y+Y_{i}^{(j)}), \delta, \sigma)\notag\\
=&-\int_{\partial' \mathcal{B}_{\delta}^{+}} B'
(Y, U_{i}(Y+Y_{i}^{(j)}), \nabla U_{i}(Y+Y_{i}^{(j)}), \delta,\sigma)\notag\\
=&\frac{\pi|\mathbb{S}|^2}{6}\frac{\tau_{i}}{K(q^{(j)})^2}
 +\frac{4}{9}\Delta_{g_{0}} K(q^{(j)})
 \int_{B_{\delta}}|y|^2u_{i}(y+y_{i}^{(j)})^{p_{i}+1}
   +o(m_{ij}^{-2}).\label{1.61}
\end{align}
By \eqref{1.3} and the definition of $\mu^{(j)}$, we have
\begin{align*}
\mu^{(j)}=\lim_{i\rightarrow \infty}\tau_i
v_{i}(q_{i}^{(j)})^2
=\lim_{i\rightarrow\infty}\frac{1}{4}\tau_{i}u_{i}(y_{i}^{(j)})^2.
\end{align*}
Thus,
multiplying \eqref{1.61} by $U_{i}(Y_{i}^{(j)})^{2}$ and sending $i$ to $\infty$,
and  using Proposition \ref{prop11} and \eqref{1.5}, we conclude  that
\begin{align*}
&\int_{\partial'' \mathcal{B}_{\delta}^{+}} B''
(Y, \mathcal{H}^{(j)}(Y+Y_{i}^{(j)}), \nabla \mathcal{H}^{(j)}(Y+Y_{i}^{(j)}), \delta,\sigma)\\
=&\frac{8\pi|\mathbb{S}^2|\Delta_{g_0} K(q^{(j)})}{3K(q^{(j)})^5}
+\frac{2\pi|\mathbb{S}^2|\mu^{(j)}}{3K(q^{(j)})^2}.
\end{align*}
Let $\delta\rightarrow 0$, it follows from Proposition \ref{prop2} that
\begin{align}\label{1.9}
\begin{aligned}
\frac{8\pi|\mathbb{S}^2|\Delta_{g_0} K(q^{(j)})}{3K(q^{(j)})^5}
+\frac{2\pi|\mathbb{S}^2|\mu^{(j)}}{3K(q^{(j)})^2}
=-\frac{2\pi|\mathbb{S}^2|W^{(j)}(0)}{K(q^{(j)})^{2}}.
\end{aligned}
\end{align}
Consequently,  we have $q^{(j)} \in \mathscr{K} \backslash \mathscr{K}^{+},$ $\forall \,1 \leq j \leq k,$
 and when $k \geq 2,$ $q^{(j)} \in \mathscr{K}^{-},$  $ \forall \,1 \leq j \leq k.$

It is easy to see that \eqref{1.6} follows from \eqref{1.88} and \eqref{1.9} when $k=1$.
When $k \geq 2,$
by \eqref{1.7} we have
\begin{align}\label{1.8}
W^{(j)}(0)=8
\sum_{\ell \neq j} \frac{\lambda_{\ell}}{\lambda_{j}}
\frac{G_{q^{(\ell)}}(q^{(j)})}{K(q^{(j)})K(q^{(\ell)})},\quad \forall\, 1\leq j\leq k.
\end{align}
Substituting \eqref{1.8} into \eqref{1.9}, we  get
\begin{align*}
-6\sum_{\ell \neq j}
\frac{G_{q^{(\ell)}}(q^{(j)})}{K(q^{(j)})K(q^{(\ell)})}\lambda_{\ell}
-\frac{\Delta_{g_0} K(q^{(j)})}{K(q^{(j)})^3}\lambda_{j}
=\frac{1}{4}\lambda_{j}\mu^{(j)}.
\end{align*}
We have established \eqref{1.10} and thus verified Part (iii).

We claim that there exists some
\begin{align}\label{eig}
\eta=(\eta_{1},\cdots,\eta_{k})\neq 0\quad
\hbox{with}\quad\eta_{\ell}\geq 0,  \,\forall\, \ell=1,\cdots,k,
 \end{align}
such that
\begin{align*}
\sum_{\ell=1}^{k} M_{\ell j}(q^{(1)},
 \cdots, q^{(k)}) \eta_{\ell}=\mu(M) \eta_{j}, \quad \forall\, j=1,\cdots,k.
\end{align*}
Indeed, choose $\Lambda>\max_{i}M_{ii},$ then the matrix $\Lambda I-M$ is a positive
matrix (see \cite{HJ} for the definition), where $I$ denotes the unit matrix.
The claim follows from \cite[Theorem 8.2.2]{HJ}.

Multiplying \eqref{1.10} by $\eta_{j}$
and summing over $j,$  and using Part (ii) and  \eqref{eig}, we have
\begin{align}\label{1.84}
\mu(M) \sum_{j} \lambda_{j} \eta_{j}
=\sum_{\ell, j} M_{\ell j} \lambda_{\ell} \eta_{j}
=\frac{1}{4} \sum_{j} \lambda_{j} \eta_{j} \mu^{(j)}\geq 0.
\end{align}
It follows that $\mu(M)\geq 0$. We have verified Part (i).
 Part (iv) follows from Parts (i)--(iii). The proof of Theorem \ref{thm1} is completed.
\end{proof}

Using Theorem \ref{thm1}, we can give the proof of Theorem \ref{thm2.1}.
\begin{proof}[Proof of Theorem \ref{thm2.1}]
We first  prove the upper bounds in \eqref{1.93}.
Suppose this assertion of the theorem  is false. Then we can find
 that there exists $K_{i} \rightarrow K$ in $C^{2}(\mathbb{S}^{3})$ such that
$\max _{\mathbb{S}^{3}} v_{i} \rightarrow \infty$ for some $v_{i} \in \mathscr{M}_{K_{i}}.$
 Theorem \ref{thm1} shows that $\{v_{i}\}$ has only isolated simple
blow up points $\{q^{(1)}, \cdots, q^{(k)}\}.$
It follows from \cite[Theorem 5.5]{JLX}  that $k>1.$
Using Part (i) of Theorem \ref{thm1}, we obtain $q^{(i)},\cdots, q^{(k)}\in \mathscr{K}^{-}.$

Applying Theorem \ref{thm1} with $\tau_{i}=0,$
we deduce  that $q^{(1)}, \cdots, q^{(k)} \in \mathscr{K}^{-}$ and
 for all $1 \leq j \leq k,$ $\sum_{\ell=1}^{k} M_{\ell j} \lambda_{\ell}=0,$
 where $\lambda_{\ell}>0,$ $\ell=1, \cdots, k.$

Analysis similar to that in the proof of Theorem \ref{thm1} shows
that $\mu(M)$ has at least one nonnegative eigenvector $\eta=(\eta_{1},\cdots,\eta_{k})$ as in \eqref{eig},
then we have
\begin{align*}
\mu(M) \sum_{j} \lambda_{j} \eta_{j}
=\sum_{\ell, j} M_{\ell j} \lambda_{\ell} \eta_{j}
=0.
\end{align*}
It follows that $\mu(M)=0.$  This leads to a contradiction with $K \in \mathscr{A}.$
Then by the Harnack inequality in \cite[Lemma 4.3]{JLX} and
Schauder estimates in \cite[Theorem 2.11]{JLX}, we
complete the proof of  Theorem \ref{thm2.1}.
\end{proof}

\section{The existence results on $\mathbb{S}^3$}\label{sec4}

In this section, we first prove Theorem \ref{thm4},  which is about the degree-counting formula and the existence
of the solutions. Before that, we prove that as $\tau \rightarrow 0^{+},$ the solutions to
 the subcritical equation (see \eqref{subequ} below)  either stay bounded and
converge  to the solutions to critical equations \eqref{1.71} in $C^{2}$ norm or become unbounded and blow up at finite points.

Then by using Theorem \ref{thm4} and perturbing the prescribing function near its critical point,
we can know  exactly where the blow up occur when $K\notin \mathscr{A}.$
From Theorem \ref{thm1} and the proof of Theorem \ref{thm2}, we show Theorem \ref{thm5} holds.
\subsection{On the case of subcritical equations}
In this subsection,  we consider the following  subcritical equation:
\begin{align}\label{subequ}
P_{\sigma}v=Kv^{2-\tau}\quad \hbox{on} \,\, \mathbb{S}^3,
\end{align}
where $K\in C^{2}(\mathbb{S}^3)$ and $\tau>0.$

Denote the  $H^{\sigma}(\mathbb{S}^3)$ inner product and norm by
\begin{align*}
\langle u, v\rangle=\int_{\mathbb{S}^{n}}(P_{\sigma} u) v,
\quad\|u\|_{\sigma}=\sqrt{\langle u, u\rangle}.
\end{align*}
The  Euler-Lagrange functional associated  with \eqref{subequ} is
\begin{align}\label{fun}
I_{\tau}(v)=\frac{1}{2}\int_{\mathbb{S}^3}(P_{\sigma} v) v-\frac{1}{3-\tau}\int_{\mathbb{S}^3}K|v|^{3-\tau},\quad \forall\, v\in H^{\sigma}(\mathbb{S}^3).
\end{align}

\begin{definition}\label{1.89}
Let $K\in C^{2}(\mathbb{S}^3),$ $\mathscr{K}^{-}$ be as in \eqref{1.65} and $k\in \mathbb{N}_{+}.$
Let $\overline{P}_{1}, \cdots, \overline{P}_{k} \in \mathscr{K}^{-}$ be the critical points
of $K$ with $\mu(M(\overline{P}_{1}, \cdots, \overline{P}_{k}))>0$
and  $\varepsilon_{0}>0$ be sufficiently small. Define
\begin{align*}
\begin{aligned}
\Omega_{\varepsilon_{0}}=&\Omega_{\varepsilon_{0}}
(\overline{P}_{1}, \cdots, \overline{P}_{k}) \\
=&\left\{(\alpha, t, P) \in \mathbb{R}_{+}^{k} \times
\mathbb{R}_{+}^{k} \times(\mathbb{S}^{3})^{k}:|\alpha_{i}-1/K(P_{i})|<\varepsilon_{0},\right.\\
&\quad\left. t_{i}>1/\varepsilon_{0},\, |P_{i}-\overline{P}_{i}|<\varepsilon_{0},
1 \leq i \leq k\right\}.
\end{aligned}
\end{align*}
\end{definition}

It is well known  that for $P\in \mathbb{S}^3$ and $t>0,$
\begin{align}\label{delta}
\delta_{P, t}(x):=\frac{t}
{1+\frac{t^{2}-1}{2}(1-\cos\, d(x, P))},\quad \forall\, x\in \mathbb{S}^3
\end{align}
 is a family of positive solutions to
\begin{align}\label{1.51}
P_{\sigma} v=v^2, \quad v>0 \quad \text { on }\, \mathbb{S}^{3},
\end{align}
where $d(\cdot\,, \,\cdot )$ is the distance induced by the standard metric of $\mathbb{S}^{3}.$

Using the idea introduced in  \cite{Chti1} and
 \cite{Ba}, we have the following lemma.
 \begin{lemma}
 Let $\varepsilon_{0}$ be sufficiently small and $\Omega_{\varepsilon_{0}}=\Omega_{\varepsilon_{0}}(\overline{P}_{1}, \cdots, \overline{P}_{k})$ be as in Definition \ref{1.89}. For any $u\in H^{\sigma}(\mathbb{S}^3)$
satisfying the inequality
\begin{align*}
\Big\|u-\sum_{i=1}^k\wdt{\alpha}_{i}\delta_{\wdt{P}_{i},\wdt{t}_{i}}
\Big\|_{\sigma}<\frac{\varepsilon_{0}}{2}
\end{align*}
 for some $(\wdt{\alpha},\wdt{t},\wdt{P})\in \Omega_{\varepsilon_0/2},$
then there exists a unique $(\alpha,t,P)\in \Omega_{\varepsilon_{0}}$ such that
\begin{align*}
u=\sum_{i=1}^{k}\alp_{i}\delta_{P_{i},t_{i}}+v,
\end{align*}
with $v$
satisfies
 \begin{align}\label{1.12}
\langle v, \delta_{P_{i}, t_{i}}\rangle=\big\langle v, \frac{\partial \delta_{P_{i}, t_{i}}}{\partial P_{i}^{(\ell)}}\big\rangle=\big\langle v, \frac{\partial \delta_{P_{i}, t_{i}}}{\partial t_{i}}\big\rangle=0,
\end{align}
where $\frac{\partial}{\partial P_{i}^{(\ell)}}$ denotes the corresponding derivatives.
\end{lemma}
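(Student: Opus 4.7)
The plan is to obtain the decomposition via the quantitative implicit function theorem applied to the map that records the orthogonality defects of the remainder against the tangent space to the bubble family. Set $v(\alpha,t,P):=u-\sum_{j=1}^{k}\alpha_j\delta_{P_j,t_j}$ and define $\Phi:\overline{\Omega_{\varepsilon_{0}}}\to\R^{5k}$ by
\begin{equation*}
\Phi(\alpha,t,P)=\Big(\langle v,\delta_{P_i,t_i}\rangle,\ \big\langle v,\tfrac{\partial\delta_{P_i,t_i}}{\partial P_i^{(\ell)}}\big\rangle,\ \big\langle v,\tfrac{\partial\delta_{P_i,t_i}}{\partial t_i}\big\rangle\Big)_{1\le i\le k,\,1\le\ell\le 3}.
\end{equation*}
Finding $(\alpha,t,P)\in\Omega_{\varepsilon_{0}}$ for which the remainder satisfies \eqref{1.12} is equivalent to solving $\Phi(\alpha,t,P)=0$.

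The first step is smallness of $\Phi$ at the base point. Since $\|u-\sum_{i}\wdt{\alpha}_{i}\delta_{\wdt{P}_{i},\wdt{t}_{i}}\|_{\sigma}<\varepsilon_{0}/2$ and each of the $5k$ vectors $\delta_{P_i,t_i}$, $\partial_{P_i^{(\ell)}}\delta_{P_i,t_i}$, $\partial_{t_i}\delta_{P_i,t_i}$ has $H^\sigma$-norm bounded uniformly on $\Omega_{\varepsilon_{0}}$ (from the explicit formula \eqref{delta}, noting that $t_i\ge 1/\varepsilon_0$ stays large), Cauchy--Schwarz yields $\|\Phi(\wdt\alpha,\wdt t,\wdt P)\|=O(\varepsilon_{0})$.

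The main step is invertibility of the differential $D\Phi$. Differentiating $\Phi$ in $(\alpha,t,P)$, the dominant contribution is the negative of the $5k\times 5k$ Gram matrix $G$ of the family $\{\delta_{P_i,t_i},\,\partial_{P_i^{(\ell)}}\delta_{P_i,t_i},\,\partial_{t_i}\delta_{P_i,t_i}\}_{i,\ell}$ under $\langle\cdot,\cdot\rangle$, while the remaining terms pair with $v$ and are therefore of size $O(\varepsilon_{0})$. Standard bubble-interaction estimates on $\mathbb{S}^3$ --- based on the decay $\delta_{P,t}(q)=O(t^{-1}(1-\cos d(q,P))^{-1})$ for fixed $d(q,P)>0$ together with analogous bounds for the derivatives --- show that each cross-bubble block ($i\neq j$) of $G$ is $o(1)$ as $\varepsilon_{0}\to 0$, since the $\overline{P}_{i}$ are distinct and $t_i\ge 1/\varepsilon_0$. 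The diagonal single-bubble $5\times 5$ blocks reduce, via the stereographic projection used throughout Section \ref{sec3}, to the Gram matrix of the amplitude, translation and dilation modes of the standard $\R^3$ bubble under the $\dot H^\sigma$ pairing; these blocks are invertible by the non-degeneracy of ground states for $(-\Delta)^{\sigma}U=U^{2}$, whose kernel is exactly spanned by these modes.

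Thus, for $\varepsilon_{0}$ sufficiently small, $D\Phi$ is uniformly invertible on $\Omega_{\varepsilon_{0}}$, and the quantitative implicit function theorem applied at $(\wdt\alpha,\wdt t,\wdt P)$ produces a unique zero $(\alpha,t,P)\in\Omega_{\varepsilon_{0}}$, which furnishes the desired decomposition \eqref{1.12}. The main obstacle is the second step: verifying that the $o(1)$ bounds on the off-diagonal blocks of $G$ are sharp enough to keep the near-block-diagonal $G$ uniformly invertible in the operator norm, while simultaneously controlling the $v$-dependent lower-order remainder in $D\Phi$ so the IFT applies on the full set $\Omega_{\varepsilon_{0}}$ (and not merely a strictly smaller neighborhood). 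These are by now standard but delicate asymptotics already exploited in \cite{Chti1, Ba}.
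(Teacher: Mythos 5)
The paper does not actually prove this lemma; it simply invokes the parametrization results of \cite{Chti1} and \cite{Ba}, so your implicit-function-theorem argument is being compared against the standard proof in those references rather than against anything in the text. Your overall strategy (solve $\Phi=0$ via a quantitative IFT, using nondegeneracy of the single bubble for the diagonal blocks and decay of cross-bubble interactions for the off-diagonal blocks) is indeed the standard route and is sound in outline. The uniqueness in all of $\Omega_{\varepsilon_0}$, not just in an IFT neighborhood, also deserves a sentence (it follows from the uniform invertibility of $D\Phi$ along the segment joining two putative solutions), but that is minor.

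There is, however, one concrete error in your second step. You assert that each of the $5k$ vectors $\delta_{P_i,t_i}$, $\partial_{P_i^{(\ell)}}\delta_{P_i,t_i}$, $\partial_{t_i}\delta_{P_i,t_i}$ has $H^{\sigma}$-norm bounded uniformly on $\Omega_{\varepsilon_0}$, and consequently that the Gram matrix $G$ is uniformly invertible in operator norm. This is contradicted by the paper's own Appendix: \eqref{p1p1} gives $\langle \partial_{P_1^{(\ell)}}\delta_{P_1,t_1},\partial_{P_1^{(\ell)}}\delta_{P_1,t_1}\rangle=\Gamma_1 t_1^{2}$ and \eqref{part11} gives $\langle \partial_{t_1}\delta_{P_1,t_1},\partial_{t_1}\delta_{P_1,t_1}\rangle=\Gamma_2 t_1^{-2}$, so the translation modes blow up like $t_i\ge 1/\varepsilon_0$ while the dilation modes degenerate like $t_i^{-1}$. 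As written, $G$ has diagonal entries of orders $1$, $t_i^{2}$ and $t_i^{-2}$, its condition number is unbounded, and both the smallness estimate $\|\Phi(\wdt\alpha,\wdt t,\wdt P)\|=O(\varepsilon_0)$ and the "uniform invertibility of $D\Phi$" fail in the form you state them. The repair is standard but must be made explicit: replace the raw derivatives by the normalized modes $t_i^{-1}\partial_{P_i^{(\ell)}}\delta_{P_i,t_i}$ and $t_i\,\partial_{t_i}\delta_{P_i,t_i}$ (equivalently, use the rescaled parameters $t_iP_i$ and $\log t_i$), after which the Gram matrix is uniformly non-degenerate, the defect vector is genuinely $O(\varepsilon_0)$, and one must then check that the resulting corrections ($|\delta P_i|=O(\varepsilon_0/t_i)$, $|\delta t_i|/t_i=O(\varepsilon_0)$) keep $(\alpha,t,P)$ inside $\Omega_{\varepsilon_0}$. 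With that normalization inserted, your argument goes through.
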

We denote the set of $v \in H^{\sigma}(\mathbb{S}^{3})$ satisfying \eqref{1.12} by $E_{P,t}.$
In what follows, we work in some orthonormal basis near $\{\overline{P}_{1},\cdots,\overline{P}_{k}\}.$
\begin{definition}
Let $A$ be sufficiently large, $\varepsilon_{0}, \nu_{0}>0$ be sufficiently small,
$k\in \mathbb{N}_{+},$
and $\Omega_{\varepsilon_{0}/2}=\Omega_{\varepsilon_{0}/2}(\overline{P}_{1},\cdots,\overline{P}_{k})$
be as in Definition \ref{1.89}. Define
\begin{align}\label{stau}
\begin{aligned}
& \Sigma_{\tau}(\overline{P}_{1}, \cdots, \overline{P}_{k}) \\
=&\left\{(\alpha, t, P, v)
\in \Omega_{\varepsilon_{0}/2}\times H^{\sigma}(\mathbb{S}^{3}):\right.\\
&\quad\left.|P_{i}-\overline{P}_{i}|<\tau^{1/2}|\log \tau|,\,
A^{-1} \tau^{-1 / 2}<t_{i}<A \tau^{-1 / 2},\, v \in E_{P,t},\,\|v\|<\nu_{0}\right\}.
\end{aligned}
\end{align}
Without confusion we use the same notation for
$$
\Sigma_{\tau}(\overline{P}_{1},\cdots,\overline{P}_{k})=\Big\{u=\sum_{i=1}^{k} \alpha_{i} \delta_{P_{i}, t_{i}}+v:(\alpha, t, P, v)
 \in \Sigma_{\tau}\Big\} \subset H^{\sigma}(\mathbb{S}^{3}).
$$
\end{definition}

\begin{remark}
Due to  Theorem \ref{thm2.1}, we only need to prove Theorem \ref{thm4}
for $K\in \mathscr{A}$ being a Morse function. Once this is achieved,  we also prove that
the $\mathrm{Index}$ as in Definition \ref{defn1.2} is well defined on $\mathscr{A}.$
\end{remark}

Using blow up analysis, we first give the necessary conditions on blowing up solutions to \eqref{subequ} when $\tau$ tends to $0.$
\begin{proposition}\label{prop3}
Let $K \in \mathscr{A}$ be a  Morse function and $\mathscr{K}^{-}$ be as in \eqref{1.65}.
 Then for any $\alpha \in (0,1),$ there exists some
positive constants $\varepsilon_{0},\nu_{0} \ll 1,$ and  $A, R \gg 1$ depending only on $K,$ such that
 when $\tau>0$ is sufficiently small,  for all u satisfying $u \in H^{\sigma}
(\mathbb{S}^{3}),$ $u>0,$ $I_{\tau}^{\prime}(u)=0,$ we have
 \begin{align*}
u \in \mathscr{O}_{R} \cup\{\cup_{k \geq 1}
\cup_{\overline{P}_{1}, \cdots, \overline{P}_{k} \in \mathscr{K}^{-},
\mu(M(\overline{P}_{1}, \cdots, \overline{P}_{k}))>0}
 \Sigma_{\tau}(\overline{P}_{1}, \cdots, \overline{P}_{k})\},
 \end{align*}
where $I_{\tau}'(u)$ is as in \eqref{subequ}, $\mathscr{O}_{R}$ is as in \eqref{1.82}
and $\Sigma_{\tau}(\overline{P}_{1},\cdots,\overline{P}_{k})$ is as in \eqref{stau}.
\end{proposition}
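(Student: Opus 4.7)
\textbf{Proof sketch of Proposition \ref{prop3}.} I argue by contradiction. Suppose the conclusion fails, so along some sequence $\tau_i\to 0^+$ there exist positive solutions $u_i$ of $P_\sigma u_i=Ku_i^{2-\tau_i}$ with $u_i\notin \mathscr{O}_R$ and $u_i\notin\Sigma_{\tau_i}(\overline{P}_1,\ldots,\overline{P}_k)$ for every admissible tuple. The plan is to show that each $u_i$ (for large $i$) must in fact belong to one of these sets.

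The bounded alternative is quickly dispatched: if $\max_{\mathbb{S}^3}u_i\le C$ on a subsequence, the Harnack inequality of \cite[Lemma~4.3]{JLX} gives $u_i\ge 1/C$, and Schauder estimates from \cite[Theorem~2.11]{JLX} yield $\|u_i\|_{C^{2,\alpha}}\le C'$, so $u_i\in\mathscr{O}_R$ once $R$ is large, a contradiction. Hence I may assume $\max_{\mathbb{S}^3}u_i\to\infty$ and apply Theorem \ref{thm1} with $K_i\equiv K$ and $p_i=2-\tau_i$. After extracting a subsequence this produces isolated simple blow-up points $q^{(1)},\ldots,q^{(k)}\in\mathscr{K}\setminus\mathscr{K}^+$, local maxima $q_i^{(j)}\to q^{(j)}$, and constants $\lambda_j\in(0,\infty)$, $\mu^{(j)}\in[0,\infty)$, with $\mu(M(q^{(1)},\ldots,q^{(k)}))\ge 0$. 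Since $K\in\mathscr{A}$ imposes $\Delta_{g_0}K\ne 0$ on $\mathscr{K}$, every $q^{(j)}\in\mathscr{K}^-$; since $\mathscr{A}$ also forbids $\mu(M)=0$ on any configuration of $k\ge 2$ points in $\mathscr{K}^-$, and the case $k=1$ gives $\mu(M)=-\Delta_{g_0}K(q^{(1)})/K(q^{(1)})^3>0$ automatically, we deduce $\mu(M)>0$ in every case. Theorem \ref{thm1}(iv) then promotes each $\mu^{(j)}$ to $(0,\infty)$, so by \eqref{1.74} we obtain
\[ u_i(q_i^{(j)}) = (1+o(1))\bigl(\mu^{(j)}/\tau_i\bigr)^{1/2}. \]

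Next I convert this qualitative picture into the bubble decomposition defining $\Sigma_{\tau_i}$. Set $P_i^{(j)}:=q_i^{(j)}$ and $t_i^{(j)}:=u_i(q_i^{(j)})$; the peak identity $\delta_{P,t}(P)=t$ from \eqref{delta} matches the heights, and the display above yields $A^{-1}\tau_i^{-1/2}<t_i^{(j)}<A\tau_i^{-1/2}$ for a fixed $A$. Because $K$ is Morse at $q^{(j)}$, the gradient estimate \eqref{1.5.1} together with the inequality $|\nabla K(y)|\ge c|y-q^{(j)}|$ near the nondegenerate critical point gives
\[ |q_i^{(j)}-q^{(j)}|=O\bigl(|\nabla K(q_i^{(j)})|\bigr)=O\bigl(u_i(q_i^{(j)})^{-1}\bigr)=O(\tau_i^{1/2}), \]
well within the window $\tau_i^{1/2}|\log\tau_i|$. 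The $C^2_{\mathrm{loc}}$ convergence of a rescaling of $u_i$ to the standard bubble of parameter $k_\infty=K(q^{(j)})^2/4$ (Proposition \ref{prop6} with $\sigma=1/2$) forces the coefficient $\alpha_i^{(j)}=1/K(q^{(j)})+o(1)$. Write $u_i=\sum_{j=1}^k\alpha_i^{(j)}\delta_{P_i^{(j)},t_i^{(j)}}+w_i$ and adjust $(\alpha_i^{(j)},P_i^{(j)},t_i^{(j)})$ by an implicit-function argument at the bubble tree so that $w_i\in E_{P_i,t_i}$; the parameter Jacobian is nondegenerate since the three families of derivatives of $\delta_{P,t}$ are linearly independent modulo the bubble. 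The smallness $\|w_i\|_\sigma\to 0$ then follows by combining the asymptotic \eqref{1.60} away from the $q^{(j)}$ with the sharp pointwise bound $u_i(y)\le Cu_i(q_i^{(j)})^{-1}|y-q_i^{(j)}|^{-2}$ of Proposition \ref{prop12} inside small balls. Hence $u_i\in\Sigma_{\tau_i}(q^{(1)},\ldots,q^{(k)})$ for all large $i$, a contradiction.

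The main obstacle is the quantitative bubble decomposition in the final step: simultaneously fixing the exact concentration rate $t_i^{(j)}\sim\tau_i^{-1/2}$, the blow-up location $|P_i^{(j)}-q^{(j)}|\ll\tau_i^{1/2}|\log\tau_i|$, and the $H^\sigma$-smallness of the remainder on the orthogonal slice $E_{P_i,t_i}$. The identification of the concentration rate is the sharpest point and rests on Part (iv) of Theorem \ref{thm1}, which itself is a consequence of the Pohozaev identity (Propositions \ref{prop11}--\ref{prop2}); the hypothesis $K\in\mathscr{A}$ is essential here, since it is precisely what rules out $\mu(M)=0$ and therefore guarantees every $\mu^{(j)}>0$, preventing $t_i^{(j)}$ from blowing up faster than $\tau_i^{-1/2}$ and landing outside the $\Sigma_{\tau_i}$ window.
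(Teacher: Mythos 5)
Your proposal is correct and follows essentially the same route as the paper: the bounded/unbounded dichotomy, then Theorem \ref{thm1} combined with $K\in\mathscr{A}$ to place the blow-up points in $\mathscr{K}^{-}$ with $\mu(M)>0$, and finally the bubble decomposition extracted from Propositions \ref{prop6}, \ref{prop12} and the estimate \eqref{1.5.1}. You actually supply more detail than the paper does on the last step (the rate $t_i\sim\tau^{-1/2}$ via Part (iv), the location bound from the Morse nondegeneracy, and the projection onto $E_{P,t}$), all of which the paper compresses into a one-line citation.
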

\begin{proof}
For any $\tau>0$  sufficiently small, let $u_{\tau}\in H^{\sigma}(\mathbb{S}^3),$ $u_{\tau}>0$
be a critiacl point of $I_{\tau}(u).$ If $u_{\tau}$ is uniformly bounded,
then there exists a $R>0$ such that $u_{\tau}\in \mathscr{O}_{R},$
and the proof is now completed.
If not,   there exists $\tau_{i}\rightarrow 0$ such that $u_{\tau_{i}}\rightarrow \infty.$
It follows from Theorem \ref{thm1} and $K\in \mathscr{A}$ that
there exists a constant $\delta^{*}>0$ such that
$\{u_{\tau_{i}}\}$ has only isolated simple blow up points
$q^{(1)},\cdots, q^{(k)}\in \mathscr{K}^{-},$ with $|q^{(j)}-q^{(\ell)}|\geq \delta^{*}
,$ $\forall j\ne \ell,$ and $\mu (q^{(1)},\cdots, q^{(k)})>0.$
Then Proposition \ref{prop3} can be deduced from Propositions \ref{prop8}, \ref{prop5},
\ref{prop12},
and elliptic theory.
\end{proof}

Now we are going to show that if $K\in \mathscr{A}$ is a Morse function,
 one can construct solutions highly concentrating at arbitrary points $q^{(1)}, \cdots, q^{(k)} \in \mathscr{K}^{-}$ provided $\mu(M(q^{(1)}, \cdots, q^{(k)}))>0.$

\begin{theorem}\label{thm3}
Let $ K \in \mathscr{A}$ be a Morse function and $\mathscr{K}^{-}$ be as in \eqref{1.65}.
Let $\tau, \varepsilon_{0},\nu_{0}>0$ be sufficiently  small,
$A>0$ be sufficiently  large and $k\in \mathbb{N}_{+}.$
Then for any $\overline{P}_1,\cdots,\overline{P}_{k}\in \mathscr{K}^{-}$
satisfying  $\mu(M(\overline{P}_{1},\cdots,\overline{P}_{k}))>0,$ we have
\begin{align}\label{1.75}
\deg_{H^{\sigma}}
\left(u-P_{\sigma}^{-1}(K|u|^{1-\tau}u),
\Sigma_{\tau}(\overline{P}_{1},\cdots,\overline{P}_{k}), 0\right)
=(-1)^{k+\sum_{j=1}^{k}i(\overline{P}_{j})},
\end{align}
where $\deg_{H^{\sigma}}$ denotes the Leray-Schauder degree in $H^{\sigma}(\mathbb{S}^3),$
and $i(\overline{P}_{j})$ is the Morse index of $K$ at $\overline{P}_{j}.$
\end{theorem}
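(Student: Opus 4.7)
My strategy is a Lyapunov--Schmidt finite-dimensional reduction combined with a careful Morse-theoretic degree computation on the reduced problem. The starting point is the decomposition $u = \sum_{i=1}^k \alpha_i \delta_{P_i,t_i}+v$ with $v\in E_{P,t}$, as provided by the unique splitting lemma preceding the theorem.

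\textbf{Step 1 (Auxiliary equation).} For each fixed $(\alpha,t,P)\in\Omega_{\varepsilon_0/2}$ with $A^{-1}\tau^{-1/2}<t_i<A\tau^{-1/2}$ and $|P_i-\overline{P}_i|<\tau^{1/2}|\log\tau|$, I would solve the projection of the equation $u-P_\sigma^{-1}(K|u|^{1-\tau}u)=0$ onto the infinite-dimensional complement $E_{P,t}$. Invertibility on $E_{P,t}$ of the linearized operator $v\mapsto v - P_\sigma^{-1}\!\bigl((2-\tau)K(\sum\alpha_i\delta_{P_i,t_i})^{1-\tau}v\bigr)$ is standard for isolated-simple bubbles that are far apart, because $\delta_{P,t}$ solves \eqref{1.51} and the kernel of $P_\sigma-2\delta_{P,t}$ is spanned exactly by $\partial_{t}\delta_{P,t}$ and $\partial_{P^{(\ell)}}\delta_{P,t}$, which have been quotiented out. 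The contraction mapping principle then yields a unique solution $v=v(\alpha,t,P)\in E_{P,t}$ of class $C^1$ in parameters with $\|v\|_\sigma=O(\tau|\log\tau|+t^{-2})=o(\tau^{1/2})$, using the matching sizes $t_i\sim\tau^{-1/2}$.

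\textbf{Step 2 (Reduced functional and its expansion).} I would then consider the reduced functional $\Psi_\tau(\alpha,t,P):=I_\tau\bigl(\sum_i\alpha_i\delta_{P_i,t_i}+v(\alpha,t,P)\bigr)$, whose critical points in $\Omega_{\varepsilon_0/2}$ correspond bijectively to the zeros of the map in $\Sigma_\tau(\overline{P}_1,\ldots,\overline{P}_k)$; by the excision/product property of the Leray--Schauder degree one has
\begin{equation*}
\deg_{H^\sigma}\bigl(u-P_\sigma^{-1}(K|u|^{1-\tau}u),\Sigma_\tau,0\bigr)
=\deg\bigl(\nabla\Psi_\tau,\Omega_{\varepsilon_0/2},0\bigr).
\end{equation*}
The key calculation is an asymptotic expansion, relying on the estimates in Section 2 together with the Green's function asymptotics of $P_\sigma$ on $\mathbb{S}^3$:
\begin{equation*}
\Psi_\tau(\alpha,t,P)=\sum_{i=1}^k\!\Bigl[c_0\alpha_i^2-\tfrac{c_1}{3}K(P_i)\alpha_i^3\Bigr]
+\tfrac{c_2\tau}{3}\sum_i\alpha_i^3 K(P_i)\log t_i
-c_3\!\sum_i\!\tfrac{\alpha_i^3\Delta_{g_0}K(P_i)}{t_i^2}
-c_4\!\sum_{i\ne j}\!\tfrac{\alpha_i\alpha_j G_{P_i}(P_j)}{t_it_j}+o(\tau),
\end{equation*}
for explicit positive constants $c_0,\ldots,c_4$, the crux being that the cross-interaction and the local self-interaction terms appear at exactly the same order $\tau$ precisely because $n-2\sigma=2$.

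\textbf{Step 3 (Critical point analysis).} Vanishing of $\partial_{\alpha_j}\Psi_\tau$ forces $\alpha_j=1/K(P_j)+o(1)$, giving the correct leading behaviour. Rescaling $s_j=\tau^{1/2}t_j$ and writing $\lambda_j=\alpha_j K(P_j)$, the equations $\partial_{t_j}\Psi_\tau=0$ become, to leading order, exactly the matrix eigenvalue system
\begin{equation*}
\sum_{\ell=1}^k M_{\ell j}(\overline P_1,\ldots,\overline P_k)\,\lambda_\ell
= \tfrac{c}{s_j^{\,2}}\lambda_j\cdot\tfrac{\log(1/\tau)}{|\log\tau|}+\cdots,
\end{equation*}
whose unique positive solution under the hypothesis $\mu(M)>0$ determines $(s_j)$ and $(\lambda_j)$ up to a positive factor fixed by the positivity of the Perron eigenvector. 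Vanishing of $\nabla_{P_j}\Psi_\tau$ then forces $P_j$ to lie near the nondegenerate critical point $\overline{P}_j$ of $K$ (this is where the Morse hypothesis on $K$ enters), giving a unique simple critical point of $\Psi_\tau$ inside $\Omega_{\varepsilon_0/2}$.

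\textbf{Step 4 (Degree count).} At the unique critical point the Hessian of $\Psi_\tau$ block-decouples (to leading order) into: (a) a $k\times k$ block in the $P_j$-directions whose signature is $\sum_{j=1}^k i(\overline{P}_j)$ by the Morse assumption on $K$; (b) a $k\times k$ block in the $\alpha$-directions which is negative definite (contributing $(-1)^k$); and (c) a $k\times k$ block in the $t$-directions whose signature is controlled by $M$, giving an additional $(-1)^k$ when $\mu(M)>0$ because the rescaled Hessian is conjugate to $M$ on the complement of its lowest eigenvector. Combining, the Brouwer degree of $\nabla\Psi_\tau$ at the critical point equals the product of signs $(-1)^{\sum_j i(\overline P_j)}\cdot(-1)^k\cdot(-1)^k\cdot(-1)^k=(-1)^{k+\sum_j i(\overline P_j)}$ (noting that the $3k$ trivial factors of $-1$ collapse modulo~2 to a single factor of $(-1)^k$).

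\textbf{Main obstacle.} The delicate point is not the reduction itself but the precise asymptotic expansion: one must extract the matrix $M$ exactly as in \eqref{M} from the interaction and concentration terms, and identify the positivity of $\mu(M)$ with nondegeneracy of the Hessian block in the $t$-directions and simultaneous solvability of the rescaled system for positive $\lambda_j$. The book-keeping of the constants $c_0,\ldots,c_4$ and the computation of the exact signature of the $(α,t)$-block — rather than just its Fredholm index — is where the argument requires the most care, and it is precisely where the assumption $\mu(M)>0$ is consumed to determine the overall sign.
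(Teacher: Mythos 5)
Your overall strategy (Lyapunov--Schmidt reduction to a finite-dimensional problem in $(\alpha,t,P)$, then a degree count on the reduced problem) is the same as the paper's, which implements it by splitting $I_\tau'(u)=\xi+\eta$ along $E_{P,t}\oplus\mathrm{span}\{\delta_{P_i,t_i},\partial_{t_i}\delta_{P_i,t_i},\partial_{P_i}\delta_{P_i,t_i}\}$ and deforming, through an explicit homotopy $X_\theta$ of maps of the form Id$+$compact, to a decoupled model map $X_0$. Your Steps 1--2 correspond to Propositions \ref{prop1} and \ref{lem5}--\ref{lem7}, and your expansion of the reduced functional has the right structure. However, Steps 3--4 --- precisely where the hypothesis $\mu(M)>0$ is consumed and where the sign in \eqref{1.75} is produced --- contain genuine errors.

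First, the critical-point equation in the concentration variables is not an eigenvalue problem and its solution is not ``determined up to a positive factor by the Perron eigenvector.'' After the substitution $s_j=t_j^{-1}$, the $t$-equations are the first-order conditions of the function $F(s)=-\sum_j\tfrac{4\tau}{K(\overline P_j)^2}\log s_j+\tfrac12\sum_i\bigl(M_{ii}s_i^2+\sum_jM_{ij}s_is_j\bigr)$; the hypothesis $\mu(M)>0$ (together with $M_{ii}>0$, i.e.\ $\overline P_i\in\mathscr K^-$) makes $F$ strictly convex and coercive on the open positive orthant, hence it has a \emph{unique} critical point, a nondegenerate minimum. The Perron--Frobenius argument you invoke is used in the paper only in the proof of Theorem \ref{thm1} to produce a nonnegative eigenvector of $M$; it plays no role here. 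Second, your sign bookkeeping for the Hessian blocks is not correct as stated: the $t$-block of the reduced Hessian is congruent to $\mathrm{Hess}\,F\succ0$ and is therefore positive definite, not ``conjugate to $M$ on the complement of its lowest eigenvector,'' and the $P$-block is $3k\times3k$, not $k\times k$, with the map there being $-\nabla_{g_0}K(P_i)$ at a nondegenerate critical point of index $i(\overline P_j)$. Multiplying four loosely justified factors of $(-1)^k$ and appealing to ``$3k$ trivial factors collapsing mod $2$'' does not constitute a derivation of $(-1)^{k+\sum_ji(\overline P_j)}$, even though the product you write down happens to match the statement. Finally, to compute the degree on $\Sigma_\tau$ at all you must rule out zeros on $\partial\Sigma_\tau$ and on $\Sigma_\tau\setminus\widehat\Sigma_\tau$ (in particular for $t_i$ near $A^{\pm1}\tau^{-1/2}$ and $|P_i-\overline P_i|$ near $\tau^{1/2}|\log\tau|$); the paper does this via the lower bound $\|\xi\|_\sigma\ge\tfrac{\delta_0}{2}\|v\|_\sigma-\|f_\tau\|$ and the nonvanishing of the leading terms in Propositions \ref{lem5}--\ref{lem7}, and your proposal omits this step entirely.
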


The following conclusion is needed for proving Theorem \ref{thm3}.
\begin{proposition}\label{prop1}
Under the assumptions of the Theorem \ref{thm3},
in addition that $\Sigma_{\tau}(\overline{P}_{1},\cdots,\overline{P}_{k})$ is as in \eqref{1.75}
and $E_{P,t}$ is as in \eqref{1.12} for the given $(\alpha, t,P).$
 Then there exists a unique minimizer $\overline{v}=\overline{v}_{\tau}(\alpha, t, P)
  \in E_{P,t}$ of  $I_{\tau}(\sum_{i=1}^{k} \alpha_{i} \delta_{P_{i}, t_{i}}+v)$
   with respect to $\{v \in E_{P,t}:\|v\|_{\sigma}<\nu_{0}\}.$ Furthermore,
   there exists a constant $C$ independent of $\tau$ such that
\begin{align*}
&\|\overline{v}\|_{\sigma}\leq
C\sum_{i=1}^{k}|\nabla K(P_i)|\tau^{1/2}+C\tau|\log \tau|\leq C\tau|\log \tau|.
\end{align*}
\end{proposition}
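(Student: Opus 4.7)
The plan is a standard finite-dimensional Lyapunov--Schmidt reduction. Writing $u_{\alpha,t,P}:=\sum_{i=1}^{k}\alpha_{i}\delta_{P_{i},t_{i}}$ and $J_{\tau}(v):=I_{\tau}(u_{\alpha,t,P}+v)$ for $v\in E_{P,t}$ with $\|v\|_{\sigma}<\nu_{0}$, I would expand
\begin{equation*}
J_{\tau}(v)=J_{\tau}(0)+\langle f_{\tau},v\rangle+\tfrac{1}{2}Q_{\tau}(v,v)+N_{\tau}(v),
\end{equation*}
where $f_{\tau}\in E_{P,t}$ is the orthogonal projection (with respect to $\langle\cdot,\cdot\rangle$) of $I_{\tau}'(u_{\alpha,t,P})$ onto $E_{P,t}$, the bilinear form $Q_{\tau}$ is induced by $I_{\tau}''(u_{\alpha,t,P})\big|_{E_{P,t}}$, and $N_{\tau}(v)=O(\|v\|_{\sigma}^{3})$ gathers the superquadratic remainder. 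Existence and uniqueness of $\overline v$ will follow from strict convexity of $J_{\tau}$ on a small ball, while the norm estimate will follow from bounding $\|f_{\tau}\|_{\sigma}$ and inverting $Q_{\tau}$.

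The hard part is establishing a uniform coercivity bound $Q_{\tau}(v,v)\geq c_{0}\|v\|_{\sigma}^{2}$ for all $v\in E_{P,t}$, with $c_{0}>0$ independent of $\tau$ and of $(\alpha,t,P)\in\Omega_{\varepsilon_{0}/2}$. For a single bubble $\delta_{P,t}$, the linearization of $P_{\sigma}u=u^{2}$ is known (by classical spectral analysis of the conformal model \eqref{1.51}) to have kernel spanned exactly by $\delta_{P,t}$, $\partial_{t}\delta_{P,t}$ and the derivatives $\partial_{P^{(\ell)}}\delta_{P,t}$, i.e.\ precisely the directions killed by the orthogonality conditions \eqref{1.12}, and to be strictly positive on the complement. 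For a $k$-bubble configuration with $t_{i}\sim\tau^{-1/2}$ and $|P_{i}-P_{j}|\geq\delta^{\ast}/2$, the mutual interactions $\langle\delta_{P_{i},t_{i}},\delta_{P_{j},t_{j}}\rangle$ and their derivatives are of order $t^{-(n-2\sigma)}=t^{-2}\sim\tau$, and the subcritical correction coming from replacing $u^{2}$ by $K|u|^{1-\tau}u$ introduces an additional $O(\tau|\log\tau|)$ perturbation of the quadratic form; both are small perturbations of the block-diagonal single-bubble operator, and the coercivity persists uniformly.

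Granted coercivity, I would bound $\|f_{\tau}\|_{\sigma}$ directly. For each $i$, Taylor-expand $K(x)=K(P_{i})+\nabla K(P_{i})\cdot(x-P_{i})+O(|x-P_{i}|^{2})$; the constant term, combined with the choice $\alpha_{i}\approx 1/K(P_{i})$ and the identity $P_{\sigma}\delta_{P_{i},t_{i}}=\delta_{P_{i},t_{i}}^{2}$, is absorbed up to subcritical and interaction errors. Tested against $v\in E_{P,t}$ (which satisfies \eqref{1.12}), the linear term contributes $O(|\nabla K(P_{i})|\,t_{i}^{-1})=O(|\nabla K(P_{i})|\tau^{1/2})$, the quadratic term contributes $O(t_{i}^{-2})=O(\tau)$, the interaction between distinct bubbles contributes $O(\tau)$, and the subcritical expansion $\delta^{2-\tau}=\delta^{2}(1-\tau\log\delta+O(\tau^{2}\log^{2}\delta))$ together with $\int\delta_{P_{i},t_{i}}^{2}\log\delta_{P_{i},t_{i}}=O(|\log\tau|)$ contributes $O(\tau|\log\tau|)$. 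Summing yields
\begin{equation*}
\|f_{\tau}\|_{\sigma}\leq C\sum_{i=1}^{k}|\nabla K(P_{i})|\tau^{1/2}+C\tau|\log\tau|.
\end{equation*}

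To finish, strict convexity of $J_{\tau}$ on $\{v\in E_{P,t}:\|v\|_{\sigma}<\nu_{0}\}$ (from $Q_{\tau}\geq c_{0}$ and the cubic nature of $N_{\tau}$) gives a unique minimizer $\overline v$, which is obtained by direct minimization or equivalently by the implicit function theorem applied to the Euler--Lagrange equation $f_{\tau}+Q_{\tau}\overline v+O(\|\overline v\|_{\sigma}^{2})=0$. Inverting $Q_{\tau}$ and bootstrapping gives $\|\overline v\|_{\sigma}\leq c_{0}^{-1}\|f_{\tau}\|_{\sigma}$, which is the first claimed bound. Since $\overline P_{i}$ is a nondegenerate critical point of the Morse function $K$ and $|P_{i}-\overline P_{i}|<\tau^{1/2}|\log\tau|$ in $\Sigma_{\tau}$, one has $|\nabla K(P_{i})|\leq C\tau^{1/2}|\log\tau|$, so $|\nabla K(P_{i})|\tau^{1/2}=O(\tau|\log\tau|)$, and the second inequality in the statement follows.
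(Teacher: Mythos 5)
Your proposal is correct and follows essentially the same route as the paper: expand $I_{\tau}$ around $\sum_i\alpha_i\delta_{P_i,t_i}$ into a linear part $f_{\tau}$, a quadratic form $Q_{\tau}$ and a superquadratic remainder, establish uniform coercivity of $Q_{\tau}$ on $E_{P,t}$ (the paper imports $Q_0(v,v)\geq\delta_0\|v\|_{\sigma}^2$ from \cite{Liu_cpaa} and perturbs to $Q_{\tau}$, exactly as you sketch), solve by the implicit function theorem, and bound $\|\overline v\|_{\sigma}\leq C\|\widetilde f_{\tau}\|_{\sigma}$ with the same $O(|\nabla K(P_i)|\tau^{1/2})+O(\tau)+O(\tau|\log\tau|)$ bookkeeping for $f_{\tau}$. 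One minor imprecision: the spectral fact you invoke is misstated --- $\delta_{P,t}$ is not in the kernel of the linearized operator but is the (unique) negative direction of $Q_0$, the kernel being spanned by $\partial_t\delta_{P,t}$ and $\partial_{P^{(\ell)}}\delta_{P,t}$; since the orthogonality conditions \eqref{1.12} remove all of these directions, your conclusion (coercivity on $E_{P,t}$) is nevertheless correct. Also note the remainder is $O(\|v\|_{\sigma}^{3-\tau})$ rather than $O(\|v\|_{\sigma}^{3})$, which changes nothing in the argument.
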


 \begin{proof}
 For $(\alpha, t, P, v) \in \Sigma_{\tau}(\overline{P}_{1},\cdots,\overline{P}_{k}),$
 which is  simply written as $\Sigma_{\tau},$ it follows from \eqref{1.12} that
 \begin{align}
 \begin{aligned}\label{1.26}
 &I_{\tau}\Big(\sum_{i=1}^{k}\alpha_{i}\delta_{P_i,t_i}+v\Big)\\
=&\frac{1}{2}\sum_{i=1}^{k}\alpha_{i}^2\int_{\mathbb{S}^3}\delta_{P_i,t_i}^{3}
+\frac{1}{2}\sum_{j\ne i}\alpha_{i}\alpha_{j}\int_{\mathbb{S}^3}\delta_{P_i,t_i}^{2}\delta_{P_j,t_j}
+\frac{1}{2}\int_{\mathbb{S}^3}(P_{\sigma} v) v
\\
&\quad
+\frac{1}{3-\tau}
\int_{\mathbb{S}^3}K\Big|\sum_{i=1}^{k}\alpha_i\delta_{P_i,t_i}+v\Big|^{3-\tau}.
\end{aligned}
 \end{align}
Using Lemma \ref{cplema1} and \eqref{p1p1}, we have,
\begin{align*}
&I_{\tau}\Big(\sum_{i=1}^{k}\alpha_{i}\delta_{P_i,t_i}+v\Big)\\
=&\frac{|\mathbb{S}^3|}{2}\sum_{i=1}^{k} \alpha_{i}^{2}
+\sum_{i<j} \alpha_{i} \alpha_{j}
\int_{\mathbb{S}^{3}}\delta_{P_{i}, t_{i}}^{2} \delta_{P_{j}, t_{j}}\\
\quad&-\frac{1}{3-\tau} \int_{\mathbb{S}^{3}} K\Big(\sum_{i=1}^{k} \alpha_{i} \delta_{P_{i}, t_{i}}\Big)^{3-\tau}
-\int_{\mathbb{S}^3}K\Big(\sum_{i=1}^{k}\alpha_i \delta_{P_i,t_i}\Big)^{2-\tau}v
\\
\quad&
+\frac{1}{2}\int_{\mathbb{S}^3}(P_{\sigma} v) v
-\frac{2-\tau}{2}\int_{\mathbb{S}^3}\Big( \sum_{i=1}^{k}\alpha_{i}\delta_{P_i,t_i}\Big)^{1-\tau}v^2
+V(\tau, \alpha, t, P, v),
\end{align*}
where $|V(\tau, \alpha, t, P, v)|
\leq C\|v\|_{\sigma}^{3-\tau}$ and $C$ depends only on $K, \nu_{0},$ and $A$.

For $\varphi, v \in E_{P,t}$, set
\begin{align}
f_{\tau}(v) &=-\int_{\mathbb{S}^{3}} K\big(\sum_{i=1}^{k} \alpha_{i} \delta_{P_{i}, t_{i}}\big)^{2-\tau} v,\label{1.24}\\
Q_{\tau}(v,\varphi)&=\frac{1}{2}\int_{\mathbb{S}^3}(P_{\sigma}v) \varphi
-\frac{2-\tau}{2}\int_{\mathbb{S}^3}K\big(\sum_{i=1}^{k}\alp_{i}\delta_{P_i,t_i}\big)^{1-\tau}\,
v\varphi,\label{1.94}\\
Q_{0}(v,\varphi)&=\frac{1}{2}\int_{\mathbb{S}^3}
(P_{\sigma} v)\varphi-\int_{\mathbb{S}^3}\big(\sum_{i=1}^{k}\alpha_{i}\delta_{P_i,t_i}\big) v\varphi.
\end{align}
A direct calculation gives, for all $\varphi \in E_{P,t}$,
\begin{align}\label{1.66}
I_{\tau}'\Big(\sum_{i=1}^{k}\alpha_{i}\delta_{P_{i},t_{i}}+v\Big)\varphi
=f_{\tau}(\varphi)+2Q_{\tau}(v,\varphi)+
\langle V_{v}(\tau,\alpha,t,P,v),\varphi\rangle,
\end{align}
where $V_{v}$ is some function satisfying
$\|V_{v}(\tau,\alpha,t,P,v)\|_{\sigma}\leq C\|v\|_{\sigma}^{2-\tau}.$

 Since $f_{\tau}$ is a continuous linear functional over $E_{P,t}$,
 there exists   a unique $\widetilde{f}_{\tau}\in E_{P,t}$ such that
 \begin{align}\label{1.47}
 f_{\tau}(\varphi)=\langle \widetilde{f}_{\tau}, \varphi\rangle,\quad\forall\,\varphi\in E_{P,t}.
\end{align}
It is proved in \cite{Liu_cpaa} that there exists a constant $\delta_{0}>0$ (independent of $\tau$)
such that
\begin{align*}
Q_{0}(v,v)\geq\delta_0\|v\|_{\sigma}^2, \quad \forall\, (\alpha,t,P,v)\in \Sigma_{\tau}.
\end{align*}
We choose $\varepsilon_{0}$ sufficiently small from the beginning. Using some elementary
estimates as in Appendix, we have, for $\tau>0$ small,
\begin{align}\label{1.68}
Q_{\tau}(v,v)\geq \frac{\delta_{0}}{2}\|v\|_{\sigma}^{2},\quad \forall \, (\alpha,t,P,v)\in \Sigma_{\tau}.
\end{align}
Thus,  there  exists a unique symmetric continuous and coercive operator $\widetilde{Q}_{\tau}$ from
$E_{P,t}$ onto itself such that, for any $\varphi\in E_{P,t},$
\begin{align}\label{1.67}
Q_{\tau}(v, \varphi)=\langle \widetilde{Q}_{\tau}v, \varphi\rangle.
\end{align}
Using these notations, \eqref{1.66}, \eqref{1.47}, and \eqref{1.67}, we have
\begin{align}\label{1.96}
I_{\tau}'\Big(\sum_{i=1}^{k}\alpha_{i}\delta_{P_{i},t_{i}}+v\Big)
=\widetilde{f}_{\tau}+2\widetilde{Q}_{\tau}v+V_{v}(\tau,\alpha,t,P,v).
\end{align}

There is an equivalence between the existence of minimizer $\overline{v}_{\tau}$ and
\begin{align}\label{1.46}
\widetilde{f}_{\tau}+2\widetilde{Q}_{\tau}v+V_{v}(\tau,\alpha,t,P,v)=0,\quad v\in E_{P,t}.
\end{align}
As in \cite{Liu_cpaa,rey}, by the implicit function theorem, there exists a unique $v_{\tau}\in E_{P,t}$ with
$\|v\|_{\sigma}<\nu_{0}$ satisfying  \eqref{1.46} and
\begin{align}\label{1.48}
\|\overline{v}\|_{\sigma}\leq C\|\widetilde{f}_{\tau}\|_{\sigma}.
\end{align}
Thus, we only need to estimate $\|\widetilde{f}_{\tau}\|_{\sigma}.$
From  Lemma \ref{lem2}, \eqref{1-tau},  and \eqref{p-p1}, we can obtian
\begin{align*}
f_{\tau}(v)=
&-\int_{\mathbb{S}^{3}} K\Big(\sum_{i=1}^{k} \alpha_{i}^{2-\tau} \delta_{P_{i}, t_{i}}^{2-\tau}\Big) v
+O\Big(\sum_{i \neq j} \int_{\mathbb{S}^{3}} \delta_{P_{i}, t_{i}}^{1-\tau} \delta_{P_{j}, t_{j}}|v|\Big) \notag\\
=&
-\int_{\mathbb{S}^{3}}(K-K(P_{i})) \sum_{i=1}^{k} \alpha_{i}^{2-\tau}
\delta_{P_{i}, t_{i}}^{2} v+O\Big(\sum_{i=1}^{k} \int_{\mathbb{S}^{3}}
|\delta_{P_{i}, t_{i}}^{2-\tau}-\delta_{P_{i}, t_{i}}^{2} | |v|\Big)\notag \\
&\quad+O\Big(
\sum_{i\ne j}\|\delta_{P_i,t_i}^{1-\tau}\delta_{P_j,t_j}\|_{L^{3/2}(\mathbb{S}^3)}\|v\|_{\sigma}
\Big) \notag\\
=& O\Big(\sum_{i=1}^{k}|\nabla_{g_0} K(P_{i})| \int_{\mathbb{S}^{3}}|P-P_{i}| \delta_{P_{i}, t_{i}}^{2}|v|\Big)
+O\Big(\sum_{i=1}^k\int_{\mathbb{S}^3}|P-P_{i}|^2\delta_{P_i,t_{i}}^2|v|\Big)\notag\\
&\quad+O(\tau|\log\tau|\|v\|_{\sigma}),
\end{align*}
where $|P-P_{i}|$ represents the distance between  two points $P$
 and $P_{i}$  after through a stereographic  projection.

Using \eqref{p-p1} again,
we have, for all $(\alpha,t,P,v)\in  \Sigma_{\tau}$,
\begin{align}\label{2.2}
|f_{\tau}(v)|\leq &C\Big\{
\tau^{1/2}
\sum_{i=1}^{k}|\nabla K(P_i)|+\tau+\tau|\log \tau|
\Big\}\|v\|_{\sigma}\notag\\
\leq &C\tau|\log \tau|\|v\|_{\sigma},
\end{align}
this,  combining \eqref{1.47} and \eqref{1.48}, we complete the proof.
\end{proof}

\begin{proposition}\label{lem5}
Under the assumptions of Theorem \ref{thm3},
in addition that $\Sigma_{\tau}(\overline{P}_{1},\cdots,\overline{P}_{k})$ is as in \eqref{1.75}.
Then for any $(\alpha, t,P, v)\in \Sigma_{\tau}(\overline{P}_{1},\cdots,\overline{P}_{k}),$  there exists
$V_{\alpha_i}(\tau,\alpha,t,P,v)$ such that
\begin{align*}
\frac{\partial}{\partial \alpha_{i}} I_{\tau}\Big(\sum_{j=1}^{k} \alpha_{j} \delta_{P_{j}, t_{j}}+v\Big)
=-|\mathbb{S}^3|\beta_{i}+V_{\alpha_i}(\tau,\alpha,t,P,v),
\end{align*}
where $\beta_{i}:=\alpha_{i}-1/K(P_{i}),$  $i=1,\cdots,k.$
Furthermore, let $\overline{v}$ be as in Proposition \ref{prop1}, then
\begin{align*}
\frac{\partial}{\partial \alpha_{i}} I_{\tau}\Big(\sum_{j=1}^{k} \alpha_{j} \delta_{P_{j}, t_{j}}+\overline{v}\Big)
=-|\mathbb{S}^3|\beta_i+O(|\beta|^2+\tau|\log \tau|).
\end{align*}
\end{proposition}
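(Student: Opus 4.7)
The plan is to differentiate $I_\tau$ in $\alpha_i$ at $u=\sum_{j=1}^k\alpha_j\delta_{P_j,t_j}+v$, identify the leading linear-in-$\beta_i$ term coming from the Euler--Lagrange structure, and then, after substituting $v=\overline v$, absorb every remaining piece into the stated error. The first assertion is just a definition of $V_{\alpha_i}$ once the leading $-|\mathbb{S}^3|\beta_i$ has been pulled out; the real content is in controlling $V_{\alpha_i}$ under $v=\overline v$.

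Concretely, I would use $P_\sigma\delta_{P_i,t_i}=\delta_{P_i,t_i}^2$ (from \eqref{1.51}) and self-adjointness of $P_\sigma$ to compute
\[
\frac{\partial I_\tau}{\partial\alpha_i}(u)=\sum_{j=1}^k\alpha_j\int_{\mathbb{S}^3}\delta_{P_i,t_i}^2\delta_{P_j,t_j}+\int_{\mathbb{S}^3}v\,\delta_{P_i,t_i}^2-\int_{\mathbb{S}^3}K|u|^{1-\tau}u\,\delta_{P_i,t_i}.
\]
The middle term vanishes because $v\in E_{P,t}$ forces $\int v\,\delta_{P_i,t_i}^2=\langle v,\delta_{P_i,t_i}\rangle=0$. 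The diagonal piece of the first sum is $\alpha_i|\mathbb{S}^3|$ by $\int\delta_{P_i,t_i}^3=|\mathbb{S}^3|$ (i.e.\ \eqref{p1p1}), and off-diagonal terms are $O(\tau|\log\tau|)$ by Lemma \ref{cplema1}. For the nonlinear integral I would write $u=\alpha_i\delta_{P_i,t_i}+w_i$ with $w_i:=\sum_{j\neq i}\alpha_j\delta_{P_j,t_j}+v$, Taylor-expand $|u|^{1-\tau}u$ around $\alpha_i\delta_{P_i,t_i}$, and apply \eqref{1-tau} and \eqref{p-p1} from the Appendix to conclude
\[
\int_{\mathbb{S}^3}K|u|^{1-\tau}u\,\delta_{P_i,t_i}=K(P_i)\alpha_i^{2-\tau}|\mathbb{S}^3|\bigl(1+O(\tau|\log\tau|)\bigr)+R(\tau,\alpha,t,P,v),
\]
where $R$ gathers the $K-K(P_i)$ Taylor remainder near $P_i$, the cross-bubble interactions, and the $v$-linear and $v$-quadratic contributions, each estimated exactly as in the derivation of \eqref{2.2}. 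Combining the two displays and using the algebraic identity $\alpha_i-K(P_i)\alpha_i^{2-\tau}=-\beta_i+O(\beta_i^2+\tau|\log\tau|)$ (which follows from $\alpha_i=1/K(P_i)+\beta_i$ and $\alpha_i^{2-\tau}=\alpha_i^2(1+O(\tau|\log\tau|))$) produces the principal term $-|\mathbb{S}^3|\beta_i$; everything else is defined to be $V_{\alpha_i}$, proving the first formula.

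For the second identity, I would substitute $v=\overline v$ and invoke the bound $\|\overline v\|_\sigma\le C\tau|\log\tau|$ from Proposition \ref{prop1}. The $v$-dependent pieces of $V_{\alpha_i}$, being linear or quadratic in $v$ paired against appropriate powers of bubbles, are controlled by $\|\overline v\|_\sigma+\|\overline v\|_\sigma^{2-\tau}=O(\tau|\log\tau|)$ via the same Hölder/Sobolev estimates used for \eqref{2.2}; the bubble interactions are $O(\tau|\log\tau|)$; and the algebraic leftover is $O(\beta_i^2)\subset O(|\beta|^2)$. The main obstacle is the careful bookkeeping in the expansion of the nonlinear term: four small scales ($\beta_i$, $\tau^{1/2}|P_i-\overline P_i|$, $\tau|\log\tau|$, and $\|v\|_\sigma$) must be balanced simultaneously, and one must ensure that no spurious term linear in $\beta_j$ for $j\neq i$ leaks into $V_{\alpha_i}$, which requires exploiting the strong concentration of $\delta_{P_i,t_i}^3$ near $P_i$ to localize the leading contribution and to prove the cross-bubble pieces contribute only higher-order errors.
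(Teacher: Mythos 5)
Your proposal is correct and follows essentially the same route as the paper: differentiate $I_\tau$ in $\alpha_i$, kill the $v$-term by the orthogonality $\langle v,\delta_{P_i,t_i}\rangle=0$, extract the leading term $\alpha_i|\mathbb{S}^3|-K(P_i)\alpha_i^{2-\tau}|\mathbb{S}^3|=-|\mathbb{S}^3|\beta_i+O(|\beta|^2+\tau)$, and absorb cross-bubble interactions, the $K-K(P_i)$ remainder, and the $v$-contributions into $V_{\alpha_i}$ using the Appendix estimates, then conclude via $\|\overline v\|_\sigma=O(\tau|\log\tau|)$. The only quibble is a citation slip: the off-diagonal terms $\alpha_j\int\delta_{P_j,t_j}^2\delta_{P_i,t_i}=O(\tau)$ come from \eqref{a.10} (equivalently \eqref{a.2}), not from Lemma \ref{cplema1}.
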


\begin{proof}

From \eqref{p1p1}, \eqref{a.10}, Lemma \ref{lem1}, Lemma \ref{lem2},
and \eqref{1-tau}, we have
\begin{align*}
&\frac{\partial}{\partial \alpha_{i}} I_{\tau}\Big(
\sum_{j=1}^{k} \alpha_{j} \delta_{P_{j}, t_{j}}+v\Big)
\nonumber \\
=&
\alpha_{i}\int_{\mathbb{S}^3}\delta_{P_i,t_i}^3
+\frac{1}{2}\sum_{j\ne i}\alpha_{j}\int_{\mathbb{S}^3}\delta_{P_j,t_j}^2\delta_{P_i,t_i}
\\
&\quad
-\int_{\mathbb{S}^{3}} K\Big|\sum_{j=1}^{k} \alpha_{j} \delta_{P_{j}, t_{j}}
+v\Big|^{1-\tau}\Big(\sum_{j=1}^{k} \alpha_{j} \delta_{P_{j}, t_{j}}+v\Big) \delta_{P_{i}, t_{i}}\nonumber\\
=
&|\mathbb{S}^3|\alpha_{i}
-\int_{\mathbb{S}^3}K\Big(\sum_{j=1}^{k}\alpha_{j}^{2-\tau}
\delta_{P_j,t_j}^{2-\tau}\Big)\delta_{P_i,t_i}
-(2-\tau)
\int_{\mathbb{S}^3}K\Big(\alpha_{i}^{1-\tau}\delta_{P_i,t_i}^{1-\tau}\Big)\delta_{P_i,t_i}v\\
&\quad
+O(\tau|\log\tau|)+O(\tau)+O(\|v\|_{\sigma}^{2-\tau}).
\end{align*}
By using \eqref{p-p1}, we obtain
\begin{align}\label{1.77}
\int_{\mathbb{S}^3}K\alpha_{i}^{2}\delta_{P_i,t_i}^{3-\tau}=&
\int_{\mathbb{S}^3}K(P_i)\alpha_{i}^{2}\delta_{P_i,t_i}^{3-\tau}
-\int_{\mathbb{S}^3}({K}(P)-K(P_i))\alpha_{i}^{2}\delta_{P_i,t_i}^{3-\tau}\notag\\
=&
\int_{\mathbb{S}^3}K(P_i)\alpha_{i}^{2}\delta_{P_i,t_i}^{3-\tau}
+O(\tau).
\end{align}
Similarly, by \eqref{1.12}, \eqref{a.9}, \eqref{stau}, and
 \eqref{p-p1}, we have
\begin{align}\label{1.78}
&\int_{\mathbb{S}^3}
K\alpha_{i}\delta_{P_i,t_i}^{2-\tau}v\notag\\
=&
\int_{\mathbb{S}^3}K(P_i) \alpha_{i}\delta_{P_i,t_i}^{2}v
+\int_{\mathbb{S}^3}(K(P)-K(P_i))\alpha_{i}\delta_{P_i,t_i}^{2}v
+O\left( \tau|\log \tau|\|v\|_{{\sigma}}\right)\notag\\
 =&O(\tau|\log\tau|)+O(\|v\|_{{\sigma}}^{2}).
\end{align}
 It follows from the fact $|\alpha_i^{2-\tau}-\alpha_{i}^{2}|=O(\tau),$ \eqref{1.77}, \eqref{1.78},
 \eqref{a.3}, and \eqref{a.7} that
\begin{align*}
&\frac{\partial}{\partial \alpha_{i}} I_{\tau}\Big(
\sum_{j=1}^{k} \alpha_{j} \delta_{P_{j}, t_{j}}+v\Big)\nonumber \\
=& |\mathbb{S}^3|\alpha_{i}
-\int_{\mathbb{S}^{3}} K\alpha_{i}^{2}\delta_{P_i,t_i}^{3-\tau}
-2\int_{\mathbb{S}^3}K\alpha_i\delta_{P_i,t_i}^{2-\tau}v+O(\tau|\log\tau|)+O(\|v\|_{\sigma}^{2-\tau})
\\
=&-\beta_{i}\int_{\mathbb{S}^3}\delta_{P_i,t_i}^3
+O(|\beta|^2)
+O(\tau|\log\tau|)+O(\|v\|_{\sigma}^{2-\tau})\\
=&-|\mathbb{S}^3|\beta_{i}
+O(|\beta|^2)
+O(\tau|\log\tau|)+O(\|v\|_{\sigma}^{2-\tau}).
\end{align*}
Hence
\begin{align}
\frac{\partial}{\partial \alpha_{i}} I_{\tau}\Big(\sum_{j=1}^{k} \alpha_{j} \delta_{P_{j}, t_{j}}+v\Big)
=-|\mathbb{S}^3|\beta_{i}+V_{\alpha_i}(\tau,\alpha,t,P,v),
\end{align}
where
$$V_{\alpha_i}(\tau,\alpha,t,P,v)=O(|\beta|^2)
+O(\tau|\log\tau|)
+O(\|v\|_{\sigma}^{2-\tau}).$$
Combining with Proposition \ref{prop1}, we get
\begin{align}\label{1.18}
\frac{\partial}{\partial \alpha_{i}} I_{\tau}\Big(\sum_{j=1}^{k} \alpha_{j} \delta_{P_{j}, t_{j}}+\overline{v}\Big)
=&-|\mathbb{S}^3|\beta_i+V_{\alpha_i}(\tau,\alpha,t,P,\overline{v})\notag\\
=&-|\mathbb{S}^3|\beta_i+O(|\beta|^2+\tau|\log \tau|).
\end{align}
Proposition \ref{lem5} follows from the above.
\end{proof}

\begin{proposition}\label{lem6}
Under the assumptions of  Theorem \ref{thm3},
in addition that $\Sigma_{\tau}(\overline{P}_{1},\cdots,\overline{P}_{k})$ is as in \eqref{1.75}.
Then for any $(\alpha, t,P, v)\in \Sigma_{\tau}(\overline{P}_{1},\cdots,\overline{P}_{k}),$  there exists
$V_{t_i}(\tau,\alpha, t,P,v)$ such that
\begin{align*}
&\frac{\partial}{\partial t_{i}}I_{\tau}\Big(\sum_{j=1}^{k} \alpha_{j}
\delta_{P_{j}, t_{j}}+v\Big)\notag\\
=&\frac{\Gamma_{3}}{K(P_i)^2}\frac{\tau}{t_{i}}
+\frac{\Gamma_{4}\Delta_{g_0}K(P_{i})}{K(P_i)^{3}}\frac{1}{t_i^3}
+\sum_{j\ne i}\frac{\Gamma_{5}G_{P_{i}}(P_{j})}{K(P_i)K(P_j)}\frac{1}{t_i^2t_j}
+V_{t_i}(\tau,\alpha, t,P,v),
\end{align*}
where
\begin{align*}
V_{t_i}(\tau,\alpha, t,P,v)=
O(|\beta|\tau^{3/2})
+
O(\tau \|v\|_{\sigma})+O(\tau^{1/2}\|v\|_{\sigma}^{2-\tau})
+O(\tau^{3/2}|\log \tau|),
\end{align*}
$\Gamma_{3},$ $\Gamma_{4},$ $\Gamma_{5}$ are positive constants,
 and $G_{P_{i}}(P_{j})$ is as in \eqref{gf}.
\end{proposition}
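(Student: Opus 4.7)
The plan is to differentiate $I_\tau$ in \eqref{fun} directly in $t_i$ at $u=\sum_{j=1}^k\alpha_j\delta_{P_j,t_j}+v$ and then isolate the three principal contributions. Writing out the derivative gives
\[
\frac{\partial}{\partial t_i} I_\tau(u)=\alpha_i\langle u,\partial_{t_i}\delta_{P_i,t_i}\rangle
-\alpha_i\int_{\mathbb{S}^3}K|u|^{1-\tau}u\,\partial_{t_i}\delta_{P_i,t_i}.
\]
Two facts simplify the quadratic piece at once: the identity $P_\sigma\delta_{P_j,t_j}=\delta_{P_j,t_j}^2$ from \eqref{1.51}, which turns $\langle\delta_{P_j,t_j},\partial_{t_i}\delta_{P_i,t_i}\rangle$ into $\int\delta_{P_j,t_j}^2\partial_{t_i}\delta_{P_i,t_i}$; and the orthogonality \eqref{1.12}, which makes the $v$-contribution vanish. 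Moreover, since $\int_{\mathbb{S}^3}\delta_{P_i,t_i}^3=|\mathbb{S}^3|$ is independent of $t_i$, the diagonal term $\alpha_i^2\int\delta_{P_i,t_i}^2\partial_{t_i}\delta_{P_i,t_i}$ is zero, so only the off-diagonal interactions $\alpha_i\alpha_j\int\delta_{P_j,t_j}^2\partial_{t_i}\delta_{P_i,t_i}$, $j\ne i$, remain from the quadratic part.

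The bulk of the work is to expand the nonlinear integral using (a) the Taylor expansion $z^{1-\tau}=z(1-\tau\ln z+O(\tau^2\ln^2 z))$ applied to $u>0$, (b) the Taylor expansion of $K$ centered at $P_i$, which yields $K(x)=K(P_i)+\tfrac12\nabla^2 K(P_i)(x-P_i,x-P_i)+O(|x-P_i|^3)$ after absorbing $|\nabla K(P_i)|\le C|P_i-\bar P_i|=O(\tau^{1/2}|\log\tau|)$ into the error, and (c) the concentration ansatz that near $P_i$ one has $u\simeq\alpha_i\delta_{P_i,t_i}$ while away from $P_i$ the bubble satisfies $\delta_{P_i,t_i}(x)=t_i^{-1}G_{P_i}(x)+O(t_i^{-3})$ in view of \eqref{delta} and \eqref{gf}. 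Combining (a) with the constant term in (b) and using $\int\delta^2\ln\delta\,\partial_t\delta=\tfrac13\partial_t\!\int\delta^3\ln\delta$ together with the scaling $\int\delta_{P,t}^3\ln\delta_{P,t}=c_0\ln t+O(1)$ (an appendix computation analogous to Lemmas \ref{lem1}--\ref{lem2}) yields the $\Gamma_3\tau/(K(P_i)^2 t_i)$ term. The Hessian piece in (b), after angular integration over a geodesic ball at $P_i$ and the rescaling $x=P_i+t_i^{-1}y$, contributes $\Gamma_4\Delta_{g_0}K(P_i)/(K(P_i)^3 t_i^3)$; here the symmetry $\int y_\ell y_m\to\tfrac13|y|^2\delta_{\ell m}$ converts the full Hessian into the Laplacian. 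Finally, substituting $\delta_{P_i,t_i}(P_j)\simeq t_i^{-1}G_{P_i}(P_j)$ in both the quadratic interaction $\alpha_i\alpha_j\int\delta_{P_j,t_j}^2\partial_{t_i}\delta_{P_i,t_i}$ and the cross terms of the nonlinear integral produces, via $\alpha_i\alpha_j\approx 1/(K(P_i)K(P_j))$, the interaction term $\Gamma_5 G_{P_i}(P_j)/(K(P_i)K(P_j) t_i^2 t_j)$ with the constant $\Gamma_5>0$ collecting the contributions.

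The remainders come in four flavors and are all handled in the same spirit as the proof of Proposition \ref{lem5}: the replacement $\alpha_i=1/K(P_i)+\beta_i$ contributes $O(|\beta|\tau^{3/2})$ from the self-interaction once paired with $\partial_{t_i}\delta\sim t_i^{-2}\sim\tau$; the $v$-linear remainder in the nonlinear expansion is estimated by Hölder's inequality plus Sobolev embedding against $\|v\|_\sigma$, giving $O(\tau\|v\|_\sigma)$ after pulling out the size $\tau$ of $\partial_{t_i}\delta_{P_i,t_i}$; the $v^2$-and-higher remainder yields $O(\tau^{1/2}\|v\|_\sigma^{2-\tau})$; and $|P_i-\bar P_i|^2+\tau^2|\log\tau|^2$ are collected into $O(\tau^{3/2}|\log\tau|)$ using the defining bounds of $\Sigma_\tau$ in \eqref{stau}. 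The main obstacle will be extracting the $\Gamma_3\tau/t_i$ contribution cleanly: one must justify the logarithmic expansion $\int\delta_{P,t}^3\ln\delta_{P,t}=c_0\ln t+O(1)$ uniformly for $(P,t)$ in the relevant range, and then verify that the $t_i$-derivative commutes with the $\tau$-expansion up to the stated error, a matter of splitting the sphere into a small ball around $P_i$ (where the Taylor expansion of $K$ governs) and its complement (where the Green's function asymptotics governs) and checking that the two pieces match and the leftover is of the claimed order.
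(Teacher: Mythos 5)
Your plan follows essentially the same route as the paper: expand $I_\tau$ at $u=\sum_j\alpha_j\delta_{P_j,t_j}+v$, kill the $v$-part of the quadratic term by the orthogonality \eqref{1.12}, observe that the diagonal term vanishes since $\int_{\mathbb{S}^3}\delta_{P_i,t_i}^3=|\mathbb{S}^3|$ is $t_i$-independent, Taylor-expand $K$ at $P_i$ to second order with $|\nabla K(P_i)|=O(\tau^{1/2}|\log\tau|)$ absorbed into the error, and read off the three leading terms from the bubble asymptotics. Your extraction of the $\Gamma_3\tau/t_i$ term via $z^{1-\tau}=z(1-\tau\ln z+\cdots)$ and $\int\delta^3\ln\delta=c_0\ln t+O(1)$ is an equivalent reformulation of the paper's use of $\int K\delta_{P_i,t_i}^{2-\tau}\partial_{t_i}\delta_{P_i,t_i}=\frac{1}{3-\tau}K(P_i)\partial_{t_i}\int\delta_{P_i,t_i}^{3-\tau}+\cdots$ together with the appendix formula \eqref{part3-tau}; both compute the same derivative of the subcritical self-energy.

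There is one concrete point where your error analysis, as stated, does not deliver the claimed bound. You propose to estimate the $v$-linear part of the nonlinear integral by ``H\"older plus Sobolev, after pulling out the size $\tau$ of $\partial_{t_i}\delta_{P_i,t_i}$.'' But $\partial_{t_i}\delta_{P_i,t_i}$ is not of size $\tau$: by \eqref{part11} its $H^\sigma$ norm is $\Gamma_2^{1/2}t_i^{-1}\sim\tau^{1/2}$, and a direct H\"older bound on $\int K\delta_{P_i,t_i}^{1-\tau}\partial_{t_i}\delta_{P_i,t_i}\,v$ only gives $O(\tau^{1/2}\|v\|_\sigma)$, which is not $O(\tau\|v\|_\sigma)$ and, since $\|v\|_\sigma$ is only bounded by $\nu_0$ on $\Sigma_\tau$, would swamp the leading terms of size $\tau^{3/2}$. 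The paper gets the extra factor $\tau^{1/2}$ from the exact cancellation $\int_{\mathbb{S}^3}\delta_{P_i,t_i}\,\partial_{t_i}\delta_{P_i,t_i}\,v=\frac12\langle v,\partial_{t_i}\delta_{P_i,t_i}\rangle=0$ (its \eqref{1.55}), so that only the differences $K-K(P_i)$ and $\delta_{P_i,t_i}^{1-\tau}-\delta_{P_i,t_i}$ survive, each contributing an additional smallness of order $\tau^{1/2}|\log\tau|$ or better. You already invoke this orthogonality for the quadratic piece; you must invoke it again here before applying H\"older, otherwise the $O(\tau\|v\|_\sigma)$ term in $V_{t_i}$ is not justified. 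With that correction the plan is sound.
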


\begin{proof}
Using \eqref{1.26},  Lemma \ref{lem1}, \eqref{part11}, H\"{o}lder inequality, and Sobolev embedding,
 we have,
\begin{align}\label{1.31}
&\frac{\partial}{\partial t_{i}}I_{\tau}\Big(\sum_{j=1}^{k}
 \alpha_{j} \delta_{P_{j}, t_{j}}+v\Big)\notag\\
=&\frac{1}{2}
\sum_{j\ne i}\alpha_{i}\alpha_{j}\frac{\partial }{\partial t_i}
\int_{\mathbb{S}^3}\delta_{P_j,t_j}\delta_{P_i, t_i}^2
-\int_{\mathbb{S}^3}K\Big( \sum_{j=1}^{k}\alpha_j\delta_{P_j,t_j}\Big)^{2-\tau}\alpha_{i}\frac{\partial\delta_{P_i, t_i}}{\partial t_{i}}\notag\\
&\quad
-(2-\tau)\int_{\mathbb{S}^3}K\Big(\sum_{j=1}^{k}\alpha_{j}\delta_{P_j,t_j}\Big)^{1-\tau}v
\alpha_{i}\frac{\partial \delta_{P_i,t_i}}{\partial t_i}
+O\Big(\|v\|_{\sigma}^{2-\tau}\Big\|\frac{\partial \delta_{P_i,t_i}}{\partial t_i}\Big\|_{\sigma}\Big
).
\end{align}
By \eqref{1.12},  we have
\begin{align}\label{1.55}
\int_{\mathbb{S}^3}
\delta_{P_i,t_i}
\frac{\partial \delta_{P_i,t_i}}{\partial t_i} v
=&\frac{1}{2}\int_{\mathbb{S}^3}v\frac{\partial}{\partial t_i}(P_{\sigma} \delta_{P_i,t_i})\notag\\
=&\frac{1}{2}\frac{\partial}{\partial t_i}\langle v,\delta_{P_i,t_i}\rangle\notag\\
=&\frac{1}{2}\Big\langle v, \frac{\partial \delta_{P_i,t_i}}{\partial t_i}\Big\rangle\notag\\
=&0.
\end{align}
It follows from \eqref{1.55}, \eqref{a.9}, \eqref{part11}, and \eqref{p-p1} that
\begin{align*}
&\Big|\int_{\mathbb{S}^3} K\delta_{P_i,t_i}^{1-\tau}
\frac{\partial \delta_{P_i,t_i}}{\partial t_i} v\Big|\\
=&
\Big| \int_{\mathbb{S}^3}(K-K(P_i))\delta_{P_i,t_i}
\frac{\partial \delta_{P_i,t_i}}{\partial t_i}v
+\int_{\mathbb{S}^3}K(\delta_{P_i,t_i}^{1-\tau}-\delta_{P_i,t_i})
\frac{\partial \delta_{P_i,t_i}}{\partial t_i} v
 \Big|\\
 \leq&
C \tau^{1/2}|\log \tau|
 \int_{\mathbb{S}^3}|P-P_i|\delta_{P_i,t_i}
 \frac{\partial \delta_{P_i,t_i}}{\partial t_i}
 v
 +O\Big(\|\delta_{P_i,t_i}^{1-\tau}-\delta_{P_i,t_i}\|_{L^{3}(\mathbb{S}^3)}
 \Big\|\frac{\partial \delta_{P_i,t_i}}{\partial t_i}\Big\|_{\sigma}
 \|v\|_{\sigma}\Big)\\
 \leq&
 (\tau^{1/2}|\log \tau|)
 O\Big(\big\||P-P_i|\delta_{P_i,t_i}\big\|_{L^{3}(\mathbb{S}^3)}
 \Big\|\frac{\partial \delta_{P_i,t_i}}{\partial t_i}\Big\|_{\sigma}
 \|v\|_{\sigma}\Big)+O\big(\tau^{3/2}
 \|v\|_{\sigma}\big)\\
 \leq& C\tau\|v\|_{\sigma},
\end{align*}
this, and \eqref{a.11} yields
\begin{align}
&\Big|
\int_{\mathbb{S}^3}K\Big(\sum_{j=1}^{k}\delta_{P_j,t_j}\Big)^{1-\tau}
\frac{\partial \delta_{P_i,t_i}}{\partial t_i}v
\Big|\notag\\
\leq &
\Big|
\int_{\mathbb{S}^3}K\alpha_{i}^{1-\tau}
\delta_{P_i,t_i}^{1-\tau}\frac{\partial \delta_{P_i,t_i}}{\partial t_i}v
\Big|
+C\sum_{j\ne i} \int_{\mathbb{S}^3}\delta_{P_j,t_j}^{1-\tau}
\Big|\frac{\partial \delta_{P_i,t_i}}{\partial t_i}\Big||v|\notag\\
\leq& C\tau\|v\|_{\sigma}+
O\Big( \sum_{j\ne i}\Big\|\delta_{P_j,t_j}^{1-\tau}
\frac{\partial \delta_{P_i,t_i}}{\partial t_i}\Big\|_{L^{3/2}(\mathbb{S}^3)}\|v\|_{\sigma}
\Big)
\notag\\
\leq& C\tau\|v\|_{\sigma}.\label{1.27}
\end{align}
Using \eqref{1.27} and Lemma \ref{lem2}, we obtain
\begin{align*}
&\frac{\partial}{\partial t_{i}}I_{\tau}
\Big(\sum_{j=1}^{k} \alpha_{j} \delta_{P_{j}, t_{j}}+v\Big)\\
=&
\frac{1}{2}
\sum_{j\ne i}\alpha_{i}\alpha_{j}\frac{\partial}{\partial t_i}\int_{\mathbb{S}^3}\delta_{P_j,t_j}\delta_{P_i, t_i}^2
-\int_{\mathbb{S}^3}K\Big( \sum_{j=1}^{k}\alpha_j\delta_{P_j,t_j}\Big)^{2-\tau}
\alpha_{i}\frac{\partial\delta_{P_i, t_i}}{\partial t_{i}}\\
&\quad +O(\tau\|v\|_{\sigma})
+O(\tau^{1/2}\|v\|_{\sigma}^{2-\tau})\\
=&
\frac{1}{2}
\sum_{j\ne i}\alpha_{i}\alpha_{j}\frac{\partial}{\partial t_i}
\int_{\mathbb{S}^3}\delta_{P_j,t_j}\delta_{P_i,t_i}^2
-\int_{\mathbb{S}^3}K\alpha_{i}^{3}\delta_{P_i,t_i}^{2-\tau}
\frac{\partial \delta_{P_i,t_i}}{\partial t_i}
 \\
&\quad
-\sum_{j\ne i}\int_{\mathbb{S}^3}
\alpha_{i}K \alpha_{j}^{2}\delta_{P_j,t_j}^{2-\tau}
\frac{\partial \delta_{P_i,t_i}}{\partial t_i}\\
&\quad
+O(\tau \|v\|_{\sigma})+O(\tau^{1/2}\|v\|_{\sigma}^{2-\tau})
+O(\tau^{3/2}|\log \tau|).
\end{align*}
We have used the following facts. By \eqref{1-tau}, \eqref{a.11}, \eqref{part11},
and  \eqref{a.6}, we have
\begin{align*}
&\sum_{j\ne \ell}\int_{\mathbb{S}^3} \delta_{P_j,t_j}^{1-\tau}\delta_{P_{\ell}, t_{\ell}}
\frac{\partial \delta_{P_i, t_i}}{\partial t_i}\\
=&
\int_{\mathbb{S}^3}\Big( \sum_{j=i,\ell\ne i}
\delta_{P_i,t_i}^{1-\tau}\delta_{P_{\ell}, t_{\ell}}\frac{\partial \delta_{P_i, t_i}}{\partial t_i}
+\sum_{j\ne i,\ell=i}\delta_{P_j,t_j}^{1-\tau}\delta_{P_i,t_i}\frac{\partial \delta_{P_i, t_i}}{\partial t_i}
+\sum_{j\ne\ell\ne i} \delta_{P_j,t_j}^{1-\tau}\delta_{P_{\ell}, t_{\ell}}
\frac{\partial \delta_{P_i, t_i}}{\partial t_i} \Big)\\
=&
O\Big(\sum_{\ell\ne i}\frac{\partial }{\partial t_i}\int_{\mathbb{S}^3}\delta_{P_{\ell},t_{\ell}}
\delta_{P_i,t_i}^{2-\tau}\Big)
+O\Big(\sum_{j\ne i}\Big\| \delta_{P_j,t_j}^{1-\tau}
\frac{\partial \delta_{P_i,t_i}}{\partial t_i}\Big\|_{L^{3/2}(\mathbb{S}^3)}\Big)\\
\quad&+
O\Big(\sum_{j\ne \ell\ne i}\left\|\delta_{P_j,t_j}^{1-\tau}\delta_{P_{\ell},t_{\ell}}\right\|_{L^{3/2}(\mathbb{S}^3)}
\Big\|\frac{\partial \delta_{P_i,t_i}}{\partial t_i}\Big\|_{\sigma}\Big)\\
=&O(\tau^{3/2}|\log \tau|).
\end{align*}
Using \eqref{partp12} and  \eqref{partk-kpj}, we have
\begin{align}\label{cald}
&\int_{\mathbb{S}^3}K\delta_{P_{j}, t_{j}}^{2-\tau}\frac{\partial \delta_{P_i, t_i}}{\partial t_i}\notag\\
=&
\frac{\partial}{\partial t_i} \int_{\mathbb{S}^3}K \delta_{P_j, t_j}^{2-\tau} \delta_{P_i, t_i}\notag\\
=&
K(P_j)\frac{\partial}{\partial t_i}\int_{\mathbb{S}^3} \delta_{P_j, t_j}^{2} \delta_{P_i, t_i}
+O(\tau^{5/2} |\log \tau|)
+O\Big( \frac{\partial}{\partial t_i}\int_{\mathbb{S}^3}(K(P)-K(P_j))\delta_{P_j, t_j}^{2-\tau} \delta_{P_i, t_i}  \Big)\notag\\
=&
K(P_j)\frac{\partial}{\partial t_i}\int_{\mathbb{S}^3}
\delta_{P_j, t_j}^{2} \delta_{P_i, t_i}+
O(\tau^{5/2} |\log \tau|)
+O(\tau^2).
\end{align}
By \eqref{a.8}, we have
\begin{align}
&-\int_{\mathbb{S}^3}K \alpha_{i}^{3}\delta_{P_{i}, t_i}^{2-\tau}\frac{\partial \delta_{P_i, t_i}}{\partial t_i}
\notag\\
=&
-\frac{1}{3-\tau}\alpha_{i}^3K(P_i)\frac{\partial }{\partial t_i}\int_{\mathbb{S}^3}\delta_{P_i,t_i}^{3-\tau}\notag\\
&\quad
-\frac{2}{3(3-\tau)}\Delta_{g_0} {K}(P_i)\frac{\partial }{\partial t_i}
\int_{\mathbb{S}^3}
|P-P_i|^2\alpha_{i}^3
\delta_{P_i,t_i}^{3-\tau}+O(\tau^2).\label{1.25}
\end{align}

Let
\begin{align}\label{calE}
\mathscr{E}=O(\tau \|v\|_{\sigma})+O(\tau^{1/2}\|v\|_{\sigma}^{2-\tau})
+O(\tau^{3/2}|\log \tau|),
\end{align}
then, by \eqref{part3-tau}, \eqref{a.5}, and \eqref{1.25}, we have
\begin{align*}
&\frac{\partial}{\partial t_{i}}I_{\tau}\Big(\sum_{j=1}^{k} \alpha_{j} \delta_{P_{j}, t_{j}}+v\Big)\\
=&\frac{1}{2}
\sum_{j\ne i}\alpha_{i}\alpha_{j}\frac{\partial}{\partial t_i}\int_{\mathbb{S}^3}\delta_{P_j,t_j}\delta_{P_i, t_i}^2
-\int_{\mathbb{S}^3}K \alpha_{i}^{3}\delta_{P_{i}, t_i}^{2-\tau}\frac{\partial \delta_{P_i, t_i}}{\partial t_i}\\
\quad&
-\alpha_i\sum_{j\ne i}K(P_j)\alpha_{j}^{2}
\frac{\partial}{\partial t_i}\int_{\mathbb{S}^3} \delta_{P_{j}, t_j}^{2} \delta_{P_i, t_i}
+\mathscr{E}\\
=&
\sum_{j\neq i}\Big\{\frac{1}{2}\alpha_{i}\alpha_{j}-\alpha_i\alpha_j^2K(P_{j})\Big\}
\frac{\partial}{\partial t_i}\int_{\mathbb{S}^3}\delta_{P_j,t_j}\delta_{P_i, t_i}^2\\
\quad&
-\frac{1}{3-\tau}\alpha_i^3K(P_i)\frac{\partial}{\partial t_i}\int_{\mathbb{S}^3}
\delta_{P_i, t_i}^{3-\tau}\\
\quad&
-\frac{2}{3(3-\tau)}\alpha_i^{3}\Delta_{g_0}K(P_i)
\frac{\partial}{\partial t_i}\int_{\mathbb{S}^3}|P-P_i|^2\delta_{P_i,t_i}^{3-\tau}
+O(\tau^{5/2})
+\mathscr{E}\\
=&
-\frac{1}{2}\sum_{j\ne i}\frac{1}{K(P_{i})K(P_{j})}\frac{\partial}{\partial t_i}\int_{\mathbb{S}^3}\delta_{P_j,t_j}\delta_{P_i, t_i}^2
-\frac{1}{3}\frac{1}{K(P_i)^2}\frac{\partial}{\partial t_i}
 \int_{\mathbb{S}^3}\delta_{P_i,t_i}^{3-\tau}\\
\quad&
-\frac{2}{9}\frac{\Delta_{g_0}K(P_{i})}{K(P_i)^3}
\frac{\partial}{\partial t_i}\int_{\mathbb{S}^3}|P-P_i|^2\delta_{P_i,t_i}^{3-\tau}
+\mathscr{E}+O(|\beta| \tau^{3/2}).
\end{align*}

It follows from \eqref{a.1}, \eqref{part3-tau}, and \eqref{a.5} that
\begin{align}\label{1.32}
&\frac{\partial}{\partial t_{i}}I_{\tau}\Big(\sum_{j=1}^{k} \alpha_{j}
\delta_{P_{j}, t_{j}}+v\Big)\notag\\
=&\frac{\Gamma_{3}}{K(P_i)^2}\frac{\tau}{t_{i}}
+\frac{\Gamma_{4}\Delta_{g_0}K(P_{i})}{K(P_i)^{3}}\frac{1}{t_i^3}
\notag\\
\quad&
+\sum_{j\ne i}\frac{\Gamma_{5}G_{P_{i}}(P_{j})}{K(P_i)K(P_j)}\frac{1}{t_i^2t_j}
+V_{t_i}(\tau,\alpha, t,P,v),
\end{align}
where
\begin{align*}
&V_{t_i}(\tau,\alpha, t,P,v)=
O(|\beta|\tau^{3/2})
+
O(\tau \|v\|_{\sigma})+O(\tau^{1/2}\|v\|_{\sigma}^{2-\tau})
+O(\tau^{3/2}|\log \tau|),\notag\\
&\Gamma_{3}=\frac{4}{3}\pi|\mathbb{S}^2|,\quad
\Gamma_{4}=\frac{2}{3}\pi |\mathbb{S}^2|,\quad
\Gamma_{5}=2\pi |\mathbb{S}^2|.
\end{align*}
Proposition \ref{lem6} follows from the above.
\end{proof}

\begin{proposition}\label{lem7}
Under the assumptions of Theorem \ref{thm3},
in addition that $\Sigma_{\tau}(\overline{P}_{1},\cdots,\overline{P}_{k})$ is as in \eqref{1.75}.
Then for any $(\alpha, t,P, v)\in \Sigma_{\tau}(\overline{P}_{1},\cdots,\overline{P}_{k}),$  there exists
 a constant $\nu_{1}>0$ independent of $\tau$ and a vector $V_{P_i}(\tau,\alpha,t,P,v),$ such that
\begin{align*}
&\frac{\partial}{\partial P_{i}}
I_{\tau}\Big(\sum_{j=1}^{k} \alpha_{j} \delta_{P_{j}, t_{j}}+v\Big)
=-\Gamma_6\nabla_{g_0}{K}(P_i)
+V_{P_i}(\tau,\alpha,t,P,v),
\end{align*}
where $\Gamma_{6}\geq\nu_{1}>0$ is a constant,
and
\begin{align*}
V_{P_i}(\tau,\alpha,t,P,v)=O(\tau^{1/2})
+O( \|v\|_{\sigma})
+O( \tau^{-1/2}\|v\|_{\sigma}^{2-\tau}).
\end{align*}
\end{proposition}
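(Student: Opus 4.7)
The plan is to differentiate $I_\tau(\sum_j \alpha_j \delta_{P_j,t_j}+v)$ with respect to the coordinate $P_i^{(\ell)}$ and mimic the bookkeeping already used in Propositions \ref{lem5} and \ref{lem6}. Writing $u=\sum_j \alpha_j \delta_{P_j,t_j}+v$ and noting that $\partial_{P_i} v = 0$ at fixed $v\in E_{P,t}$, we have
\begin{align*}
\frac{\partial I_\tau}{\partial P_i^{(\ell)}}
=\alpha_i\Big\langle u,\frac{\partial \delta_{P_i,t_i}}{\partial P_i^{(\ell)}}\Big\rangle
-\alpha_i\int_{\mathbb{S}^3} K|u|^{1-\tau}u\,\frac{\partial \delta_{P_i,t_i}}{\partial P_i^{(\ell)}}.
\end{align*}
The orthogonality \eqref{1.12} kills the $v$-piece of the first inner product, so it reduces to $\alpha_i\sum_j \alpha_j\int \delta_{P_j,t_j}^2 \partial_{P_i^{(\ell)}}\delta_{P_i,t_i}$. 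For $j=i$ this is $\tfrac{\alpha_i^2}{3}\partial_{P_i^{(\ell)}}\int\delta_{P_i,t_i}^3 = 0$ by conformal invariance, and for $j\neq i$ the appendix interaction estimates (as used in \eqref{cald} and around \eqref{partp12}) give a bound of order $t_i^{-1}t_j^{-1}=O(\tau)$.

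The bulk of the work is on the second term. I expand $|u|^{1-\tau}u$ and split into: (a) the diagonal term $\alpha_i^{3-\tau}\int K\,\delta_{P_i,t_i}^{2-\tau}\partial_{P_i^{(\ell)}}\delta_{P_i,t_i}$; (b) cross terms containing $\delta_{P_j,t_j}$ for $j\neq i$, estimated by the same $L^{3/2}$--Sobolev pairing and $\|\partial_{P_i}\delta_{P_i,t_i}\|_\sigma$ bound that controlled \eqref{1.27}, giving contributions $O(\tau^{1/2})$; and (c) $v$-terms, which combined with $\|\partial_{P_i}\delta_{P_i,t_i}\|_\sigma = O(t_i)=O(\tau^{-1/2})$ (this is the source of the $\tau^{-1/2}$ factor in the error claim) produce the $O(\|v\|_\sigma)$ and $O(\tau^{-1/2}\|v\|_\sigma^{2-\tau})$ terms exactly as advertised; the linear $v$-term is further controlled using orthogonality to $\partial_{P_i}\delta_{P_i,t_i}$ together with Taylor expansion of $K$ around $P_i$, as in \eqref{1.55}--\eqref{1.27}.

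For the diagonal term in (a) I rewrite it as
\begin{align*}
\frac{\alpha_i^{3-\tau}}{3-\tau}\,\frac{\partial}{\partial P_i^{(\ell)}}\int_{\mathbb{S}^3} K\,\delta_{P_i,t_i}^{3-\tau}
\end{align*}
and work in stereographic coordinates centered at $P_i$, where $\delta_{P_i,t_i}$ depends on $y-P_i$; integration by parts converts $\partial_{P_i^{(\ell)}}$ into $-\partial_{y^{(\ell)}}$ acting on $\delta^{3-\tau}$, and Taylor expansion of $\partial_{y^{(\ell)}}K$ at $P_i$ together with the radial symmetry of the leading bubble give
\begin{align*}
\frac{\partial}{\partial P_i^{(\ell)}}\int K\delta_{P_i,t_i}^{3-\tau}
=\partial_{\ell}K(P_i)\int \delta_{P_i,t_i}^{3-\tau}+O(t_i^{-1})=|\mathbb{S}^3|\,\partial_\ell K(P_i)+O(\tau^{1/2}|\log\tau|),
\end{align*}
the latter estimate using Lemma \ref{lem2} and the moment bounds of the appendix. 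Since $\alpha_i=1/K(P_i)+O(\tau|\log\tau|)$, the leading contribution is $\tfrac{|\mathbb{S}^3|}{3K(P_i)^3}\nabla_{g_0} K(P_i)$, giving $\Gamma_6=\tfrac{|\mathbb{S}^3|}{3K(P_i)^3}$, which is uniformly bounded below by some $\nu_1>0$ because $P_i$ stays in a fixed neighborhood of $\bar P_i$ where $K$ is positive and continuous.

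The main obstacle is the careful tracking of the $v$-dependent remainders: the inverse factor $\tau^{-1/2}$ in $O(\tau^{-1/2}\|v\|_\sigma^{2-\tau})$ is forced by $\|\partial_{P_i}\delta_{P_i,t_i}\|_\sigma\asymp t_i\asymp\tau^{-1/2}$, so one must verify that the only place this size enters is in the genuinely quadratic (in $v$) terms, while the linear-in-$v$ contribution is reduced to the better $O(\|v\|_\sigma)$ bound by exploiting the orthogonality in $E_{P,t}$ and a Taylor expansion $K(P)=K(P_i)+O(|P-P_i|)$ exactly as in the proof of Proposition \ref{lem6}. Once these two remainders are separated and the sphere-versus-Euclidean corrections from the stereographic chart are shown to contribute only to $O(\tau^{1/2})$, collecting all terms yields the claimed formula for $\partial_{P_i}I_\tau$.
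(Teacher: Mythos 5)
Your proposal is correct and follows essentially the same route as the paper: the same splitting of $\partial_{P_i}I_\tau$ into the interaction, diagonal, linear-in-$v$ and higher-order-in-$v$ pieces, the same use of the orthogonality \eqref{1.12} plus the Taylor expansion $K(P)=K(P_i)+O(|P-P_i|)$ to reduce the linear $v$-term to $O(\|v\|_\sigma)$, and the same extraction of $\Gamma_6\nabla_{g_0}K(P_i)$ from the diagonal term by Taylor expanding $K$ at $P_i$ in stereographic coordinates and using the radial symmetry of the bubble. The only (harmless) variation is that you integrate by parts to put the derivative on $K$ and invoke $\int\delta_{P_i,t_i}^{3-\tau}\to|\mathbb{S}^3|$, whereas the paper keeps the derivative on the bubble and evaluates the first moment $\int t_i y(\widetilde K(y)-\widetilde K(0))\omega_{0,t_i}^{4-\tau}$ directly; both yield a positive $\Gamma_6$ bounded below uniformly in $\tau$.
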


\begin{proof}

Using Lemma \ref{lem1}, we have
\begin{align}\label{1.33}
&\frac{\partial}{\partial P_{i}}
I_{\tau}\Big(\sum_{j=1}^{k} \alpha_{j} \delta_{P_{j}, t_{j}}+v\Big)\notag\\
=&
\frac{1}{2}\sum_{j\ne i}
\alp_i\alp_j\frac{\partial}{\partial P_i}\int_{\mathbb{S}^3}
\alp_{j}
\delta_{P_{j},t_{j}}^{2}\delta_{P_i,t_i}
\notag\\
\quad&
-\int_{\mathbb{S}^3}K\Big|\sum_{j=1}^{k}\alp_j\delta_{P_j,t_j}+v\Big|^{1-\tau}\Big( \sum_{j=1}^{k}\alp_j\delta_{P_j,t_j}+v\Big)
\alp_i\frac{\partial\delta_{P_i, t_i}}{\partial P_{i}}\notag\\
=&\frac{1}{2}\sum_{j\ne i}
\alp_i\alp_j\frac{\partial}{\partial P_i}\int_{\mathbb{S}^3}
\alp_{j}
\delta_{P_{j},t_{j}}^{2}\delta_{P_i,t_i}\notag\\
&\quad-\int_{\mathbb{S}^3}K\Big(\sum_{j=1}^{k}\alp_{j}\delta_{P_j,t_j}\Big)^{2-\tau}
\alp_{i}\frac{\partial \delta_{P_i,t_i}}{\partial P_i}\notag\\
&\quad
-(2-\tau)\int_{\mathbb{S}^3}
K\Big(\sum_{j=1}^{k}\alpha_{j}\delta_{P_{j},t_{j}}\Big)^{1-\tau}v
\alpha_{i}\frac{\partial \delta_{P_i,t_i}}{\partial P_i}\notag\\
&\quad
+O\Big( \|v\|_{\sigma}^{2-\tau}
\Big\|\frac{\partial \delta_{P_i,t_i}}{\partial P_i}\Big\|_{\sigma}\Big).
\end{align}

By \eqref{1.12}, we have
\begin{align}\label{1.28}
\int_{\mathbb{S}^3}\delta_{P_i,t_i}\frac{\partial \delta_{P_i,t_i}}{\partial P_i} v
=\frac{1}{2}\frac{\partial }{\partial P_i}
\int_{\mathbb{S}^3}\delta_{P_i,t_i}^2 v=
\frac{1}{2}\big\langle \frac{\partial \delta_{P_i,t_i}}{\partial P_i}, v \big\rangle =0.
\end{align}

It follows  from \eqref{a.12}, \eqref{a.13}, \eqref{1.28}, and \eqref{a.9} that
\begin{align}
&\int_{\mathbb{S}^3}
K\Big(\sum_{j=1}^{k}\alpha_{j}\delta_{P_{j},t_{j}}\Big)^{1-\tau}v
\alpha_{i}\frac{\partial \delta_{P_i,t_i}}{\partial P_i}
\notag\\
=&\int_{\mathbb{S}^3}
K(\alpha_{i}^{1-\tau}\delta_{P_i,t_i}^{1-\tau})\alpha_{i}
\frac{\partial \delta_{P_i,t_i}}{\partial P_{i}}v
+O\Big(\sum_{j\ne i}\int_{\mathbb{S}^3}
\delta_{P_j,t_j}^{1-\tau}
\Big|\frac{\partial \delta_{P_i,t_i}}{\partial P_i}\Big|
|v|
\Big)
\notag\\
=&K(P_i)\alpha_{i}^{2-\tau}\int_{\mathbb{S}^3}
\delta_{P_i,t_i}^{1-\tau}
\frac{\partial \delta_{P_i,t_i}}{\partial P_{i}}v
+O(\|v\|_{\sigma})\notag\\
=&O(\|v\|_{\sigma}).\label{1.29}
\end{align}

Then  Lemma \ref{lem2}, \eqref{1.29}, \eqref{p1p1}, and \eqref{a.7}
 yields
\begin{align*}
&\frac{\partial}{\partial P_{i}}
I_{\tau}\Big(\sum_{j=1}^{k} \alpha_{j} \delta_{P_{j}, t_{j}}+v\Big)\\
=&\frac{1}{2}\sum_{j\ne i}
\alp_i\alp_j\frac{\partial}{\partial P_i}\int_{\mathbb{S}^3}
\delta_{P_{j},t_{j}}^{2}\delta_{P_i,t_i}\\
&\quad
-\alp_i\int_{\mathbb{S}^3}K(\alp_i\delta_{P_i,t_i})^{2-\tau}
\frac{\partial \delta_{P_i,t_i}}{\partial P_i}
-\sum_{j\neq i}\alp_i\int_{\mathbb{S}^3}
K(\alp_j\delta_{P_j,t_j})^{2-\tau}
\frac{\partial \delta_{P_i,t_i}}{\partial P_i}
\\
&\quad
+O( \|v\|_{\sigma})
+O\left(\tau^{-1/2}\|v\|_{\sigma}^{2-\tau}
\right)\\
=&
O\Big( \sum_{j\neq i}\frac{\partial}{\partial P_i}
\int_{\mathbb{S}^3}\delta_{P_j,t_j}^2\delta_{P_i,t_i} \Big)\\
&\quad
-\alp_i^3\int_{\mathbb{S}^3}K\delta_{P_i,t_i}^{2-\tau}
\frac{\partial \delta_{P_i,t_i}}{\partial P_i}
+O\Big(\tau\Big|\int_{\mathbb{S}^3}\delta_{P_i,t_i}^{2-\tau}
\frac{\partial \delta_{P_i,t_i}}{\partial P_i}\Big|\Big)\\
&\quad+O\Big(\sum_{j\neq i}\Big|\int_{\mathbb{S}^3}\delta_{P_{j},t_j}^{2-\tau}
\frac{\partial \delta_{P_i,t_i}}{\partial P_{i}}\Big| \Big)
+O( \|v\|_{\sigma})
+O(\tau^{-1/2}\|v\|_{\sigma}^{2-\tau}
)\\
=&
-\alp_i^3\int_{\mathbb{S}^3}K\delta_{P_i,t_i}^{2-\tau}\frac{\partial \delta_{P_i,t_i}}{\partial P_i}
+O(\tau^{1/2})
+O( \|v\|_{\sigma})
+O(\tau^{-1/2}\|v\|_{\sigma}^{2-\tau}).
\end{align*}
Thus, by \eqref{a.7},
\begin{align*}
&\frac{\partial}{\partial P_{i}}
I_{\tau}\Big(\sum_{j=1}^{k} \alpha_{j} \delta_{P_{j}, t_{j}}+v\Big)\\
=&
-\alp_i^3\int_{\mathbb{S}^3}(K(P)-K(P_i))\delta_{P_i,t_i}^{2-\tau}
\frac{\partial \delta_{P_i,t_i}}{\partial P_i}
-\alp_i^3\int_{\mathbb{S}^3}K(P_i)\delta_{P_i,t_i}^{2-\tau}
\frac{\partial \delta_{P_i,t_i}}{\partial P_i}\\
&\quad+O(\tau^{1/2})
+O\left( \|v\|_{\sigma}\right)
+O\left( \tau^{-1/2}\|v\|_{\sigma}^{2-\tau}\right)\\
=&-\Gamma_6\nabla_{g_0}{K}(P_i)
+V_{P_i}(\tau,\alpha,t,P,v),
\end{align*}
where
\begin{align}\label{1.56}
\Gamma_6(\tau,\alpha, t,P,v)\geq \nu_{1}>0 \quad\hbox{with}\,
\nu_1 \hbox{ independent  of}\, \tau,
\end{align}
and
\begin{align}\label{1.34}
V_{P_i}(\tau,\alpha,t,P,v)=O(\tau^{1/2})
+O\left( \|v\|_{\sigma}\right)
+O\left( \tau^{-1/2}\|v\|_{\sigma}^{2-\tau}\right).
\end{align}

The existence of $\nu_{1}$ is proved below.
In fact, let $P_{i}$ be the south pole and make a stereographic projection $F$ to the
equatorial plane of $\mathbb{S}^3$ with $y=(y^{(1)},y^{(2)},y^{(3)})$
as the stereographic projection coordinates,
let $\widetilde{K}=K(F(y))$
and  $|J_{F}|:=(2/(1+|y|^{2}))^{3}.$
Then  we have $F(0)=P_{i}$ and
\begin{align*}
&\alp_i^3\int_{\mathbb{S}^3}(K(P)-K(P_i))\delta_{P_i,t_i}^{2-\tau}
\frac{\partial \delta_{P_i,t_i}}{\partial P_i}\\
=&
\alp_{i}^3\int_{\mathbb{R}^3}
t_{i}y(\wdt{K}(y)-\wdt{K}(0))
\omega_{0,t_{i}}^{4}
(|J_{F}|^{1/3}\omega_{0,t_{i}}^{-1})^{\tau}
\\
=&:\mathcal{L}=(\mathcal{L}^{(1)},\mathcal{L}^{(2)},\mathcal{L}^{(3)}),
\end{align*}
where
$\omega_{0,t_{i}}:=2t_{i}/(1+t_{i}^2|y|^2)$ is the solution of
$$
(-\Delta)^{1/2} \omega_{0,t_{i}}
= \omega_{0,t_{i}}^2 \quad \text { on }\, \mathbb{R}^{3}.
$$
For $j=1, 2, 3$, we have
\begin{align*}
\mathcal{L}^{(j)}
=&-\alp_{i}^3\int_{\mathbb{R}^3}
t_{i}y^{(j)}(\wdt{K}(y)-\wdt{K}(0))
\omega_{0,t_{i}}^{4}
(|J_{F}|^{1/3}\omega_{0,t_{i}}^{-1})^{\tau}
\\
=&
\alp_{i}^3\int_{\mathbb{R}^3}
t_{i}y^{(j)}(\nabla\wdt{K}(0)\cdot y+O(|y|^2))
\omega_{0,t_{i}}^{4}
(|J_{F}|^{1/3}\omega_{0,t_{i}}^{-1})^{\tau}
\\
=&\frac{1}{3}\alp_{i}^3\frac{\partial \wdt{K}}{\partial y^{(j)}}(0)\int_{\mathbb{R}^3}
t_{i}|y|^2\omega_{0,y_{i}}^{4}
(|J_{F}|^{1/3}\omega_{0,t_{i}}^{-1})^{\tau},
\end{align*}
thus,
\begin{align*}
\mathcal{L}=&\nabla\widetilde{K}(0)
\Big\{\frac{\alp_{i}^3}{3}
\int_{\mathbb{R}^3}t_{i}|y|^2
\omega_{0,t_{i}}^4
(|J_{F}|^{1/3}\omega_{0,t_{i}}^{-1})^{\tau}
+O(\tau^{1/2})\Big\}\\
=&\nabla_{g_0}K(P_i)
\frac{2\alp_{i}^3}{3}
\Big\{\int_{\mathbb{R}^3}t_{i}|y|^2
\omega_{0,t_{i}}^4
(|J_{F}|^{1/3}\omega_{0,t_{i}}^{-1})^{\tau}
+O(\tau^{1/2})\Big\}.
\end{align*}
It follows from
$
t_{i}^{-\tau}\leq(|J_{F}|^{1/3}\omega_{0,t_{i}}^{-1})^{\tau}\leq t_{i}^{\tau}
$
that
\begin{align*}
\int_{\mathbb{R}^3}
t_{i}|y|^2\omega_{0,t_{i}}^4
(|J_{F}|^{1/3}\omega_{0,t_{i}}^{-1})^{\tau}
\geq&t_{i}^{-\tau}
\int_{\mathbb{R}^3} t_{i}|y|^2\omega_{0,t_{i}}^4
\rightarrow
\int_{\mathbb{R}^3}\frac{|y|^2}{(1+|y|^2)^{3}},
\end{align*}
as $\tau \rightarrow 0.$
This ensures the existence of $\nu_{1}.$
We have proved Proposition \ref{lem7}.
\end{proof}

We now apply Propositions \ref{prop1}, \ref{lem5}, \ref{lem6}, \ref{lem7}
and construct a family of homotopy Id+compact operators
to obtain the degree-counting formula of the solutions to the subcritical equation \eqref{subequ} on
$\Sigma_{\tau}(\overline {P}_{1},\cdots,\overline{P}_{k}).$

\begin{proof}[Proof of Theorem \ref{thm3}]
Given $\tau>0$ and $K\in \mathscr{A}$, let $\mathscr{K}^{-}$ be as in \eqref{1.65}
and $\Sigma_{\tau}(\overline{P}_{1},\cdots,\overline{P}_{k})$ be as in \eqref{stau}
for the given $\overline{P}_{1},\cdots,\overline{P}_{k}\in \mathscr{K}^{-}.$

For $u=\sum_{i=1}^{k}\alp_{i}\delta_{P_i,t_i}+v\in \Sigma_{\tau}(\overline{P}_{1},
\cdots,\overline{P}_{k}),$  we have
\begin{align*}
T_{u}H^{\sigma}(\mathbb{S}^3)=E_{P,t}\bigoplus
 \mathrm{span}\{\delta_{P_i,t_i},\frac{\partial \delta_{P_i,t_i}}{\partial t_i},
 \frac{\partial \delta_{P_i,t_i}}{\partial P_i} \}.
\end{align*}
Since $I'_{\tau}(u)\in T_{u}H^{\sigma}(\mathbb{S}^3),$  there exist $\xi \in E_{P,t},$ $\eta\in \mathrm{span}\big\{\delta_{P_i,t_i},\frac{\partial \delta_{P_i,t_i}}{\partial t_i},
 \frac{\partial \delta_{P_i,t_i}}{\partial P_i} \big\} $ such that
\begin{align*}
I'_{\tau}(u)=\xi+\eta.
\end{align*}
From \eqref{1.66},  we  obtain, for all $\varphi\in E_{P,t}$,
\begin{align}\label{1.95}
\left\langle \xi, \varphi\right\rangle
=I'_{\tau}(u)\varphi=
f_{\tau}(\varphi)+2Q_{\tau}(v,\varphi)+
\left\langle V_{v}(\tau,\alpha,t,P,v),\varphi\right\rangle,
\end{align}
where $\|V_{v}(\tau,\alpha,t,P,v)\|_{\sigma}\leq C\|v\|_{\sigma}^{2-\tau}.$
Replacing  $\varphi$  by $v$ in \eqref{1.95} and using \eqref{1.68}, we have
\begin{align*}
\|\xi\|_{\sigma}\geq \delta_{0}\|v\|_{\sigma}-
\|{f}_{\tau}\|_{\sigma}-O(\|v\|_{\sigma}^{2-\tau})
\geq \frac{\delta_{0}}{2}\|v\|_{\sigma}-\|f_{\tau}\|,
\end{align*}
where $\delta_{0}$ is as in \eqref{1.68}.

Let $\beta=(\beta_{1},\cdots,\beta_{k}),$
$\beta_{i}=\alpha_{i}-1/K(P_{i})$ be as in Proposition \ref{lem5}, we define
\begin{align*}
\widehat{\Sigma}_{\tau}=
\Big\{ u=\sum_{i=1}^{k}\alpha_{i}\delta_{P_i,t_i}+v
\in \Sigma_{\tau}(\overline{P}_{1},\cdots,\overline{P}_{k}):\|v\|_{\sigma}<\tau|\log\tau|^3,\,
|\beta|<\tau|\log\tau|^2\Big\}.
\end{align*}
It follows from Proposition \ref{prop1} and \eqref{1.18} that
  $$I'_{\tau}(u)\ne 0, \quad\forall\, u\in \Sigma_{\tau}(\overline{P}_{1},\cdots,\overline{P}_{k})
 \backslash \widehat{\Sigma}_{\tau}.$$

For  $u=\sum_{i=1}^{k}\alpha_{i}\delta_{P_i,t_i}+v\in \widehat{\Sigma}_{\tau},$
by using \eqref{1.26}, we have
\begin{align}
\left\langle  \eta, \delta_{P_i,t_i}\right\rangle
 =&I_{\tau}'(u)\delta_{P_i,t_i}\notag\\
 =&
 \alpha_{i}\int_{\mathbb{S}^3}\delta_{P_i,t_i}^3
 +\frac{1}{2}\sum_{j\ne i}\alpha_{j}
\int_{\mathbb{S}^3}\delta_{P_i,t_i}^{2}\delta_{P_j,t_j}\notag \\
&\quad
-\int_{\mathbb{S}^3}
K\Big|\sum_{j=1}^{k}\alpha_j\delta_{P_j,t_j}+v\Big|^{1-\tau}
\Big(\sum_{j=1}^{k}\alpha_{j}\delta_{P_j,t_j}+v\Big)\delta_{P_i,t_i}\notag\\
=&
\frac{\partial }{\partial \alpha_{i}}I_{\tau}\Big(\sum_{j=1}^{k}\alpha_{j}\delta_{P_j,t_j}+v\Big)\notag\\
=&-|\mathbb{S}^3|\beta_{i}+V_{\alp_i}(\tau,\alp,t,P,v),\label{1.98}
 \end{align}
where $V_{\alp_i}$ satisfying
\begin{align*}
V_{\alpha_{i}}(\tau,\alp,t,P,v)= &
O(|\bt|^2)
+O(\tau|\log\tau|)
+O(\|v\|_{\sigma}^{2-\tau})\\
\leq&C\left(|\beta|^2+\tau|\log \tau|\right).
\end{align*}

It follows from \eqref{1.31} and \eqref{1.32} that
\begin{align}
\Big\langle \eta, \frac{\partial \delta_{P_i,t_i}}{\partial t_i}\Big\rangle
=&I'_{\tau}(u)\,\frac{\partial \delta_{P_i,t_i}}{\partial t_i}\notag\\
=&
\frac{1}{2}\sum_{j\ne i}\alpha_{j}
\frac{\partial }{\partial t_i}\int_{\mathbb{S}^3}
\delta_{P_j,t_j}^2
\delta_{P_i,t_i}
\notag\\
&\quad
-\int_{\mathbb{S}^3}
K\Big|\sum_{j=1}^{k}\alpha_j\delta_{P_j,t_j}+v\Big|^{1-\tau}
\Big(\sum_{j=1}^{k}\alpha_{j}\delta_{P_j,t_j}+v\Big)
\frac{\partial \delta_{P_i,t_i}}{\partial t_i}\notag\\
=&\frac{1}{\alpha_{i}}
\frac{\partial }{\partial t_i}
I_{\tau}\Big(\sum_{j=1}^{k}\alpha_{j}\delta_{P_j,t_j}+v\Big)\notag\\
=&\frac{1}{\alpha_{i}}
\Big\{
\frac{\Gamma_{3}}{K(P_i)^2}\frac{\tau}{t_{i}}
+\frac{\Gamma_{4}\Delta_{g_0}K(P_{i})}{K(P_i)^{3}}\frac{1}{t_i^3} \Big.\notag\\
&\phantom{=\;\;}\Big.
+\sum_{j\ne i}\frac{\Gamma_{5}G_{P_{i}}(P_{j})}{K(P_i)K(P_j)}
\frac{1}{t_i^2t_j}
+V_{t_i}(\tau,\alpha, t,P,v)
\Big\},\label{1.99}
\end{align}
where
$|V_{t_i}(\tau, \alpha, t,P,v)|=O(\tau^{3/2}|\log \tau|)$.

Applying \eqref{1.33} and \eqref{1.34}, we obtain
\begin{align}\label{2.1}
\Big\langle \eta, \frac{\partial \delta_{P_i,t_i}}{\partial P_{i}}\Big\rangle
=&I_{\tau}'(u)\frac{\partial \delta_{P_i,t_i}}{\partial P_{i}}\notag\\
=&\frac{1}{\alpha_{i}}
\frac{\partial }{\partial P_i}
I_{\tau}\Big(\sum_{i=1}^{k} \alpha_{i}\delta_{P_i,t_i}+v\Big)\notag\\
=&\frac{1}{\alpha_{i}}
\left\{
-\Gamma_{6}\nabla_{g_0}K(P_{i})
+V_{P_i}(\tau, \alpha, t,P, v)
\right\},
\end{align}
with $V_{P_i}$ satisfying $\left|V_{P_i}(\tau, \alpha, t, P,v)\right|\leq C\tau^{1/2}$.

Under the conditions  \eqref{1.95}--\eqref{2.1} stated above, we define a family of
operators on $\wdt{\Sigma}_{\tau}$ as follows:
for $u=\sum_{i=1}^{k}\alpha_{i}\delta_{P_i,t_i}+v\in \widehat{\Sigma}_{\tau}$ given
above,
$$X_{\theta}(u):=\xi_{\theta}(u)+\eta_{\theta}(u),\quad 0\leq\theta\leq 1,$$
where for any $\varphi\in E_{P,t},$
\begin{align}\label{xi}
\langle
\xi_{\theta},\varphi
\rangle:=&
\theta f_{\tau}(\varphi)+(1-\theta)\langle v,\phi\rangle
+2\theta Q_{\tau}(\varphi, v)
+\theta\langle V_{v}(\tau,\alpha, t,P,v),\varphi \rangle,
\end{align}
and
\begin{align}
\begin{aligned}\label{eta}
\langle \eta_{\theta}, \delta_{P_{i},t_i}  \rangle
:=&-2\pi |\mathbb{S}^3|
\Big\{ \alpha_{i}-\frac{\theta}{K(P_{i})}-\frac{(1-\theta)}{K(\overline{P}_{i})} \Big\}
+\theta V_{\alpha_{i}}(\tau, \alpha,t,P,v),\\
\Big\langle \eta_{\theta}, \frac{\partial \delta_{P_i,t_i}}{\partial t_i}  \Big\rangle
:=&\Big\{
\frac{1}{\alpha_{i}}+(1-\theta)
\Big\}
\Big\{
\frac{\Gamma_{3}}{K(P_i(\theta))^2}\frac{\tau}{t_{i}}
+\frac{\Gamma_{4}\Delta_{g_0}K(P_{i}(\theta))}{K(P_i(\theta))^{3}}\frac{1}{t_i^3} \Big.\\
&\phantom{=\;\;}\Big.
+\sum_{j\ne i}\frac{\Gamma_{5}G_{P_{i}(\theta)}(P_{j}(\theta))}{K(P_i(\theta))K(P_j(\theta))}
\frac{1}{t_i^2t_j}
\Big\}+\frac{\theta}{\alpha_{i}}V_{t_i}(\tau,\alpha, t,P,v),\\
\Big\langle
\eta_{\theta}, \frac{\partial \delta_{P_{i}, t_{i}}}{\partial P_{i}}
\Big\rangle
:=&-\Big\{
(1-\theta)+\frac{\theta}{\alpha_{i}}\Gamma_{6}
\Big\}
\nabla_{g_{0}}K(P_{i})+\frac{\theta}{\alpha_{i}}
V_{P_{i}}(\tau, \alpha, t,P, v),
\end{aligned}
\end{align}
where $P_{i}(\theta)$ is the short geodesic trajectory on $\mathbb{S}^3$ with $
P_{i}(0)=\overline{P}_{i},$
$P_{i}(1)=P_{i}.$

Obviously, $X_{1}=I_{\tau}'(u)=\xi+\eta.$  From Sobolev compact embedding theorem
and the explicit forms of $V_{v}, V_{\alpha_{i}}, V_{t_{i}}, V_{P_{i}}$, we conclude  that $I_{\tau}'(u)$ is of the form Id+compact on $\widehat{\Sigma}_{\tau}$.
Since $\Omega_{\varepsilon_{0}/2}$ in the definition
 of $\widehat{\Sigma}_{\tau}$ is a finite dimensional
 submanifold of $H^{\sigma}(\mathbb{S}^3),$ we easily obtain from \eqref{xi} and \eqref{eta}
 that $X_{\theta}$ $(0\leq \theta\leq 1)$ is the form Id+compact.
Furthermore, we have $X_{\theta}\ne 0 $  on $\partial \widehat{\Sigma}_{\tau},$
$\forall\, 0\leq \theta\leq 1.$
In fact, for a given $u=\sum_{i=1}^{k} \alpha_{i}
\delta_{P_{i},t_{i}}+v\in \partial \widehat{\Sigma}_{\tau},$
 we obtain $\xi\ne0$ by using \eqref{1.95} and \eqref{2.2}.
When $\theta=0,$ $\xi_{0}=v\ne 0.$
It follows from  \eqref{xi} that $\xi_{\theta}\ne 0,$ $\forall \, 0<\theta<1.$

 By the homotopy invariance of the Leray-Schauder degree, we have
 \begin{align}\label{1.36}
 \deg_{H^{\sigma }}(X_{1},\widehat{\Sigma}_{\tau}, 0)=
 \deg_{H^{\sigma}}(X_{0},\widehat{\Sigma}_{\tau},0).
 \end{align}
From \eqref{xi} and \eqref{eta}, we can obtain, for
$u=\sum_{i=1}^{k}\alpha_{i}\delta_{P_{i},t_{i}}+v\in \widehat{\Sigma}_{\tau},$
$$X_{0}(u)=\xi_{0}(u)+\eta_{0}(u),$$
where $\xi_{0}\in E_{P,t}, \eta_{0}\in \mathrm{span}\{\delta_{P_{i},t_i},\frac{\partial \delta_{P_{i},t_{i}}}{\partial t_{i}},\frac{\partial \delta_{P_i,t_{i}}}{\partial P_{i}}\}$ satisfy
\begin{align}
\begin{aligned}\label{x0}
\langle\xi_{0},\varphi\rangle
=&
\langle v,\varphi\rangle,
\\
\langle \eta_{0}, \delta_{P_{i},t_i}  \rangle
=&
-2\pi |\mathbb{S}^3|
(
 \alpha_{i}-K(\overline{P}_{i})^{-1}),\\
 \Big\langle \eta_{0}, \frac{\partial \delta_{P_i,t_i}}{\partial t_i}  \Big\rangle
=&
\frac{\Gamma_{3}}{K(\overline{P}_{i})^2}\frac{\tau}{t_{i}}
+\frac{\Gamma_{4}\Delta_{g_0}K(\overline{P}_{i})}{K(\overline{P}_i)^{3}}\frac{1}{t_i^3}
+\sum_{j\ne i}\frac{\Gamma_{5}G_{\overline{P}_{i}}(\overline{P}_{j})}{K(\overline{P}_i)K(\overline{P}_j)}
\frac{1}{t_i^2t_j}
,\\
\Big\langle
\eta_{0}, \frac{\partial \delta_{P_{i}, t_{i}}}{\partial P_{i}}
\Big\rangle
=&-\nabla_{g_{0}}K(P_{i}).
\end{aligned}
\end{align}
 Recalling the definition of $M(\overline{P}_{1},\cdots,\overline{P}_{k}),$
 which is simply written as $(M_{ij}).$  By \eqref{2.3}, we can easily get
 $$X_{0}(u)=0
\quad \hbox{on}\, \widehat{\Sigma}_{\tau},$$
if and only if
\begin{align}
\begin{aligned}\label{2.3}
&\alpha_{i}=K(\overline{P}_{i})^{-1}, \quad P_{i}=\overline{P}_{i},\quad v=0,\\
&\frac{4}{K(P_{i})^2}\frac{\tau}{t_{i}}-\Big(M_{ii}\frac{1}{t_{i}^3}
+\sum_{j=1}^{k}M_{ij}\frac{1}{t_{i}^{2}t_{j}}\Big)=0.
\end{aligned}
\end{align}

For any $(s_1,\cdots, s_{k})\in \mathbb{R}^{k},$ $s_{i}>0,$ $i=1,\cdots, k,$ we define
\begin{align*}
&F(s_{1},\cdots,s_{k}):=-
\sum_{j=1}^{k}\Big(\frac{4\tau}{K(\overline{P}_{j})^{2}}\log s_{j}
\Big)
+\frac{1}{2}\sum_{i=1}^{k}\Big(M_{ii}s_{i}^2
+\sum_{j=1}^{k}M_{ij}s_{i}s_{j}\Big),
\end{align*}
and for  $t_{i}=s_{i}^{-1},$
$$
\widehat{F}(t_1,\cdots,t_{k}):=F(s_{1},\cdots,s_{k}).
$$
The derivative with respect to $t_{i}$ is
\begin{align*}
\frac{\partial \widehat{F}}{\partial t_{i}}
(t_{1},\cdots,t_{k})=\frac{4}{K(\overline{P}_{i})^2}\frac{\tau}{t_{i}}-\Big(M_{ii}\frac{1}{t_{i}^3}
+\sum_{j=1}^{k}M_{ij}\frac{1}{t_{i}^{2}t_{j}}\Big),
\end{align*}
combining this and \eqref{x0}, we have
\begin{align*}
\Big\langle
\eta_{0}, \frac{\partial \delta_{P_i,t_i}}{\partial t_{i}}
\Big\rangle=\frac{\pi|\mathbb{S}^2|}{3}\frac{\partial \widehat{F}}{\partial t_{i}}(t_{1},\cdots, t_{k}).
\end{align*}

It is obvious that $\nabla\widehat{F}(t_{1},\cdots, t_{k})=0$ if and only if $\nabla F(s_{1},\cdots,s_{k})=0$.
A trivial verification shows that $F(s_{1},\cdots,s_{k})$ is a strictly convex function, and having a unique critical
point in the first quadrant.
It follows that $\widehat{F}(t_1,\cdots,t_k)$ has unique critical point in the first quadrant with Morse index
zero. Hence $X_{0}$ has precisely one nondegenerate zero in $\widehat{\Sigma}_{\tau}$.
Furthermore, by \eqref{2.3} we can easily obtain
  \begin{align}\label{1.35}
  \deg_{H^{\sigma}}(X_{0},\widehat{\Sigma}_{\tau},0)
  =(-1)^{k+\sum_{i=1}^{k}i(\overline{P}_{i})}.
  \end{align}
Combining \eqref{1.35} and \eqref{1.36},  we complete the
proof of Theorem \ref{thm3}.
\end{proof}

Recall the  definition of $\mathscr{O}_{R}$ in \eqref{1.82}. For $\delta>0$ suitably small, define
\begin{align}\label{1.85}
\mathscr{O}_{R,\delta}:=\{u\in H^{\sigma}(\mathbb{S}^{3}):
\inf_{\omega\in \mathscr{O}_{R}}\|u-\omega\|_{\sigma}<\delta\}.
\end{align}

\begin{proposition}\label{prop4}
Let $K\in \mathscr{A}$  be a Morse function and
$0<\tau_{0}\leq \tau \leq4/(n-2\sigma)-\tau_{0}.$
Then there exists some constants $C_{0}>0,$ $\delta_{0}>0$
depending only on $\tau_{0},$ $\min_{\mathbb{S}^3} K,$
and the modulo of the continuity of $K,$
such that
\begin{align}\label{0.21}
\{ u\in H^{\sigma}(\mathbb{S}^3):u>0~~  a.e.,\,I'_{\tau}(u)=0
\}\subset \mathscr{O}_{C_{0}, \delta_{0}}.
\end{align}
 Furthermore, we have $I_{\tau}'(u)\ne 0$ on $\partial \mathscr{O}_{C_0,\delta_{0}}$ and
\begin{align}\label{1.37}
\deg_{H^{\sigma} }\left(  u-P_{\sigma}^{-1}(K|u|^{1-\tau}u),
\mathscr{O}_{C_{0},\delta_{0}}, 0\right)=-1.
\end{align}
\end{proposition}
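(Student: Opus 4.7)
The plan is to prove Proposition \ref{prop4} in three steps: a uniform a priori bound for positive subcritical solutions, verification that $\partial \mathscr{O}_{C_0,\delta_0}$ carries no zero of $F_\tau(u) := u - P_\sigma^{-1}(K|u|^{1-\tau}u)$, and a homotopy computation of the Leray--Schauder degree. The strategy parallels the degree-counting argument of Li \cite{LYY} in the local case.

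First I would establish the a priori bound by a standard blow-up argument. Suppose toward contradiction there is a sequence of positive solutions $u_i$ of $P_\sigma u_i = K u_i^{2-\tau_i}$ with $\tau_i \in [\tau_0,\, 2-\tau_0]$ and $\max_{\mathbb{S}^3} u_i \to \infty$. Rescaling at a maximum point and using the Caffarelli--Silvestre extension together with Propositions \ref{prop5}, \ref{prop6} and \ref{prop8}, the limit is a positive bounded solution $v$ on $\mathbb{R}^3$ of $(-\Delta)^{1/2} v = c\, v^{p_*}$ with $p_* = 2 - \tau_* \in [\tau_0,\, 2-\tau_0]$ strictly subcritical (since $p_* < 2 = (n+2\sigma)/(n-2\sigma)$); the Liouville theorem for subcritical semilinear fractional Laplacians on $\mathbb{R}^n$ then yields a contradiction. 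Combining with the Harnack inequality of Proposition \ref{prop5} and the Schauder regularity from \cite{JLX} gives $1/C_0 \leq u \leq C_0$ and $\|u\|_{C^{2,\alpha}(\mathbb{S}^3)} \leq C_0$, with $C_0$ depending only on $\tau_0$, $\min_{\mathbb{S}^3} K$ and the modulus of continuity of $K$. In particular every positive solution sits strictly inside $\mathscr{O}_{C_0}$.

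Second, choosing $\delta_0 > 0$ small enough, no positive solution can reach $\partial \mathscr{O}_{C_0, \delta_0}$. For possibly sign-changing zeros of $F_\tau$ inside $\mathscr{O}_{C_0, \delta_0}$, elliptic bootstrap yields $u \in C^{2,\alpha}$, and the $H^\sigma$-proximity of $u$ to a function bounded below by $1/C_0 > 0$ together with the strong maximum principle for $P_\sigma$ applied to $P_\sigma u = K|u|^{1-\tau} u$ forces $u > 0$. Thus every zero of $F_\tau$ in $\mathscr{O}_{C_0,\delta_0}$ is in fact positive and strictly interior, so $F_\tau \neq 0$ on $\partial \mathscr{O}_{C_0,\delta_0}$.

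For the degree computation, the uniform a priori bound ensures that along any continuous path $(\tau(s), K(s))$ with $\tau(s) \in [\tau_0,\, 2-\tau_0]$ and $K(s) > 0$ connecting $(\tau, K)$ to a reference pair $(\tau_\sharp, K_\sharp)$, the set $\mathscr{O}_{C_0, \delta_0}$ (after possibly enlarging $C_0$) remains admissible and admits no zero on its boundary, so homotopy invariance makes $\deg_{H^\sigma}(F_\tau, \mathscr{O}_{C_0, \delta_0}, 0)$ independent of $(\tau, K)$ along the path. At the reference endpoint the degree is evaluated as $\sum_{u_0}(-1)^{m(u_0)}$ over isolated non-degenerate positive critical points $u_0$ of $I_\tau$, where $m(u_0)$ is the Morse index, giving $-1$. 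The main obstacle is precisely this last step: one must select a reference configuration whose Morse structure is transparent enough to produce the total $-1$, while avoiding the degeneracy of the linearization at the linear case $\tau = 1$ (where the sub/super-linear transition occurs) and accounting for every positive critical point when $K$ is only a generic Morse function in $\mathscr{A}$; this is typically navigated by a two-parameter homotopy or by perturbing a constant reference $K_\sharp$ inside $\mathscr{A}$ so that the endpoint retains a unique positive critical point with computable index.
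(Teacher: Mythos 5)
Your first two steps are sound and in fact take a more self-contained route than the paper: you derive the uniform a priori bound for $\tau\in[\tau_0,2-\tau_0]$ by rescaling plus a Liouville theorem for the strictly subcritical fractional equation, whereas the paper deduces the containment \eqref{0.21} from Proposition \ref{prop3} together with the observation that for $\tau\geq\tau_0$ the concentration parameters $t_i\sim\tau^{-1/2}$ in $\Sigma_\tau$ are bounded, so the ``bubble'' sets are themselves contained in $\mathscr{O}_{C_0,\delta_0}$. Either route works for \eqref{0.21}, and your positivity argument for zeros of $F_\tau$ on $\mathscr{O}_{C_0,\delta_0}$ is standard (though it is cleaner to test the equation with $u^-$ and use the smallness of $\|u^-\|_\sigma$ than to invoke a maximum principle for $P_\sigma u=K|u|^{1-\tau}u$, whose right-hand side changes sign with $u$).

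The genuine gap is the last step, which you yourself flag as ``the main obstacle'': you never actually evaluate the degree at any reference configuration, so the value $-1$ --- the entire content of \eqref{1.37} --- is not established. Saying that one ``must select a reference configuration whose Morse structure is transparent'' and that this ``is typically navigated by a two-parameter homotopy'' is a description of the difficulty, not a resolution of it. The paper's resolution is concrete: deform $K$ linearly to $K^*(x)=x^{(4)}+2$, which lies in $\mathscr{A}$ and is Morse with $\mathscr{K}^-=\{N\}$ ($N$ the maximum point), then push $\tau$ down to $0^+$. The Kazdan--Warner identity applied with the first spherical harmonic $x^{(4)}$ shows the critical equation with $K^*$ has no solution, so for small $\tau$ the bounded component $\mathscr{O}_R$ carries degree $0$, and every solution lies in the single concentration set $\Sigma_\tau(N)$, whose degree is supplied by Theorem \ref{thm3}; summing gives $-1$. (An alternative you gesture at but do not carry out: deform $K$ to a constant, use symmetry/uniqueness to see the only solution is the constant one, and check that for $p=2-\tau\in(1,2)$ its linearization $P_\sigma-p$ has exactly one negative eigenvalue since the spectrum of $P_\sigma$ on $\mathbb{S}^3$ is $\{k+1\}$, giving Morse index $1$ and degree $-1$.) Without executing one of these computations, your proposal proves the a priori estimate and the admissibility of the homotopy but not the degree formula itself.
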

\begin{proof}
From  Proposition \ref{prop3},  we know that for $\tau>0$ small there exists some
   suitable value of $\nu_{0}, A, R$ such that
  $u$ satisfying $u\in H^{\sigma}(\mathbb{S}^3),
 u>0, a.e., I'_{\tau}(u)=0$ are
 either in $\mathscr{O}_{R}$ or in
 some $\Sigma_{\tau}(q^{(1)},\cdots, q^{(k)}).$
 Combining \eqref{stau}, \eqref{1.12},  \eqref{a.2}, and \eqref{p1p1},
    we conclude that there exists some
positive constants
$C_{0}$ and $\delta_{0}$ such that
  \eqref{0.21} holds.

For $K^{*}(x)=x^{(4)}+2,$ $x=(x^{(1)},x^{(2)},x^{(3)},x^{(4)})
\in\mathbb{S}^3\subset\mathbb{R}^{4}$ and $t\in (0,1),$ we consider $K_{t}=tK+(1-t)K^{*}.$
By the homotopy invariance of the Leray-Schauder degree, we only need to establish
\eqref{1.37} for $K^*$ and $\tau$ very small.
It is easy to see that $K^{*}\in \mathscr{A}$ is a Morse function. The proof of \eqref{1.37} is straightforward by  the Kazdan-Warner condition and Theorem \ref{thm3}.
\end{proof}

\subsection{The proof of the Theorems \ref{thm4} and \ref{thm2}}
Using Theorem \ref{thm3} and Proposition \ref{prop4}, we next prove Theorem \ref{thm4}.
\begin{proof}[Proof of Theorem \ref{thm4}]
Using Theorem \ref{thm2.1} and  the homotopy invariance of the Leray-Schauder degree,
for $\tau>0$ sufficiently small,
 we obtain that there exist a constant $R$ such that,
 \begin{align}\label{2.4}
 \deg_{C^{2,\alpha}}(u-P_{\sigma}^{-1}(Ku^{2}), \mathscr{O}_{R}, 0)
 =
\deg_{C^{2,\alpha}}(u-P_{\sigma}^{-1}(K|u|^{1-\tau}u), \mathscr{O}_{R}, 0).
 \end{align}

For $C_{0}\gg R,$ $0<\delta_{1}\ll \delta_{0},$
and  $\tau_{0}$ be given by  Proposition \ref{prop4}.
Using \eqref{1.37}, Proposition \ref{prop3}, \eqref{1.75}, and
the excision property of the degree, we have
\begin{align}\label{2.5}
\deg_{H^{\sigma}}(u-P_{\sigma}^{-1}(K|u|^{1-\tau}u), \mathscr{O}_{R,\delta_{1}}, 0)
=\mathrm{Index} (K).
\end{align}
As in the proof of Proposition \ref{prop4}, one can check that
there  are no critical points of $I_{\tau}$ in $\overline{\mathscr{O}_{R,\delta_{1}}}\backslash \mathscr{O}_{R}.$
Using the same proof idea as Li \cite[Theorem B.2]{LYYJ}, we can easily get
\begin{align}\label{2.6}
\deg_{C^{2,\alpha}}(u-P_{\sigma}^{-1}K(|u|^{1-\tau}u),\mathscr{O}_{R},0)
=\deg_{H^{\sigma}}(u-P_{\sigma}^{-1}K(|u|^{1-\tau}u),\mathscr{O}_{R,\delta_{1}},0).
\end{align}
It follows from \eqref{2.4}--\eqref{2.6} that for $R>C,$ \eqref{1.38} is proved.
Theorem \ref{thm4} follows from the above.
\end{proof}

Using the theory of linear algebra, we give the proof of  Corallary \ref{cor1}.
\begin{proof}[Proof of the Corollary \ref{cor1}]
If $\sharp \mathscr{K}^{-}=1,$ from the proof of Theorem \ref{thm4},
we can easily obtain the conclusion.
If for any distinct $P, Q\in \mathscr{K}^{-},$ $\Delta_{g_{0}} K(P)
\Delta_{g_{0}}K(Q)< 9K(P)K(Q),$  we claim that
there is no integer $k\geq 2$ such that $q^{(1)},\cdots, q^{(k)}\in
 \mathscr{K}^{-}$, $\mu(M(q^{(1)},\cdots, q^{(k)}))>0.$

 In fact, for any distinct $q^{(1)},\cdots,
 q^{(k)}\in \mathscr{K}^{-},$ $k\geq 2,$
 $\mu(M(q^{1},\cdots,q^{(k)}))>0$ if and only if
 $M(q^{(1)},\cdots,q^{(k)})$ is a positive definite matrix.
 By \eqref{1.97}, we have the fact that 2-order principle minor determinant  strictly less than zero.
Therefore, we proved the claim.
Obviously, Corollary \ref{cor1} follows from  the claim.
\end{proof}

We next prove Theorem \ref{thm2}.

\begin{proof}[Proof of the Theorem \ref{thm2}]
Since  the Morse functions in $C^{2}(\mathbb{S}^{3})^{+}\backslash\mathscr{A}=\partial \mathscr{A}$ are dense in $\partial \mathscr{A},$ without loss of generality we consider
the case that $K\in \partial\mathscr{A}$ is a Morse function.
 First recall the definition  of $\mathscr{K}$ and $\mathscr{K}^{+},$
 we can assume here  $\mathscr{K}\backslash\mathscr{K}^{+}=
 \{ q^{(1)}, \cdots, q^{(m)}\},$ $m\in \mathbb{N}_{+}.$
 From the definition of $\mathscr{A}$
 and $K\in \partial \mathscr{A},$ we know that
there exists $1\leq i_{1}<\cdots<i_{k}\leq m,$ $k\geq 1,$
 such that
 \begin{align}\label{2.9}
 \mu(M( q^{(i_1)}, \cdots, q^{(i_{k})}))=0.
 \end{align}
By perturbing the function $K$ near its some critical
points to change the Hessian matrix of $K$ at these points,
we obtain a sequence of ${K_{\ell}}$ satisfying:
 $K_{\ell}\rightarrow K$ in $C^{2}(\mathbb{S}^{3})^{+}$ as $\ell\to\infty;$
 ${K_{\ell}}$ are identically the same as $K$ except in some small
 balls and have the same critical points with the
same Morse index; there is only one such $(i_{1},\cdots, i_{k})$
such that \eqref{2.9} is true for any $\ell.$
 Refer to the perturbation method as in the proof of
  Li \cite[Theorem 0.8]{LYY} for more details.
  Using the same $C^{2}$ perturbation method for $K_{\ell},$  we can obtain a smooth, one-parameter
  family of Morse functions $\{K_{\ell,t}\}$ $(-1\leq t\leq 1)$ with the following
   properties:
\begin{itemize}
  \item [(a)]$K_{\ell,t}$ $(-1\leq t\leq 1)$ are identically the same as $K_{\ell}$ except in some small
 balls around $q^{(i_1)},\cdots, q^{(i_k)}$ and $K_{0}=K_{\ell}.$
 $K_{\ell,t}$  have the same critical points with the same Morse index  for any $-1\leq t\leq 1.$
  \item [(b)] $\mu(M_{\ell,t}( q^{(j_1)},\cdots, q^{(j_s)}))$ have the same sign for $-1<t<1$
for any $1\leq j_{1}<\cdots<j_{s}\leq m,$ $(j_{1},\cdots, j_{s})\ne (i_1,\cdots, i_{k}).$
$\mu(M_{\ell,t}( q^{(i_1)},\cdots, q^{(i_{k})}))<0$ for $-1<t<0,$
and $\mu(M_{\ell, t}(q^{(i_{1})}, \cdots, q^{(i_{k})})) >0$ for $0<t<1.$
\end{itemize}
It is easily seen that  $K_{\ell,t}\in \mathscr{A}$ when $t\ne 0.$  From the definition of
$\mathrm{Index},$  we have
\begin{align*}
\mathrm{Index}(K_1)
=\mathrm{Index}(K_{-1})+(-1)^{k-1+\sum_{j=1}^{k}i(q^{(i_{j})})},
\end{align*}
thus, $\mathrm{Index}(K_{1})\ne \mathrm{Index}(K_{-1}).$ By the homotopy invariance of
the Leray-Schauder degree, there exists $t_i\rightarrow 0$ and $v_{\ell,i}\in \mathscr{M}_{K_{\ell,t_i}},$
such that
\begin{align*}
\lim_{i\rightarrow \infty} \|v_{\ell,i}\|_{C^{2, \alpha}(\mathbb{S}^3)}
=\infty\quad \hbox{or}\quad \lim_{i\rightarrow \infty} (\min_{\mathbb{S}^3} v_{\ell,i})=0.
\end{align*}
Combining the Harnack inequality in \cite[Lemma 4.3]{JLX} and
Schauder estimates in \cite[Theorem 2.11]{JLX}, we deduce that \eqref{1.72}
holds. It follows from Theorem \ref{thm2.1},  $K_{\ell,t}\in \mathscr{A}$$(t\ne 0)$
and Theorem \ref{thm1} that $K_{\ell,t_{i}}\to K_{\ell}$ and $\{v_{\ell,i}\}$ blows up exactly at $k$ points
$q^{(i_{1})},\cdots,q^{(i_{k})}.$

From the above,  we know that there exists a sequence of $K_{i}\to K$ in $C^{2}(\mathbb{S}^3),$
$v_{i}\in \mathscr{M}_{K_{i}}$ such that $\{v_{i}\}$ blows up at precisely the $k$ points
$q^{(i_{1})},\cdots,q^{(i_{k})}.$
We have thus proved Theorem \ref{thm2}.
\end{proof}

\begin{proof}[Proof of Theorem \ref{thm5}]
By using Theorem \ref{thm1} we can prove the Part (i) of Theorem \ref{thm5}.
The Part (ii) of Theorem \ref{thm5} is similar to the proof of Theorem \ref{thm2}, we omit it here.
\end{proof}

\appendix
\section{Appendix}\label{sec5}
In this appendix, we provide some elementary calculations which have been used in the proof
of Theorem \ref{thm4}.

\begin{lemma}\label{cplema1}
Let  $\alpha \geq 2,$ there exists a positive constant
$C$ depending only on $\alpha$ such that, for any $a \geq 0,$ $b \in \mathbb{R},$
$$
\Big|
| a+b|^{\alpha-1}(a+b)-a^{\alpha}-\alpha a^{\alpha-1} b-\frac{\alpha(\alpha-1)}{2} a^{\alpha-2} b^{2}
 \Big|
  \leq C\left(|b|^{\alpha}+a^{\gamma}|b|^{\alpha-\gamma}\right),
$$
where $\gamma=\max\{0,\alpha-3\}.$
\end{lemma}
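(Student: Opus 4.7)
I would prove this by splitting into two regimes according to the relative size of $|b|$ and $a$, which separates the ``small perturbation'' Taylor regime from the ``large perturbation'' brute-force regime. Set $f(t) := |t|^{\alpha-1} t = \mathrm{sgn}(t) |t|^{\alpha}$, so that $f'(t) = \alpha |t|^{\alpha-1}$ and $f''(t) = \alpha(\alpha-1) |t|^{\alpha-2}\mathrm{sgn}(t)$; the identity to be proved is an estimate on the second-order Taylor remainder of $f$ at the point $a \geq 0$.

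First I would handle the easy regime $|b| \geq a/2$. Here $a \leq 2|b|$, hence
$a^{\alpha} \leq 2^{\alpha}|b|^{\alpha}$, $a^{\alpha-1}|b| \leq 2^{\alpha-1}|b|^{\alpha}$, $a^{\alpha-2} b^{2} \leq 2^{\alpha-2}|b|^{\alpha}$, and $|a+b| \leq 3|b|$, so every term on the left-hand side is individually bounded by $C(\alpha)|b|^{\alpha}$. This yields the desired estimate with the first term $|b|^{\alpha}$ on the right-hand side and no need for the second.

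Next, in the regime $|b| \leq a/2$ (so in particular $a > 0$ and $a+b \geq a/2 > 0$), I would use Taylor expansion of $f$ at $a$ with integral remainder. Writing
\begin{equation*}
f(a+b) - f(a) - f'(a)b - \tfrac{1}{2} f''(a) b^{2} = \int_{0}^{b} (b-t)\bigl[f''(a+t) - f''(a)\bigr]\, \ud t,
\end{equation*}
I would split on $\alpha$. For $\alpha \geq 3$, $f''$ is continuously differentiable away from $0$ with $|f'''(s)| \leq C s^{\alpha-3}$ for $s > 0$; since $a+t \in [a/2, 3a/2]$, the mean value theorem gives $|f''(a+t) - f''(a)| \leq C a^{\alpha-3} |t|$, and integrating produces a remainder bounded by $C a^{\alpha-3}|b|^{3} = C a^{\gamma}|b|^{\alpha-\gamma}$ with $\gamma = \alpha-3$. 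For $2 \leq \alpha < 3$, the function $s \mapsto |s|^{\alpha-2}\mathrm{sgn}(s)$ is H\"older continuous of order $\alpha - 2$, so $|f''(a+t) - f''(a)| \leq C|t|^{\alpha-2}$ uniformly in $a \geq 0$; integrating gives a remainder bounded by $C \int_{0}^{|b|} (|b|-t) t^{\alpha-2}\, \ud t = C|b|^{\alpha}$, matching $\gamma = 0$.

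The only subtlety is the H\"older bound for $f''$ in the range $\alpha \in [2,3)$, but this follows from the elementary inequality $\bigl||x|^{\beta}\mathrm{sgn}(x) - |y|^{\beta}\mathrm{sgn}(y)\bigr| \leq C_{\beta} |x-y|^{\beta}$ valid for $0 < \beta \leq 1$ and arbitrary $x, y \in \mathbb{R}$; I would recall this (or prove it in a line by reducing to $y=0$) and otherwise the argument is routine. Combining the two regimes gives the stated bound with the explicit constant depending only on $\alpha$.
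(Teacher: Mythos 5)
Your proof is correct and complete. The paper itself states Lemma \ref{cplema1} in the appendix without any proof (it is a standard Taylor-remainder estimate that appears in Y.Y. Li's and Bahri's work), so there is no argument in the paper to compare against. Your two-regime decomposition ($|b|\geq a/2$ handled by brute force term by term, $|b|\leq a/2$ handled by the integral form of the Taylor remainder, with the further split into the $C^{3}$ case $\alpha\geq 3$ and the H\"older case $2\leq\alpha<3$) is exactly the standard way to establish this, the exponents match $\gamma=\max\{0,\alpha-3\}$ in both branches, and the auxiliary H\"older inequality $\bigl||x|^{\beta}\mathrm{sgn}(x)-|y|^{\beta}\mathrm{sgn}(y)\bigr|\leq C_{\beta}|x-y|^{\beta}$ you invoke is elementary and true (with $C_{\beta}=2$). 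The only point worth making explicit is the degenerate endpoint $\alpha=2$ in the second regime, where $\beta=0$ falls outside your stated range $0<\beta\leq 1$; but there $f''$ is constant on $[a/2,3a/2]$ and the remainder vanishes identically, so nothing is lost.
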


\begin{lemma}\label{lem1}
Let $1<\bt<2,$ there exists a universal positive constant $C$ such that, for any $a>0,$ $b\in \mathbb{R},$
\begin{align*}
\left||a+b|^{\bt-1}(a+b)-a^{\bt}-\bt a^{\bt-1}b\right|\leq C|b|^{\bt}.
\end{align*}
\end{lemma}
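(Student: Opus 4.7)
\medskip

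\noindent\textbf{Proof proposal.} The plan is to compare $f(x) := |x|^{\beta-1}x$ at $x = a+b$ against its first-order Taylor polynomial at $x = a$, noting that $f'(a) = \beta a^{\beta-1}$ for $a > 0$. I would split the argument into two regimes according to whether $|b|$ is comparable to $a$ or much smaller than $a$, with threshold $|b| = a/2$.

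In the regime $|b| \geq a/2$, I would simply apply the triangle inequality. The bound $|f(a+b)| \leq (a+|b|)^{\beta} \leq (3|b|)^{\beta}$, together with $a^{\beta} \leq 2^{\beta}|b|^{\beta}$ and $\beta a^{\beta-1}|b| \leq \beta\, 2^{\beta-1}|b|^{\beta}$, immediately yields a bound of the form $C|b|^{\beta}$ with $C$ depending only on $\beta$.

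In the complementary regime $|b| < a/2$ we have $a + tb \geq a/2 > 0$ for every $t \in [0,1]$, so $f$ is smooth on the segment from $a$ to $a+b$. Taylor's theorem with remainder then produces some $\theta \in (0,1)$ satisfying
\[
(a+b)^{\beta} - a^{\beta} - \beta a^{\beta-1}b = \tfrac{1}{2}\beta(\beta-1)(a+\theta b)^{\beta-2}b^{2}.
\]
Since $\beta - 2 < 0$ and $a + \theta b \geq a/2$, I would bound $(a+\theta b)^{\beta-2} \leq (a/2)^{\beta-2}$. Writing $b^{2} = |b|^{2-\beta}|b|^{\beta}$ and using $|b| < a/2$ to estimate $|b|^{2-\beta} \leq (a/2)^{2-\beta}$, the two powers of $a$ telescope to $1$, and the remainder is bounded by $\tfrac{1}{2}|\beta(\beta-1)||b|^{\beta}$. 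Combining the two regimes gives the stated inequality.

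The essential obstacle to avoid is that $f(x) = |x|^{\beta-1}x$ is merely $C^{1}$, not $C^{2}$, at the origin when $1 < \beta < 2$, so a naive global second-order Taylor expansion fails whenever $a + \theta b$ can approach zero. Isolating the ``large $|b|$'' regime sidesteps this: wherever the second-derivative estimate threatens to blow up, the raw triangle-inequality bound already suffices, and the chosen threshold $|b| \asymp a$ is precisely what lets the remainder $a^{\beta-2}b^{2}$ be absorbed into $|b|^{\beta}$.
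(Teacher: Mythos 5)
Your proof is correct. The paper states Lemma \ref{lem1} in the Appendix without proof, and your two-regime argument (triangle inequality when $|b|\ge a/2$; second-order Taylor expansion of $x\mapsto x^{\beta}$ on the segment $[a, a+b]$, which stays in $[a/2,\infty)$, when $|b|<a/2$) is the standard and natural way to establish it; the key observation that $a^{\beta-2}b^{2}\le C|b|^{\beta}$ precisely when $|b|\lesssim a$ is exactly the point of the threshold. One minor remark: your constants $3^{\beta}+2^{\beta}+\beta 2^{\beta-1}$ and $\tfrac12\beta(\beta-1)$ depend on $\beta$, but since $\beta$ ranges over the bounded interval $(1,2)$ they are uniformly bounded, so the constant can indeed be taken universal as the lemma asserts.
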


\begin{lemma}\label{lem2}
Let $\bt>1$ and $k\in \mathbb{N}_{+},$ there exists a constant $C$,
such that for any $(a_{1},\cdots, a_{k})
\in \mathbb{R}^{k},$
$$
\Big|\big(\sum_{i=1}^{k} a_i\big)^{\bt}-\sum_{i=1}^{k} a_{i}^{\bt}
\Big| \leq C\sum_{i \neq j}|a_{i}|^{\bt-1} |a_{j}|.
$$
\end{lemma}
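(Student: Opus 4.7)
The plan is to prove Lemma \ref{lem2} by a standard symmetrization/ordering argument combined with the mean value theorem. The lemma is applied in the paper with $a_i = \delta_{P_i,t_i}\ge 0$, so it suffices to treat the case $a_i\ge 0$ (the general case reduces to this by replacing each $a_i$ with $|a_i|$ on the right-hand side; the left-hand side is handled term-by-term, noting that for the applications in the paper all $a_i$ are nonnegative).

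First I would reorder the indices so that $a_1\ge a_2\ge\cdots\ge a_k\ge 0$, and set $S=\sum_{i=1}^k a_i$. Since $S\le k a_1$, we have $S^{\beta-1}\le k^{\beta-1}a_1^{\beta-1}$. By the mean value theorem applied to $f(s)=s^\beta$ on $[a_1,S]$,
\begin{align*}
S^\beta-a_1^\beta=\beta\int_{a_1}^{S}s^{\beta-1}\,\mathrm{d}s\le \beta S^{\beta-1}(S-a_1)\le C_\beta\, a_1^{\beta-1}\sum_{i=2}^{k}a_i,
\end{align*}
with $C_\beta=\beta k^{\beta-1}$. This already contributes a term of the required form, since each summand $a_1^{\beta-1}a_i$ (with $i\ne 1$) appears in $\sum_{i\ne j}|a_i|^{\beta-1}|a_j|$.

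Next I would estimate the remaining contribution $\sum_{i=2}^{k}a_i^\beta$. Using $a_i\le a_1$ for $i\ge 2$, we bound $a_i^\beta=a_i^{\beta-1}a_i\le a_1^{\beta-1}a_i$, so that
\begin{align*}
\sum_{i=2}^{k}a_i^\beta\le \sum_{i=2}^{k}a_1^{\beta-1}a_i\le\sum_{i\ne j}|a_i|^{\beta-1}|a_j|.
\end{align*}
Combining the two estimates with the triangle inequality gives
\begin{align*}
\Big|S^\beta-\sum_{i=1}^{k}a_i^\beta\Big|\le (S^\beta-a_1^\beta)+\sum_{i=2}^{k}a_i^\beta\le C\sum_{i\ne j}|a_i|^{\beta-1}|a_j|,
\end{align*}
which is the claim with $C=C_\beta+1$ (independent of the point $(a_1,\dots,a_k)$ after the initial reordering).

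There is really no substantive obstacle: the only mild subtlety is ensuring the constant depends only on $\beta$ and $k$, which is automatic from the crude bound $S\le k a_1$. An alternative route would be induction on $k$: the base case $k=2$ follows from the inequality $(a+b)^\beta-a^\beta-b^\beta\le C_\beta\bigl(a^{\beta-1}b+ab^{\beta-1}\bigr)$ for $a,b\ge 0$ (again by the mean value theorem), and the inductive step groups $\sum_{i=1}^{k-1}a_i$ as a single term and applies Lemma \ref{lem1} together with the inductive hypothesis. Either approach works, but the ordering argument is shorter and avoids any case analysis on the relative sizes of the terms.
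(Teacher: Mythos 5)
Your argument is correct. The paper states this lemma in the Appendix without proof, so there is nothing to compare against; your ordering-plus-mean-value-theorem argument is the standard one and all steps check out: with $a_1\ge\cdots\ge a_k\ge 0$ and $S=\sum_i a_i$, both $S^\beta-a_1^\beta\le \beta k^{\beta-1}a_1^{\beta-1}\sum_{i\ge 2}a_i$ and $\sum_{i\ge 2}a_i^\beta\le\sum_{i\ge 2}a_1^{\beta-1}a_i$ are valid because $\beta-1>0$, and the two sided bound on the difference follows since both quantities are nonnegative. The only caveat is your parenthetical reduction of general signs "by replacing each $a_i$ with $|a_i|$," which does not literally work since $(\sum a_i)^\beta\ne(\sum|a_i|)^\beta$; but this is moot, as for non-integer $\beta$ the statement only makes sense for $a_i\ge 0$ (or with signed powers), and every application in the paper is to the nonnegative functions $\alpha_i\delta_{P_i,t_i}$.
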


\begin{lemma}\label{lem3}
 Let $\varepsilon_{0}, \tau>0$ be suitably small and $A>0$ be suitably large.
 Let $A^{-1} \tau^{-1 / 2}<t_{1}, t_{2}<A \tau^{-1 / 2},$
 $P_{1},P_{2}\in \mathbb{S}^3,$ $|P_{1}-P_{2}|\geq \varepsilon_{0},$
 $\delta_{P_{i},t_{i}}$ be as in
 \eqref{delta} and $G_{P_1}(P_2)$ be as in \eqref{gf}
 $(|P_{1}-P_{2}|$ represents the distance between  two points $P_{1}$
 and $P_{2}$  after through a stereographic  projection$).$
Then, we have,
\begin{align}\label{a.2}
&\int_{\mathbb{S}^3}\delta_{P_1, t_1}^2\delta_{P_2,t_2}
=4\pi|\mathbb{S}^2|\frac{G_{P_1}(P_2)}{t_1t_2}
+O(\tau^{3/2}),
\end{align}
\begin{align}\label{a.3}
\int_{\mathbb{S}^3}\delta_{P_1,t_1}^{2-\tau}\delta_{P_2,t_2}=O(\tau),
\end{align}
\begin{align}\label{a.1}
\frac{\partial}{\partial t_{1}}
\int_{\mathbb{S}^3}\delta_{P_1,t_{1}}^2\delta_{P_2, t_{2}}=
-4\pi|\mathbb{S}^2|\frac{G_{P_1}(P_2)}{t_1^2t_2}+O(\tau^2),
\end{align}
\begin{align}\label{a.4}
\frac{\partial}{\partial t_{1}}
\int_{\mathbb{S}^3}\delta_{P_2, t_{2}}\delta_{P_1,t_{1}}^{2-\tau}
=\int_{\mathbb{S}^3}\delta_{P_2,t_2}\delta_{P_1,t_1}^{2}+O(\tau^{5/2}|\log \tau|),
\end{align}
\begin{align}\label{partp12}
\frac{\partial}{\partial t_1}
\int_{\mathbb{S}^3}
\delta_{P_2, t_2}^{2-\tau}\delta_{P_1, t_1}=
\frac{\partial}{\partial t_1}\int_{\mathbb{S}^3}
\delta_{P_2, t_2}^{2}\delta_{P_1, t_1}+O(\tau^{5/2}|\log\tau|),
\end{align}
\begin{align}
\int_{\mathbb{S}^3}|P-P_1|^2\delta_{P_1,t_1}^{3-\tau}
=\frac{1}{t_{1}^2}\frac{3\pi}{2}|\mathbb{S}^2|
+O(\tau^{2}| \log \tau|),
\end{align}
\begin{align}\label{part3-tau}
\frac{\partial}{\partial t_1}\int_{\mathbb{S}^3}
\delta^{3-\tau}_{P_1,t_1}=
-\frac{\tau}{t_1}\frac{\pi}{2}|\mathbb{S}^2|
+O(\tau^{5/2}|\log \tau|),
\end{align}
\begin{align}\label{a.5}
\frac{\partial}{\partial t_1}
\int_{\mathbb{S}^3}
|P-P_1|^2\delta_{P_1,t_1}^{3-\tau}=
- \frac{3\pi}{t_{1}^3}|\mathbb{S}^2|+O(\tau^{5/2}|\log \tau|).
\end{align}
\end{lemma}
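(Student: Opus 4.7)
The plan is to reduce everything to explicit computations in $\mathbb{R}^{3}$ via stereographic projection. Concretely, given the pair $P_{1},P_{2}$, I would project from a pole well away from both, identifying each $\delta_{P_{i},t_{i}}$ on $\mathbb{S}^{3}$ with the standard Euclidean bubble $\omega_{y_{i},t_{i}}(y)=2t_{i}/(1+t_{i}^{2}|y-y_{i}|^{2})$, up to the conformal factor $|J_{F}|^{1/3}=(2/(1+|y|^{2}))$ already used in the main text. Under this identification $(-\Delta)^{1/2}\omega_{0,t}=\omega_{0,t}^{2}$ on $\mathbb{R}^{3}$, and scale-invariant Euclidean integrals such as $\int_{\mathbb{R}^{3}}(1+|z|^{2})^{-k}$ are computable through the Beta function, so every leading constant in the statement can be read off directly.

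For the interaction estimates \eqref{a.2}, \eqref{a.3}, \eqref{a.1}, I would split $\mathbb{S}^{3}$ into a small fixed ball $B_{\rho}(P_{1})$, a small fixed ball $B_{\rho}(P_{2})$ (with $\rho\ll\varepsilon_{0}$), and the complement. On $B_{\rho}(P_{1})$ the bubble $\delta_{P_{2},t_{2}}$ is smooth with $\delta_{P_{2},t_{2}}(P_{1})=2G_{P_{1}}(P_{2})/t_{2}+O(t_{2}^{-3})$, so Taylor expansion and the rescaling $y=z/t_{1}$ give
\[
\int_{B_{\rho}(P_{1})}\delta_{P_{1},t_{1}}^{2}\delta_{P_{2},t_{2}}=\frac{2G_{P_{1}}(P_{2})}{t_{2}}\cdot\frac{1}{t_{1}}\int_{\mathbb{R}^{3}}\frac{4\,\mathrm{d}z}{(1+|z|^{2})^{2}}+O(\tau^{3/2}),
\]
which matches $4\pi|\mathbb{S}^{2}|G_{P_{1}}(P_{2})/(t_{1}t_{2})$. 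The symmetric piece from $B_{\rho}(P_{2})$ is $O(t_{1}^{-2}t_{2}^{-1})=O(\tau^{3/2})$ by the pointwise bound $\delta_{P_{1},t_{1}}^{2}\le C/t_{1}^{2}$ off a neighborhood of $P_{1}$, and the far-field contribution is likewise controlled. For \eqref{a.3} I write $\delta_{P_{1},t_{1}}^{2-\tau}=\delta_{P_{1},t_{1}}^{2}\cdot\delta_{P_{1},t_{1}}^{-\tau}$, use $\|\delta_{P_{1},t_{1}}^{-\tau}\|_{L^{\infty}}\le Ct_{1}^{\tau}$ and the mass bound $\int\delta_{P_{1},t_{1}}^{2}=O(t_{1}^{-1})=O(\tau^{1/2})$, and combine with the $L^{\infty}$-bound on $\delta_{P_{2},t_{2}}$ away from $P_{2}$. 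Derivatives in $t_{1}$ for \eqref{a.1}, \eqref{a.4}, \eqref{partp12} are obtained by differentiating under the integral sign and applying the same splitting.

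For the self-interaction pieces (the $|P-P_{1}|^{2}\delta^{3-\tau}$ integral, \eqref{part3-tau}, \eqref{a.5}), I would project from $-P_{1}$, rescale $y=z/t_{1}$, and expand the $\tau$-power correction as $(|J_{F}|^{1/3}\omega_{0,t_{1}}^{-1})^{\tau}=1+\tau\log(|J_{F}|^{1/3}\omega_{0,t_{1}}^{-1})+O(\tau^{2}\log^{2})$. Because $t_{1}\sim\tau^{-1/2}$, the first-order term contributes $-\tau\log t_{1}\cdot\int_{\mathbb{R}^{3}}(1+|z|^{2})^{-3}$-type corrections of size $\tau|\log\tau|\cdot\tau=\tau^{2}|\log\tau|$, while the $\tau$-independent Euclidean integrals give the stated leading terms $(3\pi/2)|\mathbb{S}^{2}|/t_{1}^{2}$ and $3\pi|\mathbb{S}^{2}|/t_{1}^{3}$ after differentiation in $t_{1}$. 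The $\log$-factor in the error bound \eqref{part3-tau} comes from the $\log t_{1}$ that survives differentiating the expansion.

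The main source of technical bookkeeping will be the $O(\tau^{5/2}|\log\tau|)$ remainders in \eqref{a.4}, \eqref{partp12}, \eqref{part3-tau}, \eqref{a.5}: differentiating $\delta_{P_{1},t_{1}}^{-\tau}=\exp(-\tau\log\delta_{P_{1},t_{1}})$ in $t_{1}$ produces a factor $\tau/t_{1}\sim\tau^{3/2}$, which must then be multiplied against another factor of order $1/t_{1}\sim\tau^{1/2}$ from the derivative of the scaled Euclidean integral, with the residual $|\log\tau|$ coming from $\log t_{1}$. All other assertions reduce to this same pattern: stereographic projection, split the domain near the singular centers, rescale by $t_{i}$, and Taylor-expand the $\tau$-power in the exponential form. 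No step requires anything beyond the explicit Green's function formula \eqref{gf}, the closed-form Euclidean moments of $(1+|z|^{2})^{-k}$, and elementary pointwise bounds on $\delta_{P,t}$.
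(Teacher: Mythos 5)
The paper never actually proves Lemma \ref{lem3}: it is stated in the appendix as a collection of ``elementary calculations'' with no argument supplied, so there is no official proof to compare against. Your strategy --- stereographic projection to $\mathbb{R}^{3}$, splitting the domain into fixed balls around $P_{1}$, $P_{2}$ and the complement, rescaling $y=z/t_{i}$, and expanding the $\tau$-powers as $e^{-\tau\log(\cdot)}$ --- is the standard route and does prove all of these estimates. Three points deserve attention, though. First, your displayed computation for \eqref{a.2} drops the conformal weight: since $\int_{\mathbb{S}^{3}}\delta_{P_{1},t_{1}}^{2}=\int_{\mathbb{R}^{3}}\tfrac{2}{1+|y|^{2}}\,\omega_{0,t_{1}}^{2}\,\mathrm{d}y$, the rescaled integrand is $\tfrac{8}{(1+|z|^{2})^{2}}$, not $\tfrac{4}{(1+|z|^{2})^{2}}$; as written your leading term is $2\pi|\mathbb{S}^{2}|G_{P_{1}}(P_{2})/(t_{1}t_{2})$, half the stated value, and in a lemma whose entire content is exact constants (they feed into the matrix $M$ and the degree count) this factor must be restored. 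Second, the error exponent $O(\tau^{5/2}|\log\tau|)$ in \eqref{part3-tau} and \eqref{a.5} is \emph{not} obtained by bounding $\partial_{t_{1}}\delta_{P_{1},t_{1}}^{-\tau}$ term by term --- a naive bound gives only $O(\tau^{3/2})$, the same size as the main term. One must use that $\int_{\mathbb{R}^{3}}\omega_{0,t}^{3}\,\mathrm{d}y$ is $t$-independent, so that after rescaling the $t_{1}$-derivative falls entirely on the slowly varying factors $(|J_{F}|^{1/3}\omega_{0,t_{1}}^{-1})^{\tau}$ and $\tfrac{2}{1+|z|^{2}/t_{1}^{2}}$, each of whose $t_{1}$-gradients carries an extra power of $t_{1}^{-1}$ or $\tau$; your sketch gestures at this but it is the one step where the bookkeeping genuinely matters. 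Third, \eqref{a.4} as printed equates a $t_{1}$-derivative (of size $O(\tau^{3/2})$) with an underived integral (of size $O(\tau)$), so the right-hand side is evidently missing a $\tfrac{\partial}{\partial t_{1}}$; your uniform ``differentiate under the integral and split'' treatment proves the corrected version, which is also the one actually used in the proof of Proposition \ref{lem6}.
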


\begin{lemma}
Under the hypotheses of Lemma \ref{lem3}, in addition that $\Gamma_{1},\Gamma_{2}$ are
 positive constants independent of $\tau.$
Then, we have,
\begin{align}\label{p1p1}
\langle \delta_{P_{1}, t_{1}}, \delta_{P_{1}, t_{1}}\rangle=
|\mathbb{S}^3|,\quad
\Big\langle \frac{\partial\delta_{P_1,t_1}}
{\partial P_1^{(\ell)}},
\frac{\partial{\delta_{P_1,t_1}}}
{\partial P_1^{(\ell)}}
\Big\rangle=\Gamma_{1}t_1^2,
\end{align}
\begin{align}\label{part11}
\Big\langle
\frac{\partial\delta_{P_1, t_1}}{\partial t_{1}},
\frac{\partial\delta_{P_1, t_1}}{\partial t_{1}}
\Big\rangle=\Gamma_2 t_1^{-2},
\end{align}
\begin{align}\label{a.10}
\langle \delta_{P_{1}, t_{1}}, \delta_{P_{2}, t_{2}}\rangle
=O(\tau),
\end{align}
\begin{align}\label{1-tau}
\|\delta_{P_1,t_1}^{1-\tau}\delta_{P_2, t_2}\|_{L^{3/2}(\mathbb{S}^3)}=O(\tau|\log \tau|),
\end{align}
\begin{align}\label{a.11}
\Big\|
\delta_{P_2,t_2}^{1-\tau}
\frac{\partial \delta_{P_1,t_1}}{\partial t_1}\Big\|_{L^{3/2}(\mathbb{S}^3)}
= O(\tau^{3/2}|\log \tau|),
\end{align}
\begin{align}\label{12par1}
\Big\|\delta_{P_1,t_1}^{1-\tau}\delta_{P_{2}, t_{2}}\frac{\partial \delta_{P_1, t_1}}{\partial t_1}\Big\|_{L^1(\mathbb{S}^3)}=O(\tau^{3/2}|\log \tau|),
\end{align}
\begin{align}
\begin{aligned}\label{a.9}
&\|\delta_{P_1,t_1}^{2-\tau}-\delta_{P_1, t_1}^2 \|_{L^{3/2}(\mathbb{S}^3)}=
O(\tau |\log \tau|),\\\\
&\|\delta_{P_1,t_1}^{1-\tau}-\delta_{P_1, t_1} \|_{L^{3}(\mathbb{S}^3)}
=O(\tau |\log \tau|),
\end{aligned}
\end{align}
\begin{align}\label{a.7}
\|\delta_{P_1,t_1}^{3-\tau}-\delta_{P_1,t_1}^3\|_{L^{1}(\mathbb{S}^3)}
=O(\tau|\log \tau|),
\end{align}
\begin{align}
\begin{aligned}\label{p-p1}
&\big\||P-P_1|\delta_{P_1, t_1}^{2}\big\|_{L^{3/2}(\mathbb{S}^3)}=O( \tau^{1/2}),\\
&
\big\||P-P_1|^2 \delta^2_{P_{1}, t_1}\big\|_{L^{3/2}(\mathbb{S}^3)}
=O(\tau),
\end{aligned}
\end{align}
\begin{align}\label{a.8}
\int_{\mathbb{S}^3}|P-P_1|^3\delta_{P_1,t_1}
\frac{\partial \delta_{P_1,t_1}}{\partial t_1}=O(\tau^2).
\end{align}
\end{lemma}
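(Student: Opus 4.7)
The plan is to reduce every estimate to computations on $\mathbb{R}^3$ via stereographic projection and then exploit the standard bubble $\omega_{0,t}(y)=2t/(1+t^2|y|^2)$, which is the ground state solution of $(-\Delta)^{1/2}\omega=\omega^2$ on $\mathbb{R}^3$. After a conformal change of variables centered at $P_1$, the sphere bubble $\delta_{P_1,t_1}$ pulls back (up to the conformal factor $|J_F|^{1/3}$) to $\omega_{0,t_1}$, and $\delta_{P_2,t_2}$ pulls back to a function that is comparable to a bounded constant times $\omega_{0,t_2}$ composed with the inversion sending $P_2$ to infinity. Throughout one uses the regime $t_1,t_2\sim\tau^{-1/2}$ to convert inverse powers of $t$ into powers of $\tau^{1/2}$, and the separation $|P_1-P_2|\ge\varepsilon_0$ to keep denominators bounded away from zero whenever integrands pair bubbles from distinct concentration points.

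For the diagonal inner products \eqref{p1p1}, \eqref{part11}, I would use the defining equation $P_\sigma\delta_{P,t}=\delta_{P,t}^2$. Pairing with $\delta_{P,t}$ itself gives $\langle\delta,\delta\rangle=\int_{\mathbb{S}^3}\delta^3$, which is conformally invariant under the family $\delta_{P,t}$ and equals $|\mathbb{S}^3|$ by direct computation at $t=1$. Differentiating the equation in $t$ or in a tangential direction $P^{(\ell)}$ and pairing with $\partial_t\delta$ (respectively $\partial_{P^{(\ell)}}\delta$) reduces the $H^\sigma$-norms to integrals of $\delta^{1}\cdot(\partial\delta)^{2}$ type; pulling back to $\mathbb{R}^3$ and scaling $y\mapsto y/t$ yields $t^{-2}$ and $t^{2}$ respectively, with explicit positive universal constants $\Gamma_1,\Gamma_2$. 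The cross estimate \eqref{a.10} is handled by the same pairing trick: $\langle\delta_{P_1,t_1},\delta_{P_2,t_2}\rangle=\int\delta_{P_1,t_1}^2\delta_{P_2,t_2}$, and by \eqref{a.2} this is $O(1/(t_1t_2))=O(\tau)$.

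For the interaction $L^p$-bounds \eqref{1-tau}, \eqref{a.11}, \eqref{12par1}, the strategy is to split the sphere into the two caps $\mathcal{C}_i=\{d(P,P_i)<\varepsilon_0/2\}$ and their complement. On $\mathcal{C}_2$ the factor $\delta_{P_1,t_1}$ is essentially $t_1^{-1}$ since $d(P,P_1)\gtrsim\varepsilon_0$; on $\mathcal{C}_1$ the factor $\delta_{P_2,t_2}$ is $O(t_2^{-1})$ by the same reason; on the intermediate region both factors decay like $1/t_i$. One then pulls back to $\mathbb{R}^3$ on each cap and scales to concentrate the remaining bubble. The extra $|\log\tau|$ factors come from the logarithmic divergence one picks up when integrating $\omega_{0,t}^{2-\tau}$ or $|\nabla\omega_{0,t}|^{3/2}$: for instance, $\int_{\mathbb{R}^3}\omega_{0,t}^{3/2(1-\tau)}$ scales as $t^{-3\tau/2}\int(1+|z|^2)^{-3(1-\tau)/2}\,dz$, and the borderline decay of the integrand generates $|\log\tau|$ when $\tau\to 0$. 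Estimate \eqref{a.5} and the derivative computations \eqref{a.1}, \eqref{a.4}, \eqref{partp12}, \eqref{part3-tau} follow by differentiating under the integral sign, using the explicit formula $\partial_{t}\delta_{P,t}=\delta_{P,t}/t-(1-\cos d)\delta_{P,t}^3/(2)$ together with $P_\sigma^{-1}$ action and the Green's function representation $G_{P_1}(P_2)=(1-\cos d(P_1,P_2))^{-1}$.

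The estimates \eqref{a.9}, \eqref{a.7} controlling the $\tau$-perturbed powers rest on the pointwise inequality $|\delta^{-\tau}-1|\le C\tau|\log\delta|$, valid on the range where $\delta$ is neither extremely large nor extremely small. One writes
\begin{equation*}
\|\delta^{p-\tau}-\delta^p\|_{L^q}\le C\tau\,\|\delta^p\,|\log\delta|\,\|_{L^q},
\end{equation*}
and since $\log\delta_{P_1,t_1}$ is bounded by $C|\log t_1|+C|\log(1+t_1^2|y|^2)|\le C|\log\tau|$ on the effective support of $\delta^p$, the $|\log\tau|$ factor pulls out. The weighted estimates \eqref{p-p1}, \eqref{a.8} are elementary: $|P-P_1|\sim|y|$ in stereographic coordinates, and integrating $|y|^k\omega_{0,t_1}^p$ scales as $t_1^{(\text{power})}$; plugging $t_1\sim\tau^{-1/2}$ gives exactly the stated orders.

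The main obstacle will be bookkeeping: one must consistently track which integrals truly produce $|\log\tau|$ factors from borderline integrability and which do not, and one must keep error terms of size $O(\tau^{5/2}|\log\tau|)$ distinct from leading terms $O(\tau^{3/2})$ in the derivative formulas. No conceptual difficulty is hidden, but the proof is a careful case-by-case verification, each case reducing to a scaling argument on $\mathbb{R}^3$ supplemented by the bubble-interaction expansion on $\mathbb{S}^3$.
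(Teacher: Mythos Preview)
The paper does not actually prove this lemma: it is stated in the Appendix as a list of ``elementary calculations'' without any argument, so there is no proof in the paper to compare against. Your sketch is the standard and correct way to verify these estimates---stereographic projection to $\mathbb{R}^3$, reduction to the scaled bubble $\omega_{0,t}$, the identity $\langle\delta,\delta\rangle=\int\delta^3$ from $P_\sigma\delta=\delta^2$, cap decomposition for interaction terms, and the pointwise bound $|\delta^{-\tau}-1|\le C\tau|\log\delta|$ for the $\tau$-perturbed powers---and it would fill the gap the authors leave. One small slip: the explicit formula you write for $\partial_t\delta_{P,t}$ is not quite right (differentiate $\delta=t/(1+\tfrac{t^2-1}{2}(1-\cos d))$ directly), and several of the labelled equations you discuss in your third paragraph (\eqref{a.5}, \eqref{a.1}, \eqref{a.4}, \eqref{partp12}, \eqref{part3-tau}) belong to the \emph{previous} lemma (Lemma~\ref{lem3}), not this one; but neither point affects the soundness of your plan.
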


\begin{lemma}
Let $\varepsilon_0, \tau, A$ be as in Lemma \ref{lem3}, $P_{1},P_2,P_3 \in \mathbb{S}^3$
satisfy $|P_i-P_j|\geq \varepsilon_0,$  $i\neq j,$
 and $A^{-1}\tau^{-1/2}<t_1,t_2,t_3\leq A\tau^{-1/2}.$ Then,  we have,
\begin{align}\label{a.12}
\Big|\frac{\partial }{\partial P_{1}}\int_{\mathbb{S}^3}
\delta_{P_{1},t_{1}}^{3-\tau}\Big|=O(\tau^{1/2}),\quad
\Big\| \delta_{P_2,t_2}
\frac{\partial \delta_{P_1,t_1}}{\partial P_1} \Big\|_{L^{3/2}(\mathbb{S}^3)}=O(
\tau^{1/2}|\log\tau|),
\end{align}
\begin{align}\label{a.7}
\Big|\frac{\partial }{\partial P_1}
\int_{\mathbb{S}^3}\delta_{P_2, t_2}^{2-\tau}\delta_{P_1, t_1}\Big|=O(\tau^{1/2}),\quad
\Big|\frac{\partial }{\partial P_1}
\int_{\mathbb{S}^3}\delta_{P_2, t_2}^2\delta_{P_1, t_1}\Big|=O(\tau^{1/2}),
\end{align}
\begin{align}\label{a.6}
&\Big\|
\delta_{P_2,t_2}^{1-\tau}\delta_{P_{3}, t_{3}}
\frac{\partial \delta_{P_1,t_1}}{\partial P_1}
\Big\|_{L^1(\mathbb{S}^3)}
=O(\tau^{1/2}|\log\tau|).
\end{align}
\end{lemma}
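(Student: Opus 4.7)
The plan is to establish each bound by combining three ingredients: (a) the rotational invariance of $\mathbb{S}^3$, which lets certain integrals depend only on $t_i$; (b) the explicit form of $\delta_{P,t}$ in stereographic coordinates, which gives sharp pointwise control on $\partial_P\delta_{P,t}$ in the regime $t_i\sim\tau^{-1/2}$; and (c) the separation $|P_i-P_j|\geq\varepsilon_0$, which makes $\delta_{P_j,t_j}(x)$ pointwise $O(\tau^{1/2})$ whenever $x$ stays away from $P_j$. The structure of the argument parallels that of Lemma \ref{lem3} and the preceding display, only with the extra complication of one $P$-derivative.

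First, for the first bound in \eqref{a.12}: since $\delta_{P_1,t_1}(x)$ depends on $x$ only through $d(x,P_1)$, the integral $\int_{\mathbb{S}^3}\delta_{P_1,t_1}^{3-\tau}$ is invariant under every rotation that moves $P_1$, so its derivative in $P_1$ vanishes identically, which is stronger than $O(\tau^{1/2})$. For the companion $L^{3/2}$ bound, I would work in the stereographic chart with $P_1$ at the south pole. There $\delta_{P_1,t_1}$ is, up to a smooth conformal factor, the Euclidean bubble $\omega_{0,t_1}(y)=2t_1/(1+t_1^2|y|^2)$; moving $P_1$ by an infinitesimal rotation corresponds to translating $y$, so $|\partial_{P_1}\delta_{P_1,t_1}|$ is pointwise comparable to $|\nabla_y\omega_{0,t_1}|=4t_1^3|y|/(1+t_1^2|y|^2)^2$. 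A direct scaling substitution $z=t_1 y$ shows that $\|\nabla\omega_{0,t_1}\|_{L^{3/2}(\mathbb{R}^3)}=O(1)$ uniformly in $t_1$. Combined with the pointwise bound $\delta_{P_2,t_2}(x)=O(\tau^{1/2})$ on $\{d(x,P_2)\geq\varepsilon_0/2\}$ and the tail bound $\omega_{0,t_1}(y)\leq C/(t_1|y|^2)$ for $|y|\geq 1/t_1$, splitting into the near- and far-from-$P_1$ regions yields the claimed $O(\tau^{1/2}|\log\tau|)$.

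For the two bounds in \eqref{a.7}, I would differentiate under the integral sign to get $\partial_{P_1}\!\int\delta_{P_2,t_2}^{2-\tau}\delta_{P_1,t_1}=\int\delta_{P_2,t_2}^{2-\tau}\,\partial_{P_1}\delta_{P_1,t_1}$, and then split the sphere into $B_{\varepsilon_0/3}(P_1)$ and its complement. On $B_{\varepsilon_0/3}(P_1)$, $\delta_{P_2,t_2}^{2-\tau}\leq C\tau$ since $d(x,P_2)\geq 2\varepsilon_0/3$, while the same scaling computation gives $\|\partial_{P_1}\delta_{P_1,t_1}\|_{L^1(B_{\varepsilon_0/3}(P_1))}=O(\tau^{1/2}|\log\tau|)$ (the log arising from the borderline $\int_{1/t_1}^{\varepsilon_0} r^{-1}\,dr$ after substitution $z=t_1 y$). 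Away from $P_1$, $|\partial_{P_1}\delta_{P_1,t_1}|\leq C/t_1=O(\tau^{1/2})$ pointwise, and $\|\delta_{P_2,t_2}^{2-\tau}\|_{L^1}=O(\tau^{1/2})$ by the companion of \eqref{a.3}. Both contributions are at most $O(\tau^{1/2})$, which settles the claim. The $\tau=0$ version is obtained identically, using that $\delta_{P_2,t_2}^{2-\tau}$ and $\delta_{P_2,t_2}^{2}$ differ by a controlled $O(\tau|\log\tau|)$ perturbation via \eqref{a.9}. Finally, estimate \eqref{a.6} follows by a three-region decomposition around $P_1,P_2,P_3$ together with H\"older's inequality, applying in each region the pointwise bound $O(\tau^{1/2})$ for every bubble factor evaluated away from its own concentration centre, plus the $L^1$-bound for $\partial_{P_1}\delta_{P_1,t_1}$ derived above.

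The main technical obstacle will be the careful bookkeeping of the borderline $|\log\tau|$ factors, which originate from the logarithmically divergent $L^1$-mass of $\nabla\omega_{0,t_1}$ on a bounded Euclidean ball in dimension three; once the substitution $z=t_1 y$ is carried out, the remaining integrals are $\tau$-independent and uniformly bounded, so that the entire lemma reduces to the same elementary scaling analysis that underlies the preceding estimates \eqref{a.2}--\eqref{a.8}.
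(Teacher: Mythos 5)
Your proposal is correct. Note that the paper itself states this lemma in the Appendix without proof (it is listed among the ``elementary calculations''), so there is no argument of the authors to compare against; your scaling-and-decomposition argument is the standard one and all the exponents check out. Two small remarks. First, your observation on the first estimate in \eqref{a.12} is right and in fact gives more than is claimed: since $\delta_{P_1,t_1}(x)$ depends on $x$ only through $d(x,P_1)$, the quantity $\int_{\mathbb{S}^3}\delta_{P_1,t_1}^{3-\tau}$ is a constant function of $P_1$ by rotational invariance of the round measure, so its derivative is identically zero (this is consistent with the paper's weaker $O(\tau^{1/2})$). Second, for the $\tau=0$ case of \eqref{a.7} the detour through \eqref{a.9} is unnecessary and, if combined with $\|\partial_{P_1}\delta_{P_1,t_1}\|_{L^3}=O(\tau^{-1/2})$ via H\"older, would only give $O(\tau^{1/2}|\log\tau|)$ rather than the stated $O(\tau^{1/2})$; but your primary route (running the identical two-region splitting with exponent exactly $2$) yields $O(\tau)$ for both versions, so the claimed bounds follow. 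The remaining computations — $\|\nabla\omega_{0,t_1}\|_{L^{3/2}(\mathbb{R}^3)}=O(1)$ by scale invariance, $\|\partial_{P_1}\delta_{P_1,t_1}\|_{L^1(B_{\varepsilon_0/3}(P_1))}=O(t_1^{-1}\log t_1)$, the pointwise bounds $|\partial_{P_1}\delta_{P_1,t_1}|=O(t_1^{-1})$ and $\delta_{P_j,t_j}=O(t_j^{-1})$ away from the respective centres, and the three-region splitting for \eqref{a.6} — are all accurate and in fact give margins (e.g.\ $O(\tau^{3/2}|\log\tau|)$ for \eqref{a.6}) well inside the stated bounds.
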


\begin{lemma}\label{lem4}
In addition to the hypotheses of Lemma \ref{lem3}, we assume that $ K\in C^{1}(\mathbb{S}^3)$. Then
\begin{align}\label{partk-kpj}
\frac{\partial}{\partial t_1}\int_{\mathbb{S}^3}(K(P)-K(P_2))\delta_{P_2, t_2}^{2-\tau}
 \delta_{P_1, t_1}=
O(\tau^2).
\end{align}

\end{lemma}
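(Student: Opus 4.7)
The plan is to bring the $t_1$-derivative inside the integral and split $\mathbb{S}^3$ into three pieces adapted to the bubble concentration. Write
\[
\frac{\partial}{\partial t_1}\int_{\mathbb{S}^3}(K(P)-K(P_2))\delta_{P_2,t_2}^{2-\tau}\delta_{P_1,t_1}\,dP
=\int_{\mathbb{S}^3}(K(P)-K(P_2))\delta_{P_2,t_2}^{2-\tau}\frac{\partial\delta_{P_1,t_1}}{\partial t_1}\,dP,
\]
and set $\Omega_1=\{d(P,P_1)<\varepsilon_0/3\}$, $\Omega_2=\{d(P,P_2)<\varepsilon_0/3\}$, $\Omega_3=\mathbb{S}^3\setminus(\Omega_1\cup\Omega_2)$. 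Because $|P_1-P_2|\geq\varepsilon_0$, these sets are disjoint; on $\Omega_1\cup\Omega_3$ one has $d(P,P_2)\geq\varepsilon_0/3$, and on $\Omega_2\cup\Omega_3$ one has $d(P,P_1)\geq\varepsilon_0/3$. This separation of the two peaks is the structural input that makes the argument possible.

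On $\Omega_1$ the bubble $\delta_{P_2,t_2}$ is evaluated far from its peak, so the explicit form of $\delta_{P,t}$ gives $\delta_{P_2,t_2}(P)\leq C/t_2$, hence $\delta_{P_2,t_2}^{2-\tau}\leq Ct_2^{-(2-\tau)}=O(\tau)$ uniformly on $\Omega_1$. The standard scaling $s=t_1 d(P,P_1)$, of the same kind used throughout the appendix, yields $\int_{\Omega_1}|\partial_{t_1}\delta_{P_1,t_1}|=O(t_1^{-2})=O(\tau)$. Combined with the uniform boundedness of $|K(P)-K(P_2)|$, the contribution of $\Omega_1$ is $O(\tau)\cdot O(\tau)=O(\tau^2)$. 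The region $\Omega_3$ is even easier: both $\delta_{P_2,t_2}^{2-\tau}=O(\tau)$ and $|\partial_{t_1}\delta_{P_1,t_1}|=O(\tau)$ hold pointwise there, and $|\Omega_3|$ is bounded, so the contribution is again $O(\tau^2)$.

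The delicate piece is $\Omega_2$, because there $\delta_{P_2,t_2}^{2-\tau}$ is concentrated and its $L^1$-norm is of order $1$, not $\tau$; the gain needed to reach $O(\tau^2)$ must come from the prefactor. The key point is that, since $K\in C^1(\mathbb{S}^3)$, one has $|K(P)-K(P_2)|\leq C|P-P_2|$, and a scaling computation in the spirit of \eqref{p-p1} gives $\int_{\Omega_2}|P-P_2|\delta_{P_2,t_2}^{2-\tau}\,dP=O(t_2^{-2+\tau})=O(\tau)$. Together with the pointwise bound $|\partial_{t_1}\delta_{P_1,t_1}(P)|\leq C/t_1^2=O(\tau)$ valid on $\Omega_2$ (where $P$ is away from $P_1$), this yields an $\Omega_2$ contribution of $O(\tau^2)$. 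Summing the three pieces gives the asserted estimate. The main obstacle, as just indicated, is precisely to extract the extra factor of $\tau$ on $\Omega_2$ from the vanishing of $K(P)-K(P_2)$ at the peak against the concentration scale $|P-P_2|\sim 1/t_2\sim\tau^{1/2}$.
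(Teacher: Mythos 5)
Your decomposition into $\Omega_1,\Omega_2,\Omega_3$ and the estimates on $\Omega_1$ and $\Omega_3$ are fine, but the key quantitative claim on the piece you yourself identify as delicate is wrong by a logarithm, and that is exactly where the power $\tau^2$ has to be earned. The integral
\[
\int_{\Omega_2}|P-P_2|\,\delta_{P_2,t_2}^{2-\tau}
\]
is \emph{not} $O(t_2^{-2+\tau})=O(\tau)$: after scaling $\rho=t_2 r$ it becomes $c\,t_2^{-2-\tau}\int_0^{c'\varepsilon_0 t_2}\rho^{3}(1+\rho^{2})^{-(2-\tau)}\,d\rho$, and since the integrand behaves like $\rho^{-1+2\tau}$ at infinity this inner integral is of size $\log t_2\sim|\log\tau|$. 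The quantity is therefore comparable to $\tau|\log\tau|$ from below as well as above (the integrand is nonnegative, so no absolute-value argument can do better). This is the same borderline phenomenon that produces the $|\log\tau|$ factors in \eqref{1-tau}, \eqref{a.9}, and \eqref{a.4}. As written, your proof only yields $O(\tau^{2}|\log\tau|)$ on $\Omega_2$, not the stated $O(\tau^{2})$. (That weaker bound would in fact be harmless where the lemma is used in \eqref{cald}, but it does not prove the lemma as stated.)

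To remove the logarithm you must exploit cancellation on $\Omega_2$ instead of taking absolute values. Set $g(P)=\partial_{t_1}\delta_{P_1,t_1}(P)$; since every $P\in\Omega_2$ stays at distance at least $\varepsilon_0/3$ from $P_1$, both $g$ and $\nabla_P g$ are $O(t_1^{-2})=O(\tau)$ there, so $g(P)=g(P_2)+O(\tau\,|P-P_2|)$. The correction term, paired with $|K(P)-K(P_2)|\le C|P-P_2|$, contributes $O(\tau)\int_{\Omega_2}|P-P_2|^{2}\delta_{P_2,t_2}^{2-\tau}=O(\tau)\cdot O(\tau)$, and this last integral carries no logarithm (its rescaled integrand is $\rho^{2\tau}$ at infinity). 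The frozen term $g(P_2)=O(\tau)$ multiplies $\int_{\Omega_2}(K(P)-K(P_2))\,\delta_{P_2,t_2}^{2-\tau}$, in which the first-order Taylor term $\nabla K(P_2)\cdot(P-P_2)$ integrates to zero against the radial weight over the geodesic ball $\Omega_2$, and the Taylor remainder is controlled by $\int_{\Omega_2}|P-P_2|^{2}\delta_{P_2,t_2}^{2-\tau}=O(\tau)$ again. Note that this last step needs a quadratic bound on the Taylor remainder, i.e.\ $K\in C^{1,1}$ or $C^{2}$ (the paper's actual setting); with $K$ merely $C^{1}$, as the lemma is literally phrased, the remainder only gives $o(\tau^{2}|\log\tau|)$.
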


\begin{lemma}
In addition to the hypotheses of Lemma \ref{lem4},
 we assume that $ v\in E_{P_1,t_1}$. Then
\begin{align}\label{a.13}
&\int_{\mathbb{S}^3}
(K(P)-K(P_1))\delta_{P_1,t_1}^{1-\tau}
\frac{\partial \delta_{P_1,t_1}}{\partial P_{1}}v
=O(\tau^{1/2}|\log \tau| \|v\|_{\sigma}).
\end{align}

\end{lemma}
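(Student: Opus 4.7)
The plan is to split $\delta_{P_1,t_1}^{1-\tau} = \delta_{P_1,t_1} + (\delta_{P_1,t_1}^{1-\tau} - \delta_{P_1,t_1})$, producing two integrals $I_A$ (with $\delta$ in place of $\delta^{1-\tau}$) and $I_B$ (the remainder), and to estimate each separately. The hypothesis $v\in E_{P_1,t_1}$ supplies the three orthogonality relations $\int_{\mathbb{S}^3}\delta^2 v = \int_{\mathbb{S}^3}\delta\,\partial_{P_1^{(\ell)}}\delta\,v = \int_{\mathbb{S}^3}\delta\,\partial_{t_1}\delta\,v = 0$, and the scaling $t_1 \sim \tau^{-1/2}$ from the hypotheses of Lemma \ref{lem3} will be used throughout.

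For $I_B = \int_{\mathbb{S}^3}(K(P)-K(P_1))(\delta_{P_1,t_1}^{1-\tau}-\delta_{P_1,t_1})\,\partial_{P_1}\delta_{P_1,t_1}\,v$, I would apply H\"older's inequality with three $L^3(\mathbb{S}^3)$ factors plus the pointwise bound $|K-K(P_1)|\leq C$. Using \eqref{a.9} for $\|\delta_{P_1,t_1}^{1-\tau}-\delta_{P_1,t_1}\|_{L^3(\mathbb{S}^3)} = O(\tau|\log\tau|)$, a direct change of variables $z = t_1 u$ in local stereographic coordinates centered at $P_1$ to obtain $\|\partial_{P_1}\delta_{P_1,t_1}\|_{L^3(\mathbb{S}^3)} = O(t_1)$, and the critical Sobolev embedding $H^{1/2}(\mathbb{S}^3)\hookrightarrow L^3(\mathbb{S}^3)$ giving $\|v\|_{L^3}\leq C\|v\|_\sigma$, I would obtain $|I_B| = O(\tau|\log\tau|\cdot t_1)\|v\|_\sigma = O(\tau^{1/2}|\log\tau|\|v\|_\sigma)$.

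For $I_A = \int_{\mathbb{S}^3}(K(P)-K(P_1))\delta_{P_1,t_1}\,\partial_{P_1}\delta_{P_1,t_1}\,v$, I would Taylor-expand $K(P)-K(P_1) = \nabla_{g_0}K(P_1)\cdot(P-P_1) + R(P)$ with $|R(P)|\leq C|P-P_1|^2$. The $R$-piece yields $O(\||P-P_1|^2\delta\,\partial_{P_1}\delta\|_{L^{3/2}})\|v\|_\sigma = O(\tau^{1/2}\|v\|_\sigma)$ by a scaling argument in local stereographic coordinates. For the linear piece $\nabla_{g_0}K(P_1)\cdot\int(P-P_1)\delta\,\partial_{P_1}\delta\,v$, the key identity $\sum_m (P-P_1)^{(m)}\delta\,\partial_{P_1^{(m)}}\delta = \delta^2 - t_1\delta\,\partial_{t_1}\delta$ (which can be checked by direct computation, using the conformal-scaling relation $u\cdot\nabla\omega - t\,\partial_t\omega = -\omega$ satisfied by the Aubin--Talenti profile and transferred to the sphere) implies that the trace of the tensor $(P-P_1)^{(m)}\delta\,\partial_{P_1^{(\ell)}}\delta$ integrates to zero against $v$ by the orthogonalities. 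The remaining traceless portion is then bounded by $|\nabla_{g_0}K(P_1)|\cdot\||P-P_1|\delta\,\partial_{P_1}\delta\|_{L^{3/2}}\|v\|_\sigma$; in the setting where the lemma is invoked, namely $(\alpha,t,P,v)\in \Sigma_\tau(\overline{P}_1,\ldots,\overline{P}_k)$ with $|P_1 - \overline{P}_1| < \tau^{1/2}|\log\tau|$ for a critical point $\overline{P}_1 \in \mathscr{K}^-$ of $K$, the $C^2$-regularity of $K$ forces $|\nabla_{g_0}K(P_1)| = O(\tau^{1/2}|\log\tau|)$, while the scaling estimate gives $\||P-P_1|\delta\,\partial_{P_1}\delta\|_{L^{3/2}} = O(1)$, so the product is $O(\tau^{1/2}|\log\tau|\|v\|_\sigma)$, closing the estimate. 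The main obstacle I foresee is precisely the traceless/quadrupole residue of the linear term, whose $L^{3/2}$-norm is only $O(1)$ and cannot be reduced by the five orthogonality conditions of $E_{P_1,t_1}$ (which annihilate only scalar and dipole modes at the concentration scale of $\delta$); the sharp $\tau^{1/2}|\log\tau|$ factor is recovered through the implicit proximity of $P_1$ to a critical point of $K$ built into the context $\Sigma_\tau$.
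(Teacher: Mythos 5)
Your proof is correct, and it matches the argument the paper evidently intends: the appendix gives no proof of this lemma, but the parallel estimate in the proof of Proposition \ref{lem6} (where the factor $\tau^{1/2}|\log\tau|$ is pulled out in front of $\int|P-P_i|\delta_{P_i,t_i}\,\partial_{t_i}\delta_{P_i,t_i}\,v$) shows the authors likewise rely on $|\nabla_{g_0}K(P_1)|=O(\tau^{1/2}|\log\tau|)$ coming from $|P_1-\overline P_1|<\tau^{1/2}|\log\tau|$ with $\overline P_1$ a critical point, a hypothesis that is part of $\Sigma_\tau$ but is omitted from the lemma's formal statement. You are right that this extra input is indispensable: the five orthogonality relations of $E_{P_1,t_1}$ only annihilate the trace of the quadrupole tensor $t_1\int y^{(m)}y^{(\ell)}\omega^3 v$, the traceless part is genuinely of size $O(\|v\|_\sigma)$, and so without the smallness of $\nabla_{g_0}K(P_1)$ the claimed bound fails. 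One small remark: since your final bound on the traceless residue is the same crude H\"older bound $|\nabla_{g_0}K(P_1)|\cdot\bigl\||P-P_1|\delta_{P_1,t_1}\partial_{P_1}\delta_{P_1,t_1}\bigr\|_{L^{3/2}}\|v\|_\sigma$, the trace-subtraction step can be skipped entirely --- the whole linear term is already controlled once $|\nabla_{g_0}K(P_1)|=O(\tau^{1/2}|\log\tau|)$ is in hand --- though your identity $\sum_m(P-P_1)^{(m)}\delta\,\partial_{P_1^{(m)}}\delta=\delta^2-t_1\delta\,\partial_{t_1}\delta$ does check out and correctly explains why the orthogonality conditions cannot by themselves rescue the general case.
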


\bibliographystyle{plain}

\def\cprime{$'$}

\bigskip

\noindent Y. Li,\,\,Z. Tang,\,\,N. Zhou

\medskip

\noindent Email: \textsf{yanli@mail.bnu.edu.cn}\\
Email: \textsf{tangzw@bnu.edu.cn}\\
Email: \textsf{nzhou@mail.bnu.edu.cn}

\end{document}